\title{Parabolic, hyperbolic and elliptic Poincar\'e series}
\author{\"Ozlem Imamo$\bar{\text g}$lu, Cormac O'Sullivan}
\date{}
\begin{document}

\maketitle

\def\s#1#2{\langle \,#1 , #2 \,\rangle}

\def\H{{\Bbb H}}
\def\F{{\frak F}}
\def\C{{\Bbb C}}
\def\R{{\Bbb R}}
\def\Z{{\Bbb Z}}
\def\Q{{\Bbb Q}}
\def\N{{\Bbb N}}
\def\st{{\Bbb S}}
\def\D{{\Bbb D}}
\def\fun{{\Bbb F}}
\def\B{{\Bbb B}}
\def\G{{\Gamma}}
\def\GH{{\G \backslash \H}}
\def\g{{\gamma}}
\def\L{{\Lambda}}
\def\ee{{\varepsilon}}
\def\K{{\mathcal K}}
\def\Re{\text{\rm Re}}
\def\Im{\text{\rm Im}}
\def\SL{\text{\rm SL}}
\def\GL{\text{\rm GL}}
\def\PSL{\text{\rm PSL}}
\def\sgn{\text{\rm sgn}}
\def\tr{\text{\rm tr}}

\def\ca{{\frak a}}
\def\cb{{\frak b}}
\def\cc{{\frak c}}
\def\cd{{\frak d}}
\def\ci{{\infty}}

\def\sa{{\sigma_\frak a}}
\def\sb{{\sigma_\frak b}}
\def\sc{{\sigma_\frak c}}
\def\sd{{\sigma_\frak d}}
\def\si{{\sigma_\infty}}

\def\se{{\sigma_\eta}}
\def\sz{{\sigma_{z_0}}}


\newtheorem{theorem}{Theorem}
\newtheorem{lemma}[theorem]{Lemma}
\newtheorem{prop}[theorem]{Proposition}
\newtheorem{cor}[theorem]{Corollary}
\newtheorem{conj}[theorem]{Conjecture}

\newcounter{coundef}
\newtheorem{adef}[coundef]{Definition}

\newcounter{thm1count}
\newtheorem{thm1}[thm1count]{Theorem}

\renewcommand{\labelenumi}{(\roman{enumi})}

\numberwithin{equation}{section}

\bibliographystyle{plain}

\begin{abstract}\noindent
Following Petersson, we study the parabolic, hyperbolic and elliptic expansions of
holomorphic cusp forms and the associated Poincar\'e series. We show how these ideas
extend to the space of second-order cusp forms.
\end{abstract}

\section{Introduction}
 \label{intro1}
Let $\G \subseteq \PSL_2(\R)$ be a Fuchsian
group of the first kind acting on the upper half plane $\H$. We write
$x+iy=z\in \H$ and set $d\mu z$ to be
the $\SL_2(\R)$-invariant hyperbolic volume form $dx dy/y^2$. Assume the volume of the
quotient space
$\G\backslash\H$ is equal to $V < \infty$.  Let $S_{k}(\G)$ be the space of holomorphic
weight $k$
cusp forms for $\G$. This is the vector space of holomorphic
functions $f$ on $\H$ which decay rapidly in each cusp  of
$\G$ and satisfy the transformation property
\begin{equation}\label{trans}
\frac{f(\g z)}{j(\g,z)^{k}} -f(z)=0 \text{ \ \ for all \ \ } \g\in \G,
\end{equation}
with $j(\left(\smallmatrix a
& b \\ c & d \endsmallmatrix\right),z)=cz+d$  for
$ \g=(\smallmatrix a & b \\ c & d \endsmallmatrix)$.  We do not assume that $\G$ has
cusps. If there are none then we may ignore the rapid decay condition for $S_k$. We assume
throughout that the multiplier system is trivial and that, unless otherwise stated, $4
\leqslant k\in 2\Z$.
Using the notation $(f|_k \g)(z):=f(\g z)/j(\g,z)^k$, extended to all $\C[\PSL_2(\R)]$ or
$\C[\SL_2(\R)]$ by linearity, (\ref{trans}) can be written more simply  as $f|_k(\g
-1)=0$.

\vskip 3mm
The  automorphy property (\ref{trans}) may be modified to obtain different families of
objects: for example mixed cusp forms \cite{Lee} or vector-valued cusp forms \cite{KM2}.
In this article we generalize  (\ref{trans}) to define higher-order automorphic forms. These
have arisen independently in different contexts, see for example \cite{FW, Go2, KZ}. They
are also an interesting special case of the vector-valued cusp forms of Knopp and Mason
\cite{KM2, Mas} corresponding to unipotent representations of $\G$. See \cite[Theorem
2.1]{JO'S3} for an explicit example of this connection in the context of higher-order
non-holomorphic Eisenstein series. The first historical appearance of second-order forms appears to be in a paper by Eichler \cite[Section 4]{Ei} where he defines the second-order parabolic Poincar\'e series in connection with  his work on automorphic integrals and their periods, see section \ref{eich}.

\vskip 3mm
Let $\N$ be the natural numbers $\{1,2,3, \dots\}$   and $\N_0 = \N \cup \{0 \}$.

\begin{adef} For $n \in \N_0$, the $\C$-vector space $S^n_k(\G)$ is defined recursively as
follows.
 Let $S_k^0(\G)$ consist only of the  function $\H \to 0$. For $n \geqslant 1$, let
$S_k^n(\G)$ contain all holomorphic functions $f:\H \to \C$  that satisfy
\begin{equation}\label{trans2}
f|_k(\g -1) \in S^{n-1}_k(\G) \text{ \ \ for all \ \
} \g \in \G.
\end{equation}
For all parabolic elements $\pi$ of $\G$ we also require
\begin{equation}\label{trans3}
f|_k(\pi -1) =0.
\end{equation}
Finally $f$ must  decay rapidly in each cusp (this is explained in section
\ref{decayexpl}).
\end{adef}

We call $S^n_k(\G)$ the space of  order $n$, weight $k$ holomorphic cusp forms for $\G$.
 Usually we  just write $S^n_k$. Clearly $S^1_k$ and $S_k$ are synonymous. Induction, as
in \cite[Lemma 3.1]{JO'S3}, shows $S^{n_1}_k \subseteq S^{n_2}_k$ for any two integers $0
\leqslant n_1  \leqslant n_2$. Therefore $S_k \subseteq S^n_k$ and higher-order forms are
a generalization of the usual cusp forms.

\vskip 3mm
The identity in $\G$ is $I=\pm\left(\smallmatrix 1 & 0 \\ 0 & 1
\endsmallmatrix\right)$. The remaining elements may be partitioned into three sets:  the
parabolic, hyperbolic and elliptic elements. These correspond to translations, dilations
and rotations, respectively, in $\H$. As is well-known, the relation (\ref{trans}) for
parabolic elements leads to a Fourier expansion of $f$ associated to each cusp of $\G$.
The parabolic Fourier coefficients that arise often contain a great deal of
number-theoretic information. A family of corresponding parabolic Poincar\'e series can be
constructed whose inner products with $f$  produce these  Fourier coefficients.

\vskip 3mm
Much less well-known are Petersson's  hyperbolic and elliptic Fourier expansions,
introduced in \cite{peter}. In the first half of this paper we start by giving an exposition of Petersson's work. For the benefit of the reader we develop all three expansions
and their associated Poincar\'e series  in  sections \ref{parsec}, \ref{hypsec} and
\ref{ellsec}. We give a unified treatment   with notation that emphasizes the similarities
of the three cases.

\vskip 3mm
The series we construct in sections \ref{parsec}, \ref{hypsec} and \ref{ellsec} are all
examples of
{\it relative Poincar\'e series}.

\begin{theorem} \label{relpoin}
Let $\G_0$ be a subgroup of $\G$ and  $\phi$  a holomorphic function on $\H$ satisfying
$\phi|_k \g = \phi$ for all $\g$ in $\G_0$ and
\begin{equation}\label{check}
\int_{\G_0 \backslash \H} |\phi(z)|y^{k/2} \, d\mu z < \infty.
\end{equation}
Then the relative Poincar\'e series
\begin{equation}\label{relpoincare}
P[\phi](z):= \sum_{\g \in \G_0 \backslash \G} (\phi|_k \g)(z)
\end{equation}
converges absolutely and uniformly on compact subsets of $\H$ to an element of $S_k$.
\end{theorem}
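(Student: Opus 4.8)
The plan is to push everything through the single non-negative, $\G_0$-invariant function
\[
F(z):=|\phi(z)|\,y^{k/2},
\]
to control its $\G_0\backslash\G$-average by an $L^1$ estimate coming from unfolding \eqref{check}, and then to read off holomorphy, automorphy and cuspidality in turn. First I would record two elementary points. Since $f\mapsto f|_k g$ is a right action and $\phi|_k\tau=\phi$ for $\tau\in\G_0$, the term $(\phi|_k\g)(z)$ depends only on the coset $\G_0\g$, so \eqref{relpoincare} is unambiguous once convergence is known. Using $\Im(\g z)=y/|j(\g,z)|^2$ one has the pointwise identity $|(\phi|_k\g)(z)|\,y^{k/2}=F(\g z)$; in particular $F$ is $\G_0$-invariant, and the absolute convergence of $P[\phi](z)$ is precisely the finiteness of the $\G$-invariant sum $\Phi(z):=\sum_{\g\in\G_0\backslash\G}F(\g z)$.

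Next I would unfold. Since $F\geqslant 0$, Tonelli permits term-by-term integration: for a fundamental domain $\F$ of $\G$ and a set of coset representatives, the translates $\g\F$ tile a fundamental domain of $\G_0$, so
\[
\int_{\G\backslash\H}\Phi(z)\,d\mu z=\int_{\G_0\backslash\H}F(z)\,d\mu z=\int_{\G_0\backslash\H}|\phi(z)|\,y^{k/2}\,d\mu z<\infty
\]
by \eqref{check}. Thus $\Phi\in L^1(\G\backslash\H)$, and being $\G$-invariant it is locally integrable on $\H$. Now, as $\phi$ is holomorphic, $|\phi|$ is subharmonic; comparing the Euclidean sub-mean-value inequality on the disc $\{|w-w_0|<(\Im w_0)/2\}$ with the hyperbolic measure gives $F(w_0)\leqslant C_k\int_{B_\delta(w_0)}F\,d\mu$ with a fixed hyperbolic radius $\delta$ and constant $C_k$, uniformly in $w_0\in\H$. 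For $z$ in a compact $K$ one then has $F(\g z)\leqslant C_k\int_{\g K^\delta}F\,d\mu$, where $K^\delta$ is the $\delta$-neighbourhood of $K$; summing over cosets and using $\sum_{\g\in\G_0\backslash\G}\int_{\g K^\delta}F\,d\mu=\int_{K^\delta}\Phi\,d\mu<\infty$ shows that the tails of $\sum_{\g\in\G_0\backslash\G}F(\g z)$ are uniformly small on $K$. Hence \eqref{relpoincare} converges absolutely and uniformly on compact subsets of $\H$, so $P[\phi]$ is holomorphic; and $P[\phi]|_k\delta=P[\phi]$ for every $\delta\in\G$ follows from absolute convergence together with the fact that $\G_0\g\mapsto\G_0\g\delta$ permutes $\G_0\backslash\G$.

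Finally I would check that $P[\phi]\in S_k$, i.e.\ that it decays rapidly in the cusps. From the display above, $\int_{\G\backslash\H}|P[\phi](z)|\,y^{k/2}\,d\mu z\leqslant\int_{\G\backslash\H}\Phi\,d\mu z<\infty$. If $\G$ has no cusps there is nothing to prove, since then $\G\backslash\H$ is compact. Otherwise fix a cusp $\mathfrak a$ with scaling matrix $\sigma$, so $\sigma^{-1}\G_{\mathfrak a}\sigma=\langle z\mapsto z+h\rangle$, and write the resulting $h$-periodic holomorphic function $\psi:=P[\phi]|_k\sigma=\sum_{n\in\Z}a_n e^{2\pi i nz/h}$. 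Applying $\sigma$ and restricting the finite integral to a neighbourhood $\{\Im z>Y\}$ of $\mathfrak a$ reduces matters to the finiteness of $\int_Y^\infty\big(\int_0^h|\psi(x+iy)|\,dx\big)\,y^{k/2-2}\,dy$; since $\int_0^h|\psi(x+iy)|\,dx\geqslant h|a_n|\,e^{-2\pi ny/h}$ for every $n$, a nonzero $a_n$ with $n<0$ forces exponential divergence, while a nonzero $a_0$ forces divergence of $\int^\infty y^{k/2-2}\,dy$, which diverges because $k\geqslant 4$. Hence $a_n=0$ for all $n\leqslant 0$ at every cusp, so $P[\phi]$ decays rapidly there and lies in $S_k$.

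In my view the delicate step is this last one: converting the clean $L^1$-bound that \eqref{check} hands us into genuine cuspidality, which requires ruling out an essential singularity at each cusp as well as killing the constant term, and is exactly where the standing hypothesis $k\geqslant 4$ (in fact $k>2$ would suffice here) is used. The convergence and holomorphy are by contrast the routine subharmonicity-plus-local-integrability argument, and could instead be quoted from the general theory of relative Poincar\'e series; if the discussion in section~\ref{decayexpl} already records the principle that a holomorphic automorphic form $f$ with $\int_{\G\backslash\H}|f|\,y^{k/2}\,d\mu z<\infty$ is cuspidal, one would simply invoke it at the end.
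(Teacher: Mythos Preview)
Your argument is correct, but it follows a genuinely different line from the paper's. Both proofs start from the same sub-mean-value estimate $F(w_0)\leqslant C\int_{B}F\,d\mu$ for $F(z)=|\phi(z)|y^{k/2}$, but they exploit it differently. The paper chooses $r$ so small that the hyperbolic balls $\B(\g z,r)$ are pairwise disjoint as $\g$ runs through $\G$; summing the mean-value inequality over $\G_0\backslash\G$ then gives the \emph{global pointwise} bound $y^{k/2}|P[\phi](z)|\leqslant C_{r,k}^{-1}\int_{\G_0\backslash\H}|\phi|\,\Im^{k/2}\,d\mu$ for every $z\in\H$, and cuspidality follows by citing the standard fact that a holomorphic automorphic form with $y^{k/2}|f(z)|$ bounded is a cusp form. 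You instead unfold first to get $\Phi:=\sum_{\g\in\G_0\backslash\G}F(\g\cdot)\in L^1(\G\backslash\H)$, deduce local integrability of $\Phi$, and combine this with the mean-value estimate over the fixed set $K^\delta$ to obtain only \emph{locally} uniform convergence; for cuspidality you then argue directly from the resulting $L^1$ bound $\int_{\G\backslash\H}|P[\phi]|\,y^{k/2}\,d\mu<\infty$, showing by a Fourier-coefficient estimate that any nonzero $a_n$ with $n\leqslant 0$ would force divergence of the cuspidal integral (and this is exactly where you need $k>2$).

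Your route is arguably more self-contained, since it avoids the outside citation for cuspidality and makes the role of the weight hypothesis explicit. The paper's route, on the other hand, yields the stronger intermediate conclusion that $y^{k/2}|P[\phi](z)|$ is uniformly bounded on all of $\H$; this is not incidental, as exactly that bound is invoked again in the proof of Theorem~\ref{relpoin2} (see the estimate leading to (\ref{cross})). So while your proof of Theorem~\ref{relpoin} stands on its own, if you were rewriting the whole paper you would want to recover the $L^\infty$ bound separately before reaching the second-order construction.
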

This theorem is stated in \cite{Ka} and proved in \cite[Chapter 1, \S 7]{Krushkal}, for
example. Variations of it also appear explicitly and implicitly in  many other works. The
proof, given in section \ref{se}, simply involves showing that the series
(\ref{relpoincare}) is  bounded by the integral (\ref{check}). 

\vskip 3mm
 In  the second half of the paper we show how these ideas extend naturally to
the second-order space $S^2_k$. In   section \ref{sof} we  prove the analog of Theorem \ref{relpoin}. Let $\operatorname{Hom}_0(\G, \C)$ be the homomorphisms from $\G$ to $\C$ that are $0$ on the parabolic elements of $\G$. The following relative  Poincar\'e series, twisted by such a homomorphism, are second-order forms.   

\begin{theorem} \label{relpoin2}
Let $\G_0$ be a subgroup of $\G$ and  $\phi$  a holomorphic function on $\H$ satisfying
$\phi|_k \g = \phi$ for all $\g$ in $\G_0$. Let $L \in \operatorname{Hom}_0(\G, \C)$ with
$L(\g)=0$ for all $\g$ in $\G_0$. If
\begin{equation*}
\int_{\G_0 \backslash \H} \Big(1+|\Lambda_L^+(z)|+ |\Lambda_L^-(z)|\Big)|\phi(z)|y^{k/2}
\, d\mu z < \infty
\end{equation*}
then
$$
P[\phi,L](z):= \sum_{\g \in \G_0 \backslash \G} L(\g)(\phi|_k \g)(z)
$$
converges absolutely and uniformly on compact subsets of $\H$ to an element of $S^2_k$.
\end{theorem}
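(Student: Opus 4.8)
The plan is to mimic the proof of Theorem~\ref{relpoin}, the only genuinely new ingredient being the unbounded multiplier $L(\g)$. Observe first that $L(\g)$ and $(\phi|_k\g)(z)$ each depend only on the coset $\G_0\g$ — since $L$ vanishes on $\G_0$ and $\phi|_k\g_0=\phi$ for $\g_0\in\G_0$ — so $P[\phi,L]$ is well defined as soon as it converges. Three things then need checking: (a) absolute convergence, uniform on compact subsets of $\H$, to a holomorphic function; (b) $P[\phi,L]|_k(\g-1)\in S_k$ for every $\g\in\G$ and $P[\phi,L]|_k(\pi-1)=0$ for every parabolic $\pi$, which are exactly (\ref{trans2}) with $n=2$ and (\ref{trans3}); and (c) rapid decay in every cusp. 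Step (a) is the analytic core; step (b) is a formal manipulation; step (c) comes out of the estimate in (a).

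For (a), the obstruction is that $|L(\g)|$ grows with $\g$ and so cannot be pulled outside the sum. It is traded for values of the functions $\Lambda^{\pm}_L$ of section~\ref{sof}: because $L(\g)$ equals the ($z$-independent) jump $\bigl(\Lambda^{+}_L(\g z)-\Lambda^{+}_L(z)\bigr)+\bigl(\Lambda^{-}_L(\g z)-\Lambda^{-}_L(z)\bigr)$, one has, uniformly in $z\in\H$,
\[
|L(\g)|\ \leqslant\ |\Lambda^+_L(\g z)|+|\Lambda^-_L(\g z)|+|\Lambda^+_L(z)|+|\Lambda^-_L(z)| .
\]
Since $|(\phi|_k\g)(z)|=|\phi(\g z)|(\Im\g z)^{k/2}y^{-k/2}$, multiplying by this and summing over $\g\in\G_0\backslash\G$ bounds $y^{k/2}|P[\phi,L](z)|$ by $\bigl(1+|\Lambda^+_L(z)|+|\Lambda^-_L(z)|\bigr)$ times orbit sums of the $\G_0$-invariant functions $w\mapsto|\phi(w)|(\Im w)^{k/2}$ and $w\mapsto|\phi(w)\Lambda^{\pm}_L(w)|(\Im w)^{k/2}$. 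Up to replacing $\Lambda^-_L$ by its complex conjugate, each of these is $|g(w)|(\Im w)^{k/2}$ for a holomorphic weight $k$ form $g$ on $\G_0$, so the orbit-sum-to-integral estimate of section~\ref{se}, applied to partial sums, bounds each orbit sum by a uniform constant times the corresponding integral over $\G_0\backslash\H$. Altogether
\[
y^{k/2}\,|P[\phi,L](z)|\ \leqslant\ C\bigl(1+|\Lambda^+_L(z)|+|\Lambda^-_L(z)|\bigr)\int_{\G_0\backslash\H}\bigl(1+|\Lambda^+_L|+|\Lambda^-_L|\bigr)|\phi|\,y^{k/2}\,d\mu ,
\]
which is finite by hypothesis; so $P[\phi,L]$ converges absolutely and uniformly on compacta, hence is holomorphic.

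For (b), reindex the absolutely convergent series by $\g\mapsto\g\delta$ and use that $L$ is a homomorphism to get $P[\phi,L]|_k\delta=P[\phi,L]-L(\delta)\,P[\phi]$ for every $\delta\in\G$, where $P[\phi]$ is the series (\ref{relpoincare}); our hypothesis trivially implies (\ref{check}), so $P[\phi]\in S_k$ by Theorem~\ref{relpoin}. Hence $P[\phi,L]|_k(\delta-1)=-L(\delta)\,P[\phi]\in S_k=S^1_k$, and for parabolic $\pi$ we have $L(\pi)=0$ because $L\in\operatorname{Hom}_0(\G,\C)$, so $P[\phi,L]|_k(\pi-1)=0$. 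For (c), this parabolic vanishing makes $P[\phi,L]|_k\sa$ periodic, so it has an ordinary Fourier expansion at each cusp $\ca$; the bound from (a) gives $|(P[\phi,L]|_k\sa)(z)|\ll\bigl(1+|\Lambda^+_L(\sa z)|+|\Lambda^-_L(\sa z)|\bigr)y^{-k/2}$, and since $\Lambda^{\pm}_L$ has at most moderate growth toward the cusps — indeed is bounded there when built from a weight $2$ cusp form — the Fourier coefficients of non-positive index vanish and the expansion decays rapidly. Therefore $P[\phi,L]\in S^2_k$.

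The main obstacle is step (a): the multipliers $L(\g)$ are unbounded, so the bare estimate behind Theorem~\ref{relpoin} no longer applies, and the only way to rescue it is to convert $L(\g)$ into values of $\Lambda^{\pm}_L$ at $\g z$ so that the multipliers merge with $\phi$ into a $\G_0$-invariant integrand handled by the argument of section~\ref{se} — which is exactly why the functions $\Lambda^{\pm}_L$, and the enlarged integrability condition, appear in the statement. A further point demanding care is to keep the constant in that estimate uniform enough, past the elliptic fixed points and into the cusps, that the decay conclusion of (c) is not lost.
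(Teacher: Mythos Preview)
Your argument is essentially the paper's: rewrite $L(\g)$ via $\Lambda_L^\pm$, feed the resulting pieces through the orbit-sum-to-integral estimate of section~\ref{se}, obtain $y^{k/2}|P[\phi,L](z)|\ll 1+|\Lambda_L^+(z)|+|\Lambda_L^-(z)|$, then verify the second-order transformation law $P[\phi,L]|_k(\delta-1)=-L(\delta)P[\phi]$ and extract rapid decay from the Fourier expansion at each cusp. The paper organizes the split as $P[\phi,L]=P^+[\phi,L]+P^-[\phi,L]-\Lambda_L^+\,P[\phi]-\overline{\Lambda_L^-}\,P[\phi]$ and invokes Theorem~\ref{relpoin} ``with $\phi(z)$ replaced by $\Lambda_L^+(z)\phi(z)$'', which is your orbit-sum step in different packaging.

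One assertion in your step~(a) is not correct as written and deserves a flag: you say each of $\phi\Lambda_L^{\pm}$ is ``a holomorphic weight~$k$ form $g$ on $\G_0$''. The automorphy $(\phi\Lambda_L^{\pm})|_k\g_0=\phi\Lambda_L^{\pm}$ requires $\Lambda_L^{\pm}(\g_0 z)=\Lambda_L^{\pm}(z)$, i.e.\ the \emph{separate} periods $\int_z^{\g_0 z}f^{\pm}\,dw$ must vanish for every $\g_0\in\G_0$; the hypothesis $L|_{\G_0}=0$ only forces the combination $\int f^{+}\,dw+\overline{\int f^{-}\,dw}$ to vanish. When $\G_0$ is parabolic (weight-$2$ cusp forms have zero parabolic periods) or elliptic (any homomorphism into $\C$ kills torsion) the pieces do vanish separately, but for hyperbolic $\G_0$ they need not. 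Without that invariance the final step of the section~\ref{se} argument --- bounding the disjoint union of balls by the integral over $\G_0\backslash\H$ --- does not go through as stated, since the integrand fails to descend. The paper's proof makes exactly the same move when it applies Theorem~\ref{relpoin} to $\Lambda_L^+\phi$, so this is a shared lacuna rather than a divergence from the paper; but since you state the $\G_0$-automorphy explicitly, it is worth knowing precisely where it can fail.
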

The functions $\Lambda_L^+(z)$ and $\Lambda_L^-(z)$ above satisfy
$$
L(\g)=\Lambda_L^+(\g z) - \Lambda_L^+(z) + \overline{\Lambda_L^-(\g z) - \Lambda_L^-(z)}
$$
for all $\g \in \G$ and all $z \in \C$. See (\ref{lamb2}) for their definition.
 
\vskip 3mm
Theorem \ref{relpoin2} allows us to construct parabolic, hyperbolic and elliptic second-order Poincar\'e series.
In section \ref{spn} we show that, whenever they exist,
these Poincar\'e series of order 1 and 2 always span their respective cusp form spaces. In
the final section  we speculate on the situation for third and higher-order forms.

\section{Parabolic expansions} \label{parsec}

\subsection{} \label{decayexpl}
All the properties of Fuchsian groups used in this paper are explained in \cite{Katok},
\cite[Chapter 1]{Sh} and \cite[Chapter 2]{Iw2}.
 We say that $\g$ is a parabolic element of $\G$ if its trace,  $\tr(\g)$, has absolute
value 2. Then $\g$  fixes one point in $\R \cup \{\infty\}$. These parabolic fixed points
are the cusps of $\G$. If $\ca$ is a cusp of $\G$ then the subgroup, $\G_\ca$, of all
elements in $\G$ that fix $\ca$ is isomorphic to $\Z$. Thus $\G_\ca=\langle \g_\ca
\rangle$ for a parabolic generator $\g_\ca \in \G$.  There exists a scaling matrix $\sa
\in \SL_2(\R)$ so that $\sa \infty= \ca$ and
\begin{equation}
  \sa^{-1} \G_\ca \sa =  \left\{ \left. \pm \begin{pmatrix} 1 & m \\ 0 & 1
\end{pmatrix}
\; \right| \; \ m\in {\Z}\right\}. \label{ginf}
\end{equation}
The matrix $\sa$ is unique up to multiplication on the right by any $\left(\smallmatrix 1
& x \\ 0 & 1
\endsmallmatrix\right)$ with $x \in \R$. We label the group (\ref{ginf}) as $\G_\ci$. A
natural fundamental domain for $\G_\ci \backslash \H$ is the set $\fun_\infty$ of all $z \in \H$ with $0
\leqslant \Re(z) < 1$. The image of this set under $\sa$ will be a fundamental domain
for $\G_\ca \backslash \H$ as shown in Figure \ref{pfig}.



\SpecialCoor
\psset{griddots=5,subgriddiv=0,gridlabels=0pt}

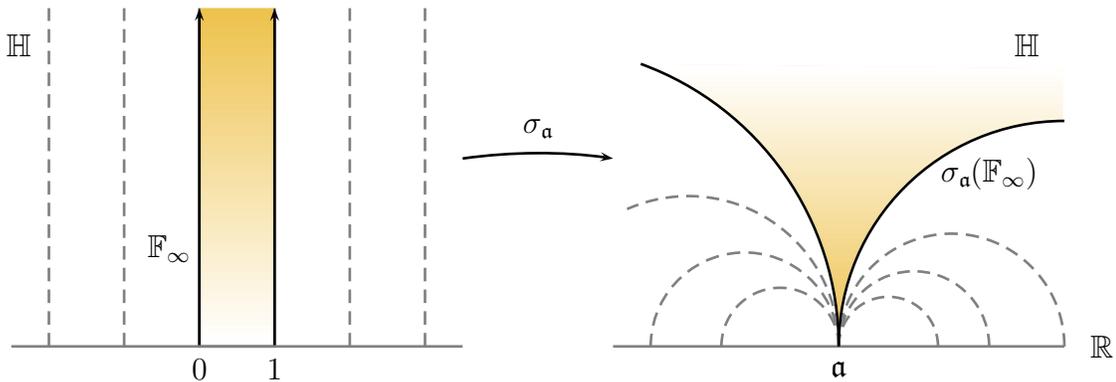
\begin{figure}[h]
\begin{center}
\begin{pspicture}(-0.8,-0.5)(14.8,5) 

\psset{linewidth=1pt}

\pscustom[fillstyle=gradient,linecolor=gray,gradmidpoint=1,gradbegin=byellow,gradend=white,gradlines=50]{%
  \psline[linewidth=1pt](2.5,4.5)(2.5,0)(3.5,0)(3.5,4.5)}

  \psline[linestyle=dashed,linecolor=gray](0.5,0)(0.5,4.5)
  \psline[linestyle=dashed,linecolor=gray](1.5,0)(1.5,4.5)
  \psline[linestyle=dashed,linecolor=gray](4.5,0)(4.5,4.5)
  \psline[linestyle=dashed,linecolor=gray](5.5,0)(5.5,4.5)

  \psline[linewidth=1pt]{<-}(2.5,4.5)(2.5,0)
  \psline[linewidth=1pt]{->}(3.5,0)(3.5,4.5)
  \psline[linewidth=1pt,linecolor=gray](0,0)(6,0)
  \psline[linewidth=1pt,linecolor=gray](8,0)(14,0)

  \psset{shortput=nab}
  \pnode(6,2.5){aa}
  \pnode(8,2.5){bb}
  \ncarc{->}{aa}{bb}^{$\sa$}

  \rput(2.5,-0.3){$0$}
  \rput(3.5,-0.3){$1$}
  \rput(0.1,4.0){$\H$}
  \rput(2.1,1.3){$\fun_\infty$}
  \rput(13.5,4.0){$\H$}
  \rput(13,2.3){$\sa(\fun_\infty)$}
  \rput(14.5,0){$\R$}

  \psarc[linestyle=dashed,linecolor=gray](9,0){2}{0}{114}
  \psarc[linestyle=dashed,linecolor=gray](12.5,0){1.5}{0}{180}
  \psarc[linestyle=dashed,linecolor=gray](12,0){1}{0}{180}
  \psarc[linestyle=dashed,linecolor=gray](9.75,0){1.25}{0}{180}
  \psarc[linestyle=dashed,linecolor=gray](10.22,0){0.78}{0}{180}
  \psarc[linestyle=dashed,linecolor=gray](11.66,0){0.66}{0}{180}

  \pscustom[fillstyle=gradient,linecolor=white,gradmidpoint=1,gradbegin=white,gradend=byellow,gradlines=50,gradangle=0]{%
  \psarcn[linewidth=1pt](7,0){4}{70}{0}
  \psarcn[linewidth=1pt](14,0){3}{180}{90}
  \psline[linewidth=1pt](14,3)(14,3.7)}

  \psarcn[linewidth=1pt](7,0){4}{70}{0}
  \psarcn[linewidth=1pt](14,0){3}{180}{90}


  \rput(11,-0.3){$\ca$}

\end{pspicture}
\caption{The parabolic scaling map \label{pfig}}
\end{center}
\end{figure}


We next define an operator $A$ that converts functions with a particular parabolic
invariance into functions with invariance as $z \to z+1$. Similar, though slightly more
elaborate, operators will do the same for functions with  hyperbolic and elliptic
invariance in sections \ref{hypsec}, \ref{ellsec}.

\begin{lemma} \label{parperiodic}
For any function $f$ with $f|_k \g_\ca = f$, define
$$
A_{\ca}f:=\bigl(f|_k \sa\bigr).
$$
Then $\left(A_{\ca}f \right)(z+1)=\left( A_{\ca}f\right)(z)$.
\end{lemma}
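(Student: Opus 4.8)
The plan is to prove Lemma \ref{parperiodic} by a direct computation using the cocycle property of $j$ and the defining relation \eqref{ginf} for the scaling matrix. First I would recall that for any $g, h \in \SL_2(\R)$ one has the cocycle identity $j(gh, z) = j(g, hz) j(h, z)$, which implies that the slash operator is a right action: $(f|_k g)|_k h = f|_k(gh)$. Applying this to $f|_k \sa$, I want to compare $(A_\ca f)(z+1)$ with $(A_\ca f)(z)$. Writing $z + 1 = \tau z$ where $\tau = \left(\smallmatrix 1 & 1 \\ 0 & 1\endsmallmatrix\right)$, I have $(A_\ca f)(\tau z) = (f|_k\sa)(\tau z)$, and since $j(\tau, z) = 1$ this equals $\bigl((f|_k \sa)|_k \tau\bigr)(z) = (f|_k(\sa\tau))(z)$.

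Next I would use \eqref{ginf}: since $\tau \in \G_\ci = \sa^{-1}\G_\ca\sa$, there is a parabolic element $p \in \G_\ca$ with $\sa^{-1} p \sa = \tau$, i.e.\ $\sa \tau = p \sa$. In fact $p$ is $\g_\ca$ or $\g_\ca^{-1}$ depending on the orientation chosen for $\sa$, but all that matters is $p \in \G_\ca = \langle \g_\ca\rangle$. Then $f|_k(\sa\tau) = f|_k(p\sa) = (f|_k p)|_k \sa$. By hypothesis $f|_k \g_\ca = f$, and since $\langle \g_\ca\rangle$ is generated by $\g_\ca$ this gives $f|_k p = f$ for every $p \in \G_\ca$ (the slash action being a homomorphism from $\G_\ca$). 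Hence $f|_k(\sa\tau) = f|_k \sa = A_\ca f$, so $(A_\ca f)(z+1) = (A_\ca f)(z)$, as claimed.

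There is really no serious obstacle here; the only point requiring a little care is the bookkeeping with $\pm I$ and the fact that \eqref{ginf} describes $\sa^{-1}\G_\ca\sa$ only up to sign, so one should note that $f|_k(-I) = f$ (as $k$ is even) to be sure the identification $\sa\tau = p\sa$ with $p \in \G_\ca$ can be made without worrying about the central sign. I would also remark that the same argument, with $\tau$ replaced by an arbitrary power, shows $(A_\ca f)(z + m) = (A_\ca f)(z)$ for all $m \in \Z$, which is what makes the subsequent Fourier expansion in section \ref{parsec} legitimate. The proof is thus a two-line application of the cocycle relation, and I expect the write-up to be correspondingly short.
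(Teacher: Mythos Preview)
Your proof is correct and follows essentially the same route as the paper: both compute $(A_\ca f)(z+1)=(f|_k\sa)(\tau z)=f|_k(\sa\tau)$ using $j(\tau,z)=1$, then conjugate $\tau$ by $\sa$ to land in $\G_\ca$ and invoke $f|_k\g_\ca=f$. The paper simply writes $\sa\tau\sa^{-1}=\g_\ca$ directly rather than your more cautious ``$p\in\G_\ca$'', but the argument is identical.
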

\begin{proof}
We have
\begin{eqnarray*}
\left(A_{\ca}f \right)(z+1) & = & \bigl(f|_k \sa\bigr) ((\smallmatrix 1 & 1 \\ 0 & 1
\endsmallmatrix)z)\\
& = &   \left( \bigl(f|_k \sa(\smallmatrix 1 & 1 \\ 0 & 1 \endsmallmatrix)
\sa^{-1}\bigr)|_k \sa \right) (z)\\
& = & \left( \bigl(f|_k \g_\ca \bigr)|_k \sa \right)  (z)\\
& = & \left(A_{\ca}f \right)(z).
\end{eqnarray*}
\end{proof}

It follows that $f$ in $S_k$ or $S_k^n$ implies $\left(A_{\ca}f \right)(z)$ has period 1
and is holomorphic on $\H$. It consequently has a Fourier expansion
\begin{equation}\label{fexp}
    \left(A_{\ca}f \right)(z)=\sum_{m \in \Z} b_\ca(m) e^{2\pi i m z}.
\end{equation}
The rapid decay condition at the cusp $\ca$ in the definitions of $S_k$ and $S_k^n$ is
then
\begin{equation}\label{expdecay}
    \left(A_{\ca}f \right)(z)= (f|_k \sa)(z) \ll_\ca e^{-c y}
\end{equation}
as $y \to \infty$ uniformly in $x$ for some constant $c>0$. This must hold at each of
the cusps $\ca$. It is equivalent to $f|_k \sa$ only having terms with $m \geqslant 1$ in
the expansion (\ref{fexp}).
We make the following definition, valid for all $f \in S_k^n$.
\begin{adef}
For $f \in S_k^n$, the parabolic expansion of $f$ at $\ca$ is
\begin{equation} \label{parexp}
     \left(f|_k  \sa\right) (z)=\sum_{m \in \N} b_\ca(m) e^{2\pi i m z}.
\end{equation}
\end{adef}

Suppose $\cb = \g \ca$ is another cusp $\G$-equivalent to $\ca$. Then
$$
\G_\cb = \G_{\g \ca} = \g \G_\ca \g^{-1}
$$
and $\sb = \sigma_{\g \ca} =   \g\sa$.

We have
\begin{eqnarray*}
  \left(f|_k  \sb\right) (z) &=& j(\g \sa,z)^{-k} f(\g \sa z) \\
   &=& j(\sa,z)^{-k} (f|_k \g )(\sa z) \\
   &=& j(\sa,z)^{-k}\left(f(\sa z) +f^*(\sa z)\right)\\
   &=& \left(f|_k  \sa\right) (z) +\left(f^*|_k  \sa\right) (z)
\end{eqnarray*}
for $f|_k(\g -1)=f^*$. Thus, if $f \in S_k$ then $f^*=0$ and its parabolic expansions at
$\G$-equivalent cusps are identical. If
 $f \in S_k^n$, its parabolic expansions at $\G$-equivalent cusps are the same up to
addition of the parabolic expansion of an element of $S_k^{n-1}$. Therefore, when testing
whether $f \in S^n_k$, if $f$ satisfies condition (\ref{trans2}) in the definition of
$S_k^n$ then the rapid decay condition  need only be verified  at the finite number of
$\G$-inequivalent cusps of $\G$.

\subsection{}
It is easy to see that the reasoning in Lemma \ref{parperiodic} may be reversed. If $g(z)$
has period $1$ then
\begin{equation}\label{supa}
A_{\ca}^{-1}g :=g|_k (\sa^{-1})
\end{equation}
 satisfies $\left(A_{\ca}^{-1}g \right)|_k \g_\ca = A_{\ca}^{-1}g$. Therefore
$\left(A_{\ca}^{-1}g \right)|_k \g = A_{\ca}^{-1}g$ for all $\g \in \G_\ca$. If we have a
holomorphic function $g$ on $\H$ with period $1$ then setting $\phi = A_{\ca}^{-1}g$ and
$\G_0 = \G_\ca$ in Theorem \ref{relpoin} gives us a natural candidate for a relative
Poincar\'e series. The obvious periodic functions to use are the ones appearing on the
right of (\ref{parexp}): $ e^{2\pi i m z}$.
So, recalling definition (\ref{relpoincare}), we set
\begin{equation}\label{parpoin}
    \Phi_{\text{\rm Par}}(z,m,\ca) := P[A_{\ca}^{-1} e^{2\pi i m \cdot}](z) = \sum_{\g \in
\G_\ca \backslash \G} \left(\left(A_{\ca}^{-1} e^{2\pi i m \cdot}\right)|_k \g \right)
(z)= \sum_{\g \in \G_\ca \backslash \G}
    \frac{e^{2\pi i m (\sa^{-1} \g z)}}
    {j(\sa^{-1} \g , z)^{k}}.
\end{equation}
The next proposition is also proved in \cite[Sections 3.1, 3.2]{Iw2}, for example.

\begin{prop} \label{parinc}
For $4\leqslant k \in 2\Z$ and $m \in \N$ we have $\Phi_{\text{\rm Par}}(z,m,\ca) \in
S_k$.
\end{prop}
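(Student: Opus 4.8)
The plan is to recognize $\Phi_{\text{\rm Par}}(z,m,\ca)$ as the relative Poincar\'e series $P[\phi]$ of Theorem~\ref{relpoin}, taking $\G_0=\G_\ca$ and $\phi:=A_\ca^{-1}e^{2\pi i m\cdot}$; then the proposition follows as soon as the two hypotheses of that theorem are checked. By (\ref{supa}),
\[
\phi(z)=\bigl(e^{2\pi i m\cdot}|_k\sa^{-1}\bigr)(z)=\frac{e^{2\pi i m(\sa^{-1}z)}}{j(\sa^{-1},z)^k},
\]
which is holomorphic on $\H$ because $e^{2\pi i m w}$ is entire and $\sa^{-1}$ maps $\H$ into $\H$. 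The invariance $\phi|_k\g=\phi$ for $\g\in\G_\ca$ is exactly the remark following (\ref{supa}): $w\mapsto e^{2\pi i m w}$ has period $1$ and $\sa^{-1}\G_\ca\sa=\G_\ci$. Granting these, Theorem~\ref{relpoin} gives at once that the series converges absolutely and locally uniformly to an element of $S_k$, which is the assertion; so the whole content of the proof is the integrability hypothesis (\ref{check}).

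To check (\ref{check}) I would change variables by $z=\sa w$, $w=u+iv$, the point being that $y^{k/2}|g(z)|$ transforms simply under the slash action: for $\delta\in\SL_2(\R)$,
\[
\Im(\delta z)^{k/2}\,|g(\delta z)|=y^{k/2}\,\bigl|(g|_k\delta)(z)\bigr|,
\]
since $\Im(\delta z)=y/|j(\delta,z)|^2$ cancels precisely the factor $|j(\delta,z)|^{-k}$ produced by $g|_k\delta$. Applying this with $g=e^{2\pi i m\cdot}$ and $\delta=\sa^{-1}$ gives $y^{k/2}|\phi(z)|=v^{k/2}|e^{2\pi i m w}|=v^{k/2}e^{-2\pi m v}$. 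Since $d\mu z$ is $\SL_2(\R)$-invariant and $z\mapsto\sa^{-1}z$ carries a fundamental domain for $\G_\ca\backslash\H$ onto one for $\G_\ci\backslash\H$ — which by (\ref{ginf}) may be taken to be $\fun_\infty=\{\,0\leqslant u<1,\ v>0\,\}$ — the integral in (\ref{check}) becomes
\[
\int_{\fun_\infty}v^{k/2}e^{-2\pi m v}\,\frac{du\,dv}{v^2}=\int_0^\infty v^{k/2-2}e^{-2\pi m v}\,dv=\frac{\Gamma(k/2-1)}{(2\pi m)^{k/2-1}},
\]
which is finite: $m\geqslant 1$ gives exponential decay as $v\to\infty$, while $k/2-1\geqslant 1>0$ (here $4\leqslant k$ is used) removes any singularity as $v\to 0^+$. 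Hence (\ref{check}) holds and the proposition follows from Theorem~\ref{relpoin}.

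There is no genuinely hard step: given Theorem~\ref{relpoin} the argument reduces to the single computation above, and the only place calling for care is the change of variables — specifically the identity $y^{k/2}|\phi(z)|=v^{k/2}e^{-2\pi m v}$, which collapses a potentially awkward integral over $\G_\ca\backslash\H$ into the elementary Gamma integral. It is worth recording why the hypothesis $m\in\N$ (rather than $m\in\N_0$) is needed: for $m=0$ the exponential disappears and $\int_0^\infty v^{k/2-2}\,dv$ diverges, so only the strictly positive frequencies of (\ref{parexp}) produce cusp forms, in keeping with the decay condition (\ref{expdecay}).
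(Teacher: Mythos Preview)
Your proof is correct and follows essentially the same route as the paper: reduce to Theorem~\ref{relpoin} with $\G_0=\G_\ca$ and $\phi=A_\ca^{-1}e^{2\pi i m\cdot}$, then verify (\ref{check}) by the change of variables $z=\sa w$ to obtain the Gamma-type integral $\int_0^\infty v^{k/2-2}e^{-2\pi m v}\,dv$. Your version is a bit more explicit about the invariance of $y^{k/2}|\phi|$ and the need for $m\geqslant 1$, but the argument is the same.
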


\begin{proof}
With Theorem \ref{relpoin} and the discussion leading to (\ref{parpoin}) we need only
confirm that (\ref{check}) holds for $\G_0 = \G_\ca$ and $\phi = A_{\ca}^{-1} e^{2\pi i m
\cdot}$. We have
\begin{eqnarray}
  \int_{\G_\ca \backslash \H} |\phi(z)|y^{k/2} \, d\mu z &=& \int_{\G_\ci \backslash \H}
|\phi(\sa z)|\Im (\sa z)^{k/2} \, d\mu z \nonumber\\
   &=& \int_0^\infty \int_0^1 \left| \frac{e^{2\pi i m z}}{j(\sa^{-1},\sa z)^k} \right|
\frac{y^{k/2}}{|j(\sa, z)|^k} \, d\mu z \nonumber \\
   &=& \int_0^\infty e^{-2\pi my}y^{k/2-2} \, dy \label{para3}
\end{eqnarray}
and (\ref{para3}) is bounded for $m \geqslant 1$ and $k>2$.
\end{proof}
The  vector space $S_k$ is equipped
with a natural inner product, due to Petersson in \cite{Pet39},
\begin{equation}\label{petinn}
\s{f_1}{f_2}:=\int_{\G \backslash \H} f_1(z) \overline{f_2(z)} y^{k} \, d\mu z.
\end{equation}
The following well-known result is demonstrated in \cite[Section 3.3]{Iw2}, for example,
using the unfolding technique. We give the hyperbolic and elliptic versions in the next
sections.

\begin{prop} \label{porth}
For $4\leqslant k \in 2\Z$, $m \in \N$ and $f \in S_k$ satisfying (\ref{parexp})  we have
$$
\s{f}{\Phi_{\text{\rm Par}}(\cdot ,m,\ca)} =  b_{\ca}(m)\left[\frac{(k-2)!}{(4\pi
m)^{k-1}} \right].
$$
\end{prop}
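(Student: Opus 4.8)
The plan is to compute $\s{f}{\Phi_{\text{\rm Par}}(\cdot,m,\ca)}$ by the unfolding technique. First I would write out the inner product as an integral over $\GH$ of $f(z)\overline{\Phi_{\text{\rm Par}}(z,m,\ca)}\, y^k \, d\mu z$, substitute the defining sum $\Phi_{\text{\rm Par}}(z,m,\ca) = \sum_{\g \in \G_\ca \backslash \G}\bigl(\bigl(A_{\ca}^{-1} e^{2\pi i m \cdot}\bigr)|_k \g\bigr)(z)$, and interchange sum and integral (justified by the absolute convergence from Theorem \ref{relpoin}). The standard unfolding manipulation then collapses the sum over $\G_\ca \backslash \G$ against the integral over $\GH$ into a single integral over $\G_\ca \backslash \H$, using the weight-$k$ invariance of $f$ and of the measure $y^k \, d\mu z$ under the action; concretely one replaces $z$ by $\g z$ in each term and recombines, so that
$$
\s{f}{\Phi_{\text{\rm Par}}(\cdot ,m,\ca)} = \int_{\G_\ca \backslash \H} f(z) \overline{\bigl(A_{\ca}^{-1} e^{2\pi i m \cdot}\bigr)(z)}\, y^k \, d\mu z.
$$

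Next I would push the integral to the standard cusp at $\infty$ by the change of variables $z \mapsto \sa z$, exactly as in the proof of Proposition \ref{parinc}: the domain $\G_\ca \backslash \H$ becomes $\G_\ci \backslash \H$ with fundamental domain $\fun_\infty = \{0 \leqslant \Re(z) < 1\}$, and the integrand transforms so that $f(\sa z)$ gets divided by $j(\sa,z)^k$ — i.e. becomes $(f|_k \sa)(z)$ — while $\overline{(A_{\ca}^{-1} e^{2\pi i m \cdot})(\sa z)}$ together with the Jacobian factors reduces to $\overline{e^{2\pi i m z}}$, since $A_{\ca}^{-1} e^{2\pi i m \cdot} = e^{2\pi i m \cdot}|_k \sa^{-1}$ by (\ref{supa}). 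One also needs $\Im(\sa z)^k / |j(\sa,z)|^{2k} \cdot (\text{stuff}) = y^k$ combined correctly; this is the same bookkeeping as in (\ref{para3}) and I would not belabor it. The upshot is
$$
\s{f}{\Phi_{\text{\rm Par}}(\cdot ,m,\ca)} = \int_0^\infty \int_0^1 (f|_k \sa)(z)\, e^{-2\pi i m \overline{z}}\, y^{k-2}\, dx\, dy.
$$

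Then I would insert the parabolic expansion (\ref{parexp}), $(f|_k \sa)(z) = \sum_{n \in \N} b_\ca(n) e^{2\pi i n z}$, and do the $x$-integral first: $\int_0^1 e^{2\pi i n x} e^{2\pi i m x}\, dx$ — wait, more carefully, $e^{2\pi i n z} e^{-2\pi i m \overline{z}} = e^{2\pi i (n-m) x} e^{-2\pi(n+m) y}$, so the $x$-integral picks out only the term $n = m$ and kills the rest. This leaves $b_\ca(m) \int_0^\infty e^{-4\pi m y} y^{k-2}\, dy$, and the remaining integral is a Gamma integral: substituting $u = 4\pi m y$ gives $(4\pi m)^{-(k-1)} \Gamma(k-1) = (k-2)!/(4\pi m)^{k-1}$, which is exactly the claimed bracketed factor.

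The only genuine subtlety — and the step I would be most careful about — is the justification of the unfolding, namely that one may legitimately interchange the summation over $\G_\ca \backslash \G$ with the integration over $\GH$ and that the coset representatives combine correctly with the fundamental domain. This is handled by the absolute and locally uniform convergence established in Theorem \ref{relpoin} (applied with $\G_0 = \G_\ca$, which makes $\phi = A_\ca^{-1} e^{2\pi i m \cdot}$ satisfy the hypothesis by Proposition \ref{parinc}'s verification of (\ref{check})), together with the fact that $f$ is bounded on $\GH$; after that, the rearrangement into an integral over $\G_\ca \backslash \H$ is the routine unfolding identity. The rest is the elementary Fourier-orthogonality and Gamma-function computation sketched above.
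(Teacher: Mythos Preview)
Your proposal is correct and follows precisely the unfolding technique that the paper cites for this result (referring to \cite[Section 3.3]{Iw2}) and carries out in full for the hyperbolic and elliptic analogues, Propositions \ref{horth} and \ref{eorth}. The computation --- unfolding to $\G_\ca \backslash \H$, changing variables via $\sa$, inserting the expansion (\ref{parexp}), and evaluating the resulting Gamma integral --- is exactly what the paper intends.
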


\section{Hyperbolic expansions} \label{hypsec}

\subsection{}
This material is based on Petersson's work in \cite{peter}, see also \cite{Hiramatsu}.
An element $\g$ of $\G$ is hyperbolic if $|\tr(\g)|>2$. Denote the set of all such
elements $\operatorname{Hyp}(\G)$.
Let $\eta=\{\eta_1, \eta_2\}$ be a hyperbolic pair of points in $\R \cup \{\infty\}$ for
$\G$. By this we mean that there exists one element of $\operatorname{Hyp}(\G)$ that fixes
each of $\eta_1$, $\eta_2$. The set of all such $\g$ is a  group which we label $\G_\eta$.
As in the parabolic case this group is isomorphic to $\Z$, see \cite[Theorem 2.3.5]{Katok}
for the proof, and $\G_\eta =\langle \g_\eta \rangle$.
 There exists a  scaling matrix $\se  \in \SL_2(\R)$ such that $\se 0=\eta_1$, $\se
\infty = \eta_2$
and
\begin{equation}\label{etaconj}
\se^{-1} \g_\eta \se = \pm\begin{pmatrix} \xi &  \\ & \xi^{-1} \end{pmatrix}
\end{equation}
for $\xi \in \R$. This scaling matrix $\se$ is unique up to multiplication on the right by any $\left(\smallmatrix x
& 0 \\ 0 & x^{-1}
\endsmallmatrix\right)$ with $x \in \R$.   Replacing the generator $\g_\eta$ by $\g_\eta^{-1}$ if necessary we may
assume $\xi^2 >1$. Let
$$
\fun_{\eta} :=\{z \in \H : 1 \leqslant |z| < \xi^2\}.
$$
Then it is easy to see that $\se( \fun_{\eta} )$ is a fundamental domain for $\G_\eta
\backslash \H$. See Figure \ref{hfig}.
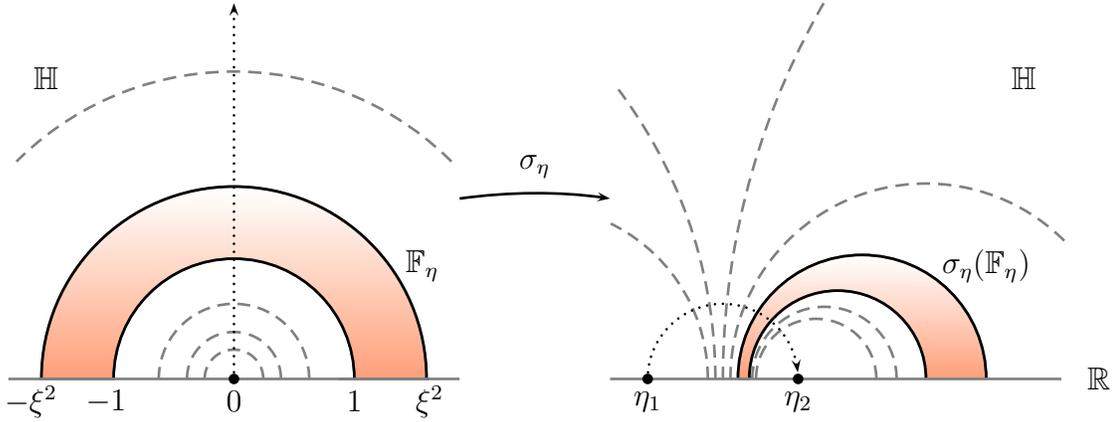
\begin{figure}[h]
\begin{center}
\begin{pspicture}(-0.8,-0.5)(14.8,5) 

\psset{linewidth=1pt}

\pscustom[fillstyle=gradient,linecolor=gray,linewidth=1pt,gradmidpoint=1,gradbegin=lightsalmon,gradend=white,gradlines=50,gradangle=180]{%
  \psline(0.4,0)(1.4,0)
  \psarcn(3,0){1.6}{180}{0}
  \psline(4.4,0)(5.6,0)
  \psarc(3,0){2.56}{0}{180}}

  \psarcn(3,0){1.6}{180}{0}
  \psarc(3,0){2.56}{0}{180}


  \psarc[linecolor=gray,linestyle=dashed](3,0){1}{0}{180}
  \psarc[linecolor=gray,linestyle=dashed](3,0){0.625}{0}{180}
  \psarc[linecolor=gray,linestyle=dashed](3,0){0.39}{0}{180}
  \psarc[linecolor=gray,linestyle=dashed](3,0){4.09}{45}{135}

  \psline[linewidth=1pt,linecolor=gray](0,0)(6,0)

  \psset{shortput=nab}
  \pnode(6,2.4){aa}
  \pnode(8,2.4){bb}
  \ncarc{->}{aa}{bb}^{$\se$}

  \rput(0.5,4){$\H$}
  \rput(13.5,4){$\H$}

  \rput(1.3,-0.3){$-1$}
  \rput(5.6,-0.3){$\xi^2$}
  \rput(0.3,-0.3){$-\xi^2$}
  \rput(4.6,-0.3){$1$}
  \psdot*(3,0)
  \rput(14.5,0){$\R$}
  \rput(5.5,1.5){$\fun_{\eta} $}
  \rput(13.0,1.5){$\se(\fun_{\eta} )$}

  \pscustom[fillstyle=gradient,gradmidpoint=1,gradbegin=white,gradend=lightsalmon,gradlines=50,gradangle=0,linewidth=1pt,linecolor=gray]{%
  \psarc(11.35,0){1.65}{0}{180}
  \psline(9.7,0)(9.85,0)
  \psarcn(11.025,0){1.175}{180}{0}}

  \psarc[linestyle=dashed,linecolor=gray](12.2,0){2.6}{45}{180}
  \psarc[linestyle=dashed,linecolor=gray](10.85,0){0.96}{0}{180}
  \psarc[linestyle=dashed,linecolor=gray](10.74,0){0.8}{0}{180}
  \psarc[linestyle=dashed,linecolor=gray](19.5,0){10}{150}{180}
  \psarc[linestyle=dashed,linecolor=gray](7,0){2.3}{0}{64}
  \psarc[linestyle=dashed,linecolor=gray](3,0){6.4}{0}{37}

  \psarc(11.35,0){1.65}{0}{180}
  \psarcn(11.025,0){1.175}{180}{0}

  \psline[linewidth=1pt,linecolor=gray](8,0)(14,0)

  \psarcn[arcsepB=2pt,linestyle=dotted,dotsep=2pt]{->}(9.5,0){1}{180}{3}
  \psline[linestyle=dotted,dotsep=2pt]{->}(3,0)(3,5)


  \psdot*(8.5,0)
  \psdot*(10.5,0)
  \rput(8.5,-0.3){$\eta_1$}
  \rput(10.5,-0.3){$\eta_2$}
  \rput(3,-0.3){$0$}

\end{pspicture}
\caption{The hyperbolic scaling map \label{hfig}}
\end{center}
\end{figure}


\begin{lemma} \label{hypperiodic}
For any function $f$ with $f|_k \g_\eta = f$, let
$$
\left( A_{\eta}f \right)(z):=\xi^{kz}\bigl(f|_k \se\bigr) (\xi^{2z}).
$$
Then $\left( A_{\eta}f \right)(z+1)=\left( A_{\eta}f \right)(z)$.
\end{lemma}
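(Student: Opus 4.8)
The plan is to mimic the computation in Lemma \ref{parperiodic}, replacing the parabolic generator and its conjugation by the hyperbolic data. First I would observe that translation by $1$ on the variable $z$ corresponds, under the substitution $w = \xi^{2z}$, to multiplication of $w$ by $\xi^2$; that is, $\xi^{2(z+1)} = \xi^2 \cdot \xi^{2z}$. So evaluating $\left(A_\eta f\right)(z+1)$ introduces the matrix $\left(\smallmatrix \xi & 0 \\ 0 & \xi^{-1} \endsmallmatrix\right)$ acting on $\xi^{2z}$. The key identity to invoke is (\ref{etaconj}), which says $\se^{-1}\g_\eta \se = \left(\smallmatrix \xi & 0 \\ 0 & \xi^{-1}\endsmallmatrix\right)$, equivalently $\se \left(\smallmatrix \xi & 0 \\ 0 & \xi^{-1}\endsmallmatrix\right)\se^{-1} = \g_\eta$, exactly the analog of the relation $\sa\left(\smallmatrix 1 & 1 \\ 0 & 1\endsmallmatrix\right)\sa^{-1} = \g_\ca$ used before.

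Concretely I would write, with $\delta := \left(\smallmatrix \xi & 0 \\ 0 & \xi^{-1}\endsmallmatrix\right)$,
\begin{eqnarray*}
\left(A_\eta f\right)(z+1) &=& \xi^{k(z+1)}\bigl(f|_k \se\bigr)(\xi^{2z+2}) \\
&=& \xi^{kz}\cdot \xi^{k}\bigl(f|_k \se\bigr)(\delta\, \xi^{2z}),
\end{eqnarray*}
and then recognize that $\xi^{k}\bigl(f|_k\se\bigr)(\delta w) = j(\delta, w)^{k}\bigl(f|_k\se\bigr)(\delta w) = \bigl((f|_k\se)|_k\delta\bigr)(w)$, since $j(\delta, w) = \xi^{-1}\cdot(\text{the }(2,2)\text{ entry is }\xi^{-1})$ gives $j(\delta,w) = \xi^{-1}$ — here I must be careful with the precise form of $j$ for a diagonal matrix: for $\delta = \left(\smallmatrix \xi & 0 \\ 0 & \xi^{-1}\endsmallmatrix\right)$ one has $j(\delta, w) = \xi^{-1}$, so $j(\delta,w)^{-k} = \xi^{k}$, which is precisely the factor that appears. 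Hence $\xi^{k}\bigl(f|_k\se\bigr)(\delta w) = \bigl((f|_k\se)|_k\delta\bigr)(w)$.

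Then I would push the $|_k\delta$ through using $\delta = \se^{-1}\g_\eta\se$, so that $(f|_k\se)|_k\delta = f|_k(\se\se^{-1}\g_\eta\se) = f|_k(\g_\eta\se) = (f|_k\g_\eta)|_k\se = f|_k\se$, where the last equality uses the hypothesis $f|_k\g_\eta = f$. Evaluating at $w = \xi^{2z}$ and restoring the prefactor gives $\xi^{kz}\bigl(f|_k\se\bigr)(\xi^{2z}) = \left(A_\eta f\right)(z)$, as desired. The only real point requiring care — and the place where a sign or normalization slip could occur — is the bookkeeping of the automorphy factor $j(\delta, \cdot)$ for the diagonal matrix $\delta$ and the matching of powers of $\xi$; the group-theoretic manipulation with (\ref{etaconj}) is then entirely parallel to Lemma \ref{parperiodic}. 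I expect no genuine obstacle beyond this careful tracking of the $\xi$-powers.
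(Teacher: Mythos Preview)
Your proposal is correct and follows essentially the same route as the paper: expand $(A_\eta f)(z+1)$, recognize the shift as the action of the diagonal matrix $\delta=\left(\smallmatrix \xi & 0 \\ 0 & \xi^{-1}\endsmallmatrix\right)$, absorb the factor $\xi^k$ into the stroke operator via $j(\delta,w)=\xi^{-1}$, and then use (\ref{etaconj}) together with $f|_k\g_\eta=f$ to conclude. The paper merely packages the automorphy-factor step as the preliminary identity $f(\delta z)=\xi^{-k}\bigl(f|_k\delta\bigr)(z)$, but the computation is otherwise line-for-line the same; note also that the $\pm$ in (\ref{etaconj}) is harmless since $k$ is even.
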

\begin{proof}
First note that
$$
f\bigl((\smallmatrix \xi &  \\  & \xi^{-1} \endsmallmatrix) z  \bigr) = \xi^{-k}
\bigl(f|_k (\smallmatrix \xi &  \\  & \xi^{-1} \endsmallmatrix)\bigr)(z)
$$
and with a calculation,
\begin{eqnarray*}
\left( A_{\eta}f \right)(z+1) & = & \xi^{k(z+1)}\bigl(f|_k \se\bigr) (\xi^{2(z+1)})\\
& = & \xi^{k}\xi^{kz}\bigl(f|_k \se\bigr) \bigl((\smallmatrix \xi &  \\  & \xi^{-1}
\endsmallmatrix)\xi^{2z}\bigr)\\
& = & \xi^{kz}  \left( \bigl(f|_k \se(\smallmatrix \xi &  \\  & \xi^{-1} \endsmallmatrix)
\se^{-1}\bigr)|_k \se \right)  (\xi^{2z})\\
& = & \xi^{kz}\left( \bigl(f|_k \g_\eta \bigr)|_k \se \right)  (\xi^{2z})\\
& = & \xi^{kz} \bigl(f|_k \se \bigr)  (\xi^{2z})\\
& = & \left( A_{\eta}f \right)(z).
\end{eqnarray*}
\end{proof}

If $f \in S_k$ then $A_{\eta}f $ has period 1 and hence a Fourier expansion:

\begin{equation}\label{fffz}
\left( A_{\eta}f \right)(z)=\sum_{m \in \Z} b_{\eta}(m) e^{2\pi i m z}.
\end{equation}
Put $w=\xi^{2z}$ so that $e^{2\pi i  z}=w^{\pi i/\log \xi}$. Then (\ref{fffz}) implies
that $f \in S_k$ must have the following expansion.
\begin{adef}
The hyperbolic expansion of $f \in S_k$ at $\eta$ is
\begin{equation}\label{hypexp}
\left( f|_k \se \right) (w)= \sum_{m \in \Z} b_{\eta}(m)w^{-k/2+\pi i m/\log \xi}.
\end{equation}
\end{adef}
Petersson introduced this expansion in \cite{peter}. It is also valid for $f \in S_k^n$
provided $f|_k(\g-1) =0$ for all $\g \in \G_\eta$.

\subsection{}

Reversing Lemma \ref{hypperiodic} we see that if $g(z+1)=g(z)$ then the inverse operator
to $A_{\eta}$ acts as follows
\begin{equation}\label{supe}
A_{\eta}^{-1} g:=\left( B_\eta g \right)|_k (\se^{-1})  \text{ \ \ for \ \ } \left( B_\eta
g \right)(z) := z^{-k/2}g\left( \frac{\log(z)}{2\log \xi}\right)
\end{equation}
and $\left( A_{\eta}^{-1} g \right) |_k \g_\eta = A_{\eta}^{-1} g$.
Set
\begin{multline}
    \Phi_{\text{\rm Hyp}}(z,m,\eta)  :=  P[ A_{\eta}^{-1} e^{2\pi i m \cdot}](z) \\
     = \sum_{\g
\in \G_\eta \backslash \G} \left(\left(A_{\eta}^{-1} e^{2\pi i m \cdot}\right)|_k \g
\right) (z) = \sum_{\g \in \G_\eta \backslash \G}
    \frac{(\se^{-1}\g z)^{-k/2+\pi i m /\log \xi}}
    {j(\se^{-1}\g , z)^{k}}. 
\end{multline} \label{hyppoin}

\begin{prop} \label{hypinc}
For $4\leqslant k \in 2\Z$ and $m \in \Z$ we have $\Phi_{\text{\rm Hyp}}(z,m,\eta) \in
S_k$.
\end{prop}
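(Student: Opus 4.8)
The plan is to apply Theorem~\ref{relpoin} with $\G_0 = \G_\eta$ and $\phi = A_{\eta}^{-1} e^{2\pi i m \cdot}$, in exact parallel with the proof of Proposition~\ref{parinc}. Three points must be checked: that $\phi$ is holomorphic on $\H$, that $\phi|_k \g = \phi$ for all $\g \in \G_\eta$, and that the integrability condition~(\ref{check}) holds for this $\G_0$ and $\phi$. The first two are essentially already recorded in the discussion around~(\ref{supe}). By definition $\phi = \bigl(B_\eta e^{2\pi i m \cdot}\bigr)|_k(\se^{-1})$, where $\bigl(B_\eta e^{2\pi i m \cdot}\bigr)(z) = z^{-k/2 + \pi i m/\log \xi}$ with the principal branch of the logarithm; this is holomorphic on $\H$, and applying $|_k \se^{-1}$, which maps $\H$ to itself, preserves holomorphy. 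The identity $\bigl(A_{\eta}^{-1} g\bigr)|_k \g_\eta = A_{\eta}^{-1} g$ noted after~(\ref{supe}), together with $\G_\eta = \langle \g_\eta \rangle$, then gives $\phi|_k \g = \phi$ for all $\g \in \G_\eta$.

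The substance is the integral estimate. Using that $\se(\fun_\eta)$ is a fundamental domain for $\G_\eta \backslash \H$ and that $d\mu z$ is $\SL_2(\R)$-invariant,
$$
\int_{\G_\eta \backslash \H} |\phi(z)|\, y^{k/2}\, d\mu z = \int_{\fun_\eta} |\phi(\se z)|\, \Im(\se z)^{k/2}\, d\mu z .
$$
Since $\phi|_k \se = B_\eta e^{2\pi i m \cdot}$ we have $\phi(\se z) = z^{-k/2+\pi i m/\log\xi}\, j(\se,z)^k$, and combining this with $\Im(\se z) = y/|j(\se,z)|^2$ the factors $j(\se,z)$ cancel. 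Writing $z = r e^{i\theta}$, the integrand becomes $|z|^{-k/2} y^{k/2} \bigl|z^{\pi i m/\log\xi}\bigr| = \sin^{k/2}\theta \cdot e^{-\pi m \theta/\log \xi}$. In polar coordinates $d\mu z = dr\, d\theta / (r \sin^2\theta)$ and $\fun_\eta$ is the box $1 \leqslant r < \xi^2$, $0 < \theta < \pi$, so the integral factors as
$$
\left(\int_1^{\xi^2} \frac{dr}{r}\right)\left(\int_0^\pi \sin^{k/2-2}\theta \cdot e^{-\pi m \theta/\log \xi}\, d\theta\right),
$$
and it remains to see that both factors are finite.

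The first factor equals $2\log\xi$. In the second, the exponential $e^{-\pi m \theta/\log\xi}$ stays bounded on $[0,\pi]$ for $m$ of either sign --- which is why, unlike the parabolic case, no restriction on the sign of $m$ is needed --- so the only concern is the behaviour of $\sin^{k/2-2}\theta$ at the endpoints $\theta = 0, \pi$; this is integrable precisely when $k/2 - 2 > -1$, that is $k > 2$, which holds since $k \geqslant 4$. This is the exact analog of the ``$k>2$'' remark in Proposition~\ref{parinc} and is the only real obstacle in the argument. With~(\ref{check}) verified, Theorem~\ref{relpoin} gives that $\Phi_{\text{\rm Hyp}}(z,m,\eta)$ converges absolutely and uniformly on compact subsets of $\H$ to an element of $S_k$.
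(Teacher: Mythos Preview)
Your proof is correct and follows essentially the same route as the paper's own argument: both reduce to verifying condition~(\ref{check}) of Theorem~\ref{relpoin}, pull back to $\fun_\eta$ via $\se$, pass to polar coordinates, and arrive at the factored integral $2\log\xi \cdot \int_0^\pi (\sin\theta)^{k/2-2} e^{-\pi m\theta/\log\xi}\,d\theta$, finite for $k>2$ and all $m\in\Z$. Your version is slightly more explicit about the holomorphy/invariance preconditions and about why no sign restriction on $m$ is needed, but the substance is identical.
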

\begin{proof}
With Theorem \ref{relpoin} we must verify (\ref{check}). We have
\begin{eqnarray*}
 \int_{\G_\eta \backslash \H} \left| \frac{(\se^{-1}z)^{-k/2+\pi im/\log
\xi}}{j(\se^{-1},z)^k}\right|
 y^{k/2}\, d\mu z & = & \int_{\fun_{\eta} } \left| w^{-k/2+\pi im/\log \xi}\right|
 \Im(w)^{k/2}\, d\mu w\\
 & = & \int_1^{\xi^2} \int_0^{\pi} r^{-k/2} e^{-\pi m \theta/\log \xi} (r\sin
\theta)^{k/2-2} \, r d\theta dr\\
 & = & 2 \log \xi \int_0^{\pi}  e^{-\pi m \theta/\log \xi} (\sin \theta)^{k/2-2} \,
d\theta.
\end{eqnarray*}

This is bounded for all $m \in \Z$ and $k>2$ as required.
\end{proof}

\begin{prop} \label{horth}
For $4\leqslant k \in 2\Z$, $m \in \Z$  and $f \in S_k$ satisfying (\ref{hypexp}) we have
\begin{eqnarray}
  \s{f}{\Phi_{\text{\rm Hyp}}( \cdot ,m,\eta)} &=& b_{\eta}(m)\left[ 2\log \xi \int_0^\pi
 e^{-2\pi m \theta /\log \xi} (\sin \theta)^{k-2} \, d\theta\right] \label {i1i}\\
   &=&\begin{cases}   b_{\eta}(m) \cdot  \frac{2\log \xi}{(2i)^{k-1}}\left(e^{-2\pi^2 m/\log \xi}-1\right)
\frac{ \G\left(\frac{\pi i m}{\log \xi} -\frac k2 +1\right) \G(k-1)}
   {\G\left(\frac{\pi m}{\log \xi} + \frac k2\right)  } \label {i2i} & \text {if  }m\neq 0\\
  b_{\eta}(0) \cdot  \frac{2 \pi \log \xi}{2^{k-2}}\binom{k-2}{k/2-1} & \text {if } m=0.\\
 \end{cases}
  \end{eqnarray}
\end{prop}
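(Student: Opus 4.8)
The plan is to compute $\s{f}{\Phi_{\text{\rm Hyp}}(\cdot,m,\eta)}$ by the unfolding technique, exactly paralleling the parabolic case of Proposition \ref{porth}. First I would write the Petersson inner product as an integral over $\G\backslash\H$, substitute the definition of $\Phi_{\text{\rm Hyp}}$ as a sum over $\G_\eta\backslash\G$, and unfold to obtain an integral over $\G_\eta\backslash\H$ of $f(z)\overline{(A_\eta^{-1}e^{2\pi i m\cdot})(z)}\,y^k\,d\mu z$. The absolute convergence needed to justify unfolding is exactly the bound established in the proof of Proposition \ref{hypinc}, applied with $f$ replaced by its pointwise absolute value (which is legitimate since $f\in S_k$ decays rapidly), so this step is routine. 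Using the scaling matrix $\se$ and the change of variables $z\mapsto \se z$, then $w=\xi^{2z}$, i.e.\ $w=re^{i\theta}$, I would turn this into $2\log\xi\int_0^\pi \left(f|_k\se\right)(w)\,\overline{w^{-k/2+\pi i m/\log\xi}}\,(\text{something})\,d\theta$ after inserting the hyperbolic expansion (\ref{hypexp}) of $f$ and using orthogonality of $\{e^{-2\pi i n\theta\cdot}\}$-type characters in the $\theta$ variable (here the ``angular'' variable after the $w=\xi^{2z}$ substitution is really $2\log\xi\cdot x$, with $x=\Re z$ ranging over a period). This isolates the single coefficient $b_\eta(m)$ and produces the integral $2\log\xi\int_0^\pi e^{-2\pi m\theta/\log\xi}(\sin\theta)^{k-2}\,d\theta$ of (\ref{i1i}); I should be careful that it is the product $w^{-k/2+\pi i m/\log\xi}\cdot\overline{w^{-k/2+\pi i m/\log\xi}}$ that gives the exponent $k-2$ on $\sin\theta$ and the factor $e^{-2\pi m\theta/\log\xi}$ from $|w^{\pi i m/\log\xi}|=e^{-\pi m\arg(w)/\log\xi}$ squared.

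The second equality, (\ref{i2i}), is then a pure definite-integral evaluation: I must show
$$
\int_0^\pi e^{a\theta}(\sin\theta)^{k-2}\,d\theta = \frac{e^{a\pi}-1}{(2i)^{k-1}}\cdot\frac{\G\!\left(\tfrac{k}{2}-1-\tfrac{ia}{2}\right)\G(k-1)}{\G\!\left(\tfrac{k}{2}+\tfrac{ia}{2}\right)}\cdot(\text{adjust})
$$
with $a=-2\pi m/\log\xi$, and separately the $m=0$ special case. The standard route is to write $\sin\theta=(e^{i\theta}-e^{-i\theta})/(2i)$, expand $(\sin\theta)^{k-2}$ by the binomial theorem into $\sum_j \binom{k-2}{j}(-1)^j e^{i(k-2-2j)\theta}/(2i)^{k-2}$, integrate each term against $e^{a\theta}$ over $[0,\pi]$ to get $(e^{(a+i(k-2-2j))\pi}-1)/(a+i(k-2-2j))$, and observe $e^{i(k-2-2j)\pi}=1$ since $k$ is even, so every term carries the common factor $e^{a\pi}-1$. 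What remains is to recognize the resulting finite sum $\sum_j \binom{k-2}{j}(-1)^j/(a+i(k-2-2j))$ as a ratio of Gamma functions; this is a known partial-fractions identity (equivalently a value of a terminating ${}_2F_1$, or Euler's beta-function integral $\int_0^1 t^{s-1}(1-t)^{k-2-s}\,dt$ in disguise after $t=e^{2i\theta}$-type manipulations). For $m=0$ (so $a=0$) the sum collapses: only the middle term $j=k/2-1$ survives the integration cleanly while the others pair off, and one gets $\pi\binom{k-2}{k/2-1}/2^{k-2}$, matching the stated value after the overall $2\log\xi$.

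The main obstacle is the closed-form Gamma evaluation in (\ref{i2i}): verifying that the elementary finite sum equals precisely $\G\!\left(\tfrac{\pi i m}{\log\xi}-\tfrac k2+1\right)\G(k-1)/\G\!\left(\tfrac{\pi m}{\log\xi}+\tfrac k2\right)$ with the correct power of $2i$ and no stray constants. The cleanest way to nail the constant is probably to avoid the binomial sum altogether and instead evaluate $\int_0^\pi e^{a\theta}(\sin\theta)^{k-2}\,d\theta$ directly via the substitution $u=e^{2i\theta}$ (or $\tan(\theta/2)$), turning it into a beta-type integral $\int$ of $u^{\alpha-1}(1-u)^{\beta-1}$ along an arc, and then appeal to analytic continuation to the standard $B(\alpha,\beta)=\G(\alpha)\G(\beta)/\G(\alpha+\beta)$; the factor $e^{a\pi}-1$ appears naturally as the discrepancy between the two endpoints of the arc. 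I would present (\ref{i1i}) in full and then state that (\ref{i2i}) follows from this standard integral evaluation, relegating the bookkeeping of constants to a short computation and checking the $m=0$ case by taking the limit $a\to 0$ (using $\G$-function continuity), which also serves as a consistency check on the normalization.
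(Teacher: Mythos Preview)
Your unfolding argument for (\ref{i1i}) is essentially identical to the paper's: unfold to $\G_\eta\backslash\H$, change to $w=\se^{-1}z$ in polar coordinates $w=re^{i\theta}$ on $\fun_\eta$, and use orthogonality of the characters $r^{\pi i(l-m)/\log\xi}$ in the radial integral $\int_1^{\xi^2}\,dr/r$ (your slightly unusual description of this as an ``angular'' variable via $w=\xi^{2z}$ amounts to the same thing after the substitution $u=\log r/\log\xi$).

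For the closed form (\ref{i2i}) your route differs from the paper's. You propose expanding $(\sin\theta)^{k-2}$ binomially, integrating each exponential, and then recognizing the resulting finite sum $\sum_j\binom{k-2}{j}(-1)^j/(a+i(k-2-2j))$ as a Gamma ratio (or alternatively passing to a beta integral via $u=e^{2i\theta}$). The paper instead sets $\mathcal I_{a,b}=\int_0^\pi e^{a\theta}\sin^b\theta\,d\theta$, integrates by parts twice to obtain the two-step recursion $\mathcal I_{a,b}/b!=\mathcal I_{a,b-2}/((a^2+b^2)(b-2)!)$, and telescopes down to $\mathcal I_{a,0}=(e^{\pi a}-1)/a$; the product $(a^2+b^2)(a^2+(b-2)^2)\cdots(a^2+2^2)$ is then rewritten as a Gamma ratio directly. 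Your binomial approach is perfectly valid and is in fact what the paper uses for the $m=0$ case, but for $m\neq0$ the paper's recursion is cleaner precisely on the point you flag as the main obstacle: it delivers the constants and the Gamma arguments without any need to identify a partial-fraction sum or appeal to analytic continuation of a contour integral. If you stick with your method, the quickest way to close the gap is to observe that your sum is exactly the partial-fraction decomposition of $b!\big/\prod_{j=0}^{b}(a+i(b-2j))$, and that this product equals $(2i)^{b+1}\G(\tfrac{a}{2i}+\tfrac{b}{2}+1)/\G(\tfrac{a}{2i}-\tfrac{b}{2})$.
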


\begin{proof}
Unfold the inner product:
\begin{eqnarray*}
    \s{f}{\Phi_{\text{\rm Hyp}}( \cdot ,m,\eta)} & = & \int_\GH y^k f(z)
\overline{\Phi_{\text{\rm Hyp}}(z,m,\eta)} \, d\mu z\\
& = & \int_\GH y^k f(z) \sum_{\g \in  \G_\eta \backslash \G}
    \overline{\left(\left(A_{\eta}^{-1} e^{2\pi i m \cdot}\right)|_k \g \right)(z) } \,
d\mu z\\
& = & \int_\GH  \sum_{\g \in  \G_\eta \backslash \G} \Im(\g z )^k f(\g z)
    \overline{\left(A_{\eta}^{-1} e^{2\pi i m \cdot}\right) (\g z) } \, d\mu z\\
& = & \int_{\G_\eta \backslash \H}    f(z)
    \overline{\left(A_{\eta}^{-1} e^{2\pi i m \cdot}\right) ( z) } y^k \, d\mu z\\
& = & \int_{\G_\eta \backslash \H}  \left( \frac{\sum_{l \in \Z}
b_{\eta}(l)(\se^{-1}z)^{-k/2+\pi i l /\log \xi}}{j(\se^{-1},z)^{k}} \right)
    \frac{\overline{(\se^{-1} z)^{-k/2+\pi i m /\log \xi}}}
    {\overline{j(\se^{-1},z)}^{k}} y^k \, d\mu z\\
& = & \sum_{l \in \Z} b_{\eta}(l) \int_{\G_\eta \backslash \H}     (\se^{-1}z)^{-k/2+\pi i
l /\log \xi}\overline{(\se^{-1} z)^{-k/2+\pi i m /\log \xi}} \Im(\se^{-1}z)^k \, d\mu z.
\end{eqnarray*}
The integral is
\begin{multline*}
\int_{\fun_{\eta}}     w^{-k/2+\pi i l /\log \xi} \overline{w^{-k/2+\pi i m /\log \xi}}
\Im(w)^k \, d\mu w\\
=\int_1^{\xi^2} \int_0^\pi     (re^{i/\theta})^{-k/2+\pi i l /\log \xi}
\overline{(re^{i/\theta})^{-k/2+\pi i m /\log \xi}} (r\sin \theta)^{k-2} r\, d\theta dr\\
= \ \int_0^\pi   e^{-\theta \pi (l+m) /\log \xi} (\sin \theta)^{k-2} \, d\theta
\int_1^{\xi^2} r^{\pi i (l-m) /\log \xi} \frac{dr}r
\end{multline*}
and letting $u=\log r/\log \xi$ we see that the last integral in $r$ is
$$
\log \xi \int_0^{2} e^{u\pi i (l-m)}\, du = \begin{cases} 2\log \xi & \text{if $l = m$},\\
0 & \text{if $l \neq m$.} \end{cases}
$$
Reassemble to complete the proof of (\ref{i1i}). The integral in (\ref{i1i}) may be evaluated as follows. Let
$$
\mathcal I_{a,b}:=\int_0^\pi e^{a\theta} \sin^b \theta \, d\theta, \text{ \ \ for }a\in \C, b \in 2\N_0.
$$
For all $b \in 2\N_0$ we have
$$
\mathcal I_{0,b}=\int_0^\pi \left( \frac{e^{i\theta}-e^{-i \theta}}{2i} \right)^b \, d\theta
=\frac{\pi}{2^b} \binom{b}{b/2}
$$
using the binomial theorem. The $m=0$ case of 
 (\ref{i2i}) follows. For  $a \neq 0$, we easily have $\mathcal I_{a,0}=(e^{\pi a}-1)/a$.   Using
integration by parts twice we find, for $b \geqslant 2$,
$$
\frac{\mathcal I_{a,b}}{b!} = \frac{1}{a^2+b^2} \frac{\mathcal I_{a,b-2}}{(b-2)!}.
$$
Hence
\begin{eqnarray*}
  \mathcal I_{a,b} &=& \frac{ \mathcal I_{a,0} \cdot b!}{(a^2+b^2)(a^2+(b-2)^2) \cdots (a^2+2^2)} \\
   &=& \frac{ \mathcal I_{a,0} \cdot a \cdot b!}{(2i)^{b+1}\left(\frac{a}{2i}+\frac b2 \right)\left(\frac{a}{2i}+\frac b2 -1\right) \cdots \left(\frac{a}{2i}-\frac b2 \right)} \\
   &=&  \frac{\mathcal I_{a,0} \cdot a \cdot \G(b+1) \G\left(\frac{a}{2i}-\frac b2 \right) }{(2i)^{b+1} \G\left(\frac{a}{2i}+\frac b2 +1 \right)}
\end{eqnarray*}  
and the $m \neq 0$ case of 
 (\ref{i2i}) is proved.
\end{proof}

\subsection{} \label{hyplit}
Various types of hyperbolic series have appeared in the literature.
Associated to a hyperbolic element
$\g_\eta = (\smallmatrix a & b \\ c & d \endsmallmatrix)$ in $\G$ we have the quadratic
form
$$
Q_{\g_\eta}(z):=cz^2+(d-a)z-b
$$
of discriminant $D=(a+d)^2-4>0$ and with zeros $\eta = \{\eta_1, \eta_2\}$ at the fixed
points of $\g_\eta$.  We assume that $\g_\eta$ generates the subgroup of elements fixing
$\eta$, that is, $\G_\eta = \langle \g_\eta \rangle$. Also, since $Q_{\g_\eta}$ depends on the sign
of the matrix entries, in this section we take $\G \subset \SL_2(\R)$ instead of
$\PSL_2(\R)$. Following Katok in \cite{Ka}, we define the  series
\begin{equation}\label{rp}
\theta_{k, \g_\eta}(z) :=
\sum_{\g \in \G_\eta  \backslash \G} \frac{1}{Q_{\g_\eta}(\g z)^{k/2} j(\g,z)^{k}}.
\end{equation}
For $4 \leqslant k \in 2\Z$ it is
 shown in \cite{Ka} that $\theta_{k, \g_\eta} \in S_{k}$ and, moreover, that $S_{k}$ is
spanned by $\theta_{k, \g}$ as $\g$ ranges over all hyperbolic elements of $\G$. In
\cite{GoldmanMillson} they further show that only $\g$ that are words in the group
generators of length at most $2^{k-1}-1$ are required for a spanning set.

\vskip 3mm
Katok uses the series $\theta_{k, \g_\eta}$ to define a hyperbolic rational structure on
$S_{k}$, analogous to the  (parabolic) rational structure associated to the periods of
cusp forms, as in \cite{KoZ}.
For example, with $\G = \SL_2(\Z)$,
$$
C_{k,\g_\eta} \cdot \s{f}{\theta_{k, \g_\eta}}= r_k(f,\g_\eta)
$$
for any $f \in S_{k}$ where the right side is the hyperbolic period of $f$ associated to
$\g_\eta$, defined as
$$
r_k(f,\g_\eta):= \int_{w}^{\g_\eta w} f(z) Q_{\g_\eta}(z)^{k/2-1} \, dz
$$
and independent of $w \in \H$. On the left we have the normalization constant
$$
C_{k,\g_\eta}:=D^{(k-1)/2}\frac{-\sgn( \tr (\g_\eta))}{\pi} \binom{k-2}{k/2-1}^{-1}
2^{k/2-2}.
$$
See \cite{Ka} and  \cite{KoZ} for further details.

\vskip 3mm
 In \cite{Za3}, Zagier encounters the series

\begin{equation}\label{fdisc}
F_{k,D}(z):=\sum_{b^2-4ac =D }  \frac{1}{(az^2+bz+c)^{k/2}}
\end{equation}
for $D \in \N$ where the sum is over all $(a,b,c) \in \Z^3$ with $\gcd(a,b,c)=1$. (We
include the $\gcd$ condition for simplicity.) It is essentially the $D$th Fourier
coefficient of the holomorphic kernel function for the Shimura-Shintani correspondence
between forms of integral and half-integral weight. We can express the quadratic form
appearing in (\ref{fdisc}) as
$$
Q(z) =az^2+bz+c = M_Q \begin{bmatrix}  z \\ 1 \end{bmatrix}:= \begin{pmatrix}  z \\ 1
\end{pmatrix}^t M_Q \begin{pmatrix}  z \\ 1 \end{pmatrix}
$$
for the matrix $M_Q:=\begin{pmatrix} a & b/2 \\ b/2 & c \end{pmatrix}$. We say two
quadratic forms $Q$, $Q'$ are equivalent if $M_{Q'} = \g^t M_Q \g$ for some $\g \in \G =
\SL_2(\Z)$. The equivalence class $[Q]$ is the set of all forms equivalent to $Q$. The
class number $h(D)<\infty$ is the number of equivalence classes of forms of
discriminant $D$. For a quadratic form $Q$ of discriminant $D>0$ put
\begin{equation} \label{fdisc2}
F_{k,D,[Q]}(z)=\sum_{Q' \in [Q]}  \frac{1}{Q'(z)^{k/2}}.
\end{equation}
Thus we may break up $F_{k,D}$ into $h(D)$ pieces $F_{k,D,[Q]}$.

\begin{prop}
For $\G = \SL_2(\Z)$ we have $F_{k,D,[Q_{\g_\eta}]} = \theta_{k,\g_\eta}$.
\end{prop}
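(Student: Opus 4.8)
The plan is to move the sum defining $\theta_{k,\g_\eta}$ off the cosets $\G_\eta\backslash\G$ and onto the class $[Q_{\g_\eta}]$, using how the quadratic form $Q_M$ behaves under the slash action. The input is the elementary identity
$$
Q_M(\g z)\,j(\g,z)^{2}=Q_{\g^{-1}M\g}(z)\qquad(\g\in\G),
$$
which I would derive as follows. Writing $M=(\smallmatrix a & b \\ c & d \endsmallmatrix)$ and $J=(\smallmatrix 0 & 1 \\ -1 & 0 \endsmallmatrix)$, one checks directly that $Q_M(z)=(\smallmatrix z & 1 \endsmallmatrix)\,JM\,(\smallmatrix z \\ 1 \endsmallmatrix)$; equivalently the symmetric Gram matrix of $Q_M$ is $M_{Q_M}=\tfrac12(JM-M^{t}J)$. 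Since $(\smallmatrix \g z \\ 1 \endsmallmatrix)=j(\g,z)^{-1}\g\,(\smallmatrix z \\ 1 \endsmallmatrix)$ and $\g^{t}J\g=J$ for $\g\in\SL_2(\R)$, we get $Q_M(\g z)j(\g,z)^{2}=(\smallmatrix z & 1 \endsmallmatrix)\,\g^{t}JM\g\,(\smallmatrix z \\ 1 \endsmallmatrix)=(\smallmatrix z & 1 \endsmallmatrix)\,J(\g^{-1}M\g)\,(\smallmatrix z \\ 1 \endsmallmatrix)$, which is the claim, and reading off Gram matrices also gives $M_{Q_{\g^{-1}M\g}}=\g^{t}M_{Q_M}\g$. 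Because $k$ is even, $k/2\in\Z$ and no branch of the power is involved, so taking $M=\g_\eta$ turns the summand of $\theta_{k,\g_\eta}$ into
$$
\frac{1}{Q_{\g_\eta}(\g z)^{k/2}j(\g,z)^{k}}=\frac{1}{\bigl(Q_{\g_\eta}(\g z)j(\g,z)^{2}\bigr)^{k/2}}=\frac{1}{Q_{\g^{-1}\g_\eta\g}(z)^{k/2}},
$$
so that $\displaystyle\theta_{k,\g_\eta}(z)=\sum_{\g\in\G_\eta\backslash\G}Q_{\g^{-1}\g_\eta\g}(z)^{-k/2}$.

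Next I would check that $\g\mapsto Q_{\g^{-1}\g_\eta\g}$ induces a bijection $\G_\eta\backslash\G\to[Q_{\g_\eta}]$. It is constant on right cosets of $\G_\eta=\langle\g_\eta\rangle$ because $\g_\eta$ commutes with every element of $\G_\eta$. Its values lie in $[Q_{\g_\eta}]$ because $M_{Q_{\g^{-1}\g_\eta\g}}=\g^{t}M_{Q_{\g_\eta}}\g$ is $\SL_2(\Z)$-equivalent to $M_{Q_{\g_\eta}}$, and it is surjective because, conversely, every $Q'\in[Q_{\g_\eta}]$ satisfies $M_{Q'}=\g^{t}M_{Q_{\g_\eta}}\g$ for some $\g\in\SL_2(\Z)$ and hence equals $Q_{\g^{-1}\g_\eta\g}$. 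For injectivity, observe that every conjugate $\g^{-1}\g_\eta\g$ has the same trace $\tr(\g_\eta)$, and that on the set of matrices of any fixed trace the map $M\mapsto Q_M$ is injective: from $Q_M$ one recovers $c$, $d-a$ and $b$, and the trace supplies $a+d$, hence all of $M$. Thus $Q_{\g_1^{-1}\g_\eta\g_1}=Q_{\g_2^{-1}\g_\eta\g_2}$ forces $\g_1^{-1}\g_\eta\g_1=\g_2^{-1}\g_\eta\g_2$, so $\g_2\g_1^{-1}$ commutes with $\g_\eta$; as the elements of $\G$ commuting with the hyperbolic element $\g_\eta$ are exactly those fixing both points of $\eta$, this means $\g_2\g_1^{-1}\in\G_\eta$ and $\G_\eta\g_1=\G_\eta\g_2$. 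Summing over $[Q_{\g_\eta}]$ instead then yields $\theta_{k,\g_\eta}(z)=\sum_{Q'\in[Q_{\g_\eta}]}Q'(z)^{-k/2}=F_{k,D,[Q_{\g_\eta}]}(z)$.

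The substantive content is the transformation rule $Q_M(\g z)j(\g,z)^{2}=Q_{\g^{-1}M\g}(z)$ — a short matrix computation — together with the bookkeeping that identifies the two index sets. The step most in need of care is the bijectivity claim, specifically that the fibres of $\g\mapsto Q_{\g^{-1}\g_\eta\g}$ are exactly the cosets of the subgroup $\G_\eta$ occurring in $\theta_{k,\g_\eta}$, i.e.\ that the centralizer of $\g_\eta$ in $\SL_2(\Z)$ is the stabilizer $\G_\eta$ of the hyperbolic pair $\eta$ (the sign normalization from working in $\SL_2$ rather than $\PSL_2$ causing no trouble because $k$ is even). Note that the argument never uses primitivity of $Q_{\g_\eta}$, so it applies regardless of whether $[Q_{\g_\eta}]$ is one of the $h(D)$ primitive classes.
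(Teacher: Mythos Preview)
Your proof is correct and takes essentially the same approach as the paper: both express $Q_M(z)$ as $\begin{pmatrix} z & 1 \end{pmatrix} JM \begin{pmatrix} z \\ 1 \end{pmatrix}$ with $J=\left(\begin{smallmatrix}0&1\\-1&0\end{smallmatrix}\right)$, derive the transformation law $Q_M(\g z)j(\g,z)^2=Q_{\g^{-1}M\g}(z)$, and identify the index sets by showing the centralizer of $\g_\eta$ in $\G$ equals $\G_\eta$. The only cosmetic differences are that the paper runs the argument from $F_{k,D,[Q_{\g_\eta}]}$ toward $\theta_{k,\g_\eta}$ rather than the reverse, and handles injectivity by writing down the stabilizer $E=\{\g:\g^{-1}\g_\eta\g=\g_\eta\}$ directly, whereas you insert the observation that $M\mapsto Q_M$ is injective on matrices of fixed trace.
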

\begin{proof}
A short computation shows
$$
Q_{\g_\eta}(z) =   \left(   \begin{pmatrix} 0 & 1 \\ -1 & 0 \end{pmatrix} \g_\eta \right)
\begin{bmatrix}  z \\ 1 \end{bmatrix}.
$$
Therefore
$$
[Q_{\g_\eta}] = \bigcup_{\g \in \G} \left( \g^t \left(\begin{pmatrix} 0 & 1 \\ -1 & 0
\end{pmatrix} \g_\eta \right)\g \right)\begin{bmatrix}  z \\ 1 \end{bmatrix}.
$$
To eliminate the repetition in the union, put
\begin{eqnarray*}
  E &:=& \left\{\g \in \G \ \left| \ \g^t \begin{pmatrix} 0 & 1 \\ -1 & 0 \end{pmatrix}
\g_\eta \g = \begin{pmatrix} 0 & 1 \\ -1 & 0 \end{pmatrix} \g_\eta  \right. \right\} \\
   &=& \left\{\g \in \G \ | \ \g^{-1} \g_\eta \g =  \g_\eta  \right\}.
\end{eqnarray*}
Check that $E$ is a subgroup of $\G$ and use (\ref{etaconj}) to show that $E=\G_\eta$.
Hence
\begin{eqnarray*}
  F_{k,D,[Q_{\g_\eta}]}(z) &=& \sum_{\g \in \G_\eta \backslash \G}
  \left(
    \left(\g^t
        \left(
            \begin{pmatrix} 0 & 1 \\ -1 & 0 \end{pmatrix}
            \g_\eta
        \right)
        \g
    \right)
    \begin{bmatrix}  z \\ 1 \end{bmatrix}
  \right)^{-k/2}
\\
&=& \sum_{\g \in \G_\eta \backslash \G}
    \left(
        \left(
            \begin{pmatrix} 0 & 1 \\ -1 & 0 \end{pmatrix}
            \g_\eta
        \right)
        \left[\g \begin{pmatrix}  z \\ 1 \end{pmatrix}
        \right]
        \right)^{-k/2} \\
   &=& \sum_{\g \in \G_\eta \backslash \G}
   \left(   Q_{\g_\eta}(\g z) j(\g,z)^2
 \right)^{-k/2}.
\end{eqnarray*}
\end{proof}

Now we recognize that the  series (\ref{rp}), (\ref{fdisc}), (\ref{fdisc2}) of Katok and
Zagier  are, up to normalization, the Petersson hyperbolic Poincar\'e series with $m=0$:

\begin{prop}
We have $\theta_{k, \g_\eta}(z) = (\xi-\xi^{-1})^{-k/2}  \Phi_{\text{\rm
Hyp}}(z,0,\eta)$.
\end{prop}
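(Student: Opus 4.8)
The plan is to compare the two series term by term, after noting that both are sums over the same coset space $\G_\eta \backslash \G$. By the definitions (\ref{rp}) and (\ref{hyppoin}), it suffices to show that for each $\g$,
\begin{equation*}
\frac{(\se^{-1}\g z)^{-k/2}}{j(\se^{-1}\g, z)^k} = (\xi-\xi^{-1})^{-k/2} \cdot \frac{1}{Q_{\g_\eta}(\g z)^{k/2} j(\g,z)^k},
\end{equation*}
and because both sides factor as $j(\g,z)^{-k}$ times an expression in $\g z$ via the cocycle relation $j(\se^{-1}\g,z) = j(\se^{-1},\g z)j(\g,z)$, everything reduces to the single identity
\begin{equation*}
Q_{\g_\eta}(w) = (\xi-\xi^{-1}) \cdot \frac{w}{j(\se^{-1},w)^2} = (\xi-\xi^{-1}) \cdot (\se^{-1}w) j(\se^{-1},w)^{-2}
\end{equation*}
valid for all $w \in \H$. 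So the whole proposition is really the assertion that $Q_{\g_\eta}$, evaluated anywhere, is a fixed scalar multiple of the pullback under $\se^{-1}$ of the function $w \mapsto w$, weighted by the automorphy factor of weight $2$.

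First I would verify this identity at the level of quadratic forms using the matrix picture from the preceding proposition: writing $Q_{\g_\eta}(w) = \bigl(\left(\smallmatrix 0 & 1 \\ -1 & 0 \endsmallmatrix\right)\g_\eta\bigr)\left[\smallmatrix w \\ 1 \endsmallmatrix\right]$ in the bracket notation, I would substitute $\se(\se^{-1}) = I$ to get
\begin{equation*}
Q_{\g_\eta}(w) = \Bigl( \se^t \Bigl(\begin{smallmatrix} 0 & 1 \\ -1 & 0 \end{smallmatrix}\Bigr)\g_\eta \se \Bigr)\Bigl[ \se^{-1}\begin{smallmatrix} w \\ 1 \end{smallmatrix}\Bigr],
\end{equation*}
and then use $\se^{-1}\left(\smallmatrix w \\ 1 \endsmallmatrix\right) = j(\se^{-1},w)\left(\smallmatrix \se^{-1}w \\ 1 \endsmallmatrix\right)$ to pull out the factor $j(\se^{-1},w)^2$. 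It remains to compute the conjugated matrix $\se^t \left(\smallmatrix 0 & 1 \\ -1 & 0 \endsmallmatrix\right)\g_\eta \se$. Since $\se \in \SL_2(\R)$ we have $\se^t\left(\smallmatrix 0 & 1 \\ -1 & 0 \endsmallmatrix\right)\se = \left(\smallmatrix 0 & 1 \\ -1 & 0 \endsmallmatrix\right)$, so the conjugated matrix equals $\left(\smallmatrix 0 & 1 \\ -1 & 0 \endsmallmatrix\right)(\se^{-1}\g_\eta\se)$, and by (\ref{etaconj}) (choosing the sign so $\se^{-1}\g_\eta\se = \left(\smallmatrix \xi & 0 \\ 0 & \xi^{-1} \endsmallmatrix\right)$) this is $\left(\smallmatrix 0 & \xi^{-1} \\ -\xi & 0 \endsmallmatrix\right)$. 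Applying the bracket form $\left(\smallmatrix 0 & \xi^{-1} \\ -\xi & 0 \endsmallmatrix\right)\left[\smallmatrix u \\ 1 \endsmallmatrix\right] = (\xi^{-1}-\xi)u \cdot(-1)\cdots$ — a one-line computation — yields $(\xi-\xi^{-1})u$ with $u = \se^{-1}w$, up to the sign bookkeeping. This gives exactly the claimed identity.

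The main obstacle, and the only real subtlety, is sign/normalization tracking: the section has just switched from $\PSL_2(\R)$ to $\SL_2(\R)$ so that $Q_{\g_\eta}$ is well-defined, and one must check that the sign of $\xi$ in (\ref{etaconj}), the choice of generator $\g_\eta$ (with $\xi^2>1$), and the branch of the $(k/2)$-power in $(\se^{-1}\g z)^{-k/2}$ are all consistent so that $(\xi-\xi^{-1})^{-k/2}$ comes out with the correct branch and no stray sign. Since $\xi^2 > 1$ forces $\xi - \xi^{-1} > 0$ (after possibly replacing $\xi$ by $-\xi$, which does not affect $\g_\eta$ in $\PSL_2$ but is pinned down once we fix $\se$), the quantity $(\xi-\xi^{-1})^{-k/2}$ is an unambiguous positive real and the principal branch is used consistently on both sides; I would spell this out in a sentence. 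Everything else is the short bracket-notation computation above, so the proof is genuinely two or three lines once the setup is in place.
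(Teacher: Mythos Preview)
Your proposal is correct and follows essentially the same route as the paper. The paper likewise reduces to the identity $Q_{\g_\eta}(w) = (\xi-\xi^{-1})\, j(\se^{-1},w)^2\, (\se^{-1}w)$, proved by writing $Q_{\g_\eta}$ in the bracket notation, substituting $\g_\eta = \se\left(\begin{smallmatrix}\xi & 0\\ 0 & \xi^{-1}\end{smallmatrix}\right)\se^{-1}$, and using the $\SL_2$ identity $\left(\begin{smallmatrix}0 & 1\\ -1 & 0\end{smallmatrix}\right)\se = (\se^{-1})^t\left(\begin{smallmatrix}0 & 1\\ -1 & 0\end{smallmatrix}\right)$ (your equivalent formulation is $\se^t J \se = J$); it then substitutes $w\mapsto \g z$ and invokes the cocycle relation, exactly as you do but in the opposite order.
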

\begin{proof}
We find
\begin{eqnarray*}
Q_{\g_\eta}(z) & = & \left(   \begin{pmatrix} 0 & 1 \\ -1 & 0 \end{pmatrix} \g_\eta
\right) \begin{bmatrix}  z \\ 1 \end{bmatrix}
\\
& = &  \left( \begin{pmatrix} 0 & 1 \\ -1 & 0 \end{pmatrix}  \se\begin{pmatrix} \xi &  \\
& \xi^{-1} \end{pmatrix} \se^{-1} \right) \begin{bmatrix}  z \\ 1 \end{bmatrix}
\\
& = & \left(  (\se^{-1})^t \begin{pmatrix} 0 & -1 \\ 1 & 0 \end{pmatrix}  \begin{pmatrix}
\xi &  \\  & \xi^{-1} \end{pmatrix} \se^{-1} \right) \begin{bmatrix}  z \\ 1
\end{bmatrix}
\\
& = &   \begin{pmatrix} 0 & -\xi^{-1} \\ \xi & 0 \end{pmatrix}   \left[ \se^{-1}
\begin{pmatrix} z \\ 1 \end{pmatrix} \right ]
\\
& = & (\xi-\xi^{-1}) j(\se^{-1} , z)^2 \se^{-1} z .
\end{eqnarray*}
Therefore
$$
Q_{\g_0}(\g z)  = (\xi-\xi^{-1}) \frac {j(\se^{-1} \g, z)^2}{j(\g,z)^2} \se^{-1} \g z
$$
and putting this into (\ref{rp}) and comparing with (\ref{hyppoin}) finishes the proof.
\end{proof}

Finally, in this section, we briefly note that Siegel in \cite[Chapter II, \S 3]{Si} found
that the hyperbolic expansion coefficients of the non-holomorphic parabolic Eisenstein
series are essentially Hecke Gr\"{o}ssencharakter $L$-functions  associated to a real
quadratic field. For second-order non-holomorphic Eisenstein series the same computation
is carried out in \cite{CG}, leading to Hecke $L$-functions twisted by modular symbols.

\section{Elliptic expansions} \label{ellsec}

\subsection{}

If $z_0=\alpha+i \beta$ in $\H$ is fixed by a non-identity element of $\G$ it is called an elliptic
point of $\G$. Such group elements necessarily have traces with absolute value less than 2 (and are called elliptic elements). Let $\G_{z_0} \subset \G$ be the subgroup of all elements fixing $z_0$. As
shown in \cite[Theorem 2.3.5, Corollary 2.4.2]{Katok} it is a cyclic group of finite order
$N>1$.
Let $\varepsilon \in \G$ be a generator of $\G_{z_0}$.  There exists $\sz \in \GL(2,\C)$ so
that $\sz 0 = z_0$, $\sz \infty = \overline{z_0}$.
To be explicit, we take
$$
\sz =  \frac{1}{2i \beta}\begin{pmatrix} -\overline{z_0} & z_0 \\ -1 & 1 \end{pmatrix}, \
\
\sz^{-1} =  \begin{pmatrix} 1 & -z_0 \\ 1 & -\overline{z_0} \end{pmatrix}.
$$
Note that $\sz^{-1} $ maps the upper half plane $\H$  homeomorphically to the open unit
disc $\D_1 \subset \C$ centered at the origin.
For any $w \in \H$ a calculation shows
$
(\sz^{-1}  \varepsilon \sz) w = \zeta^2 w $ with $ \zeta = j(\varepsilon, \overline{z_0})
$
 and
$$
\sz^{-1}  \varepsilon \sz = \begin{pmatrix} \zeta &  \\ & \zeta^{-1} \end{pmatrix}.
$$
Hence  $\zeta$ is a primitive $2N$th root of unity: $\zeta = e^{2\pi i m/(2N)}$ for some
$m$ with $(m,2N)=1$. There exists $m'\in \N$ so that $m'm \equiv 1 \mod 2N$ and
$\zeta^{m'} = e^{\pi i /N}$. So, replacing $\varepsilon$ by $\varepsilon^{m'}$ if
necessary,  we may assume $\zeta = e^{\pi i /N}$. Let $\fun_{z_0} $ equal the
central sector covering $1/N$th of the disc and chosen with angle $\theta$ satisfying
$-\pi/N \leqslant \theta-\pi \leqslant \pi/N$, for example, as in Figure \ref{efig}. Then
$\sz \left(\fun_{z_0}  \right)$ is a convenient fundamental domain for $\G_{z_0} \backslash
\H$. Also note that there exists $C(z_0,\G)>0$ such that
\begin{equation}\label{bndedreg}
    |z| < C(z_0,\G) \text{ \ for all \ } z \in \sz \left(\fun_{z_0}  \right).
\end{equation}
In other words the fundamental domain we have chosen is contained in a bounded region of
$\H$. We will need this in the proof of Theorem \ref{secondconst}.


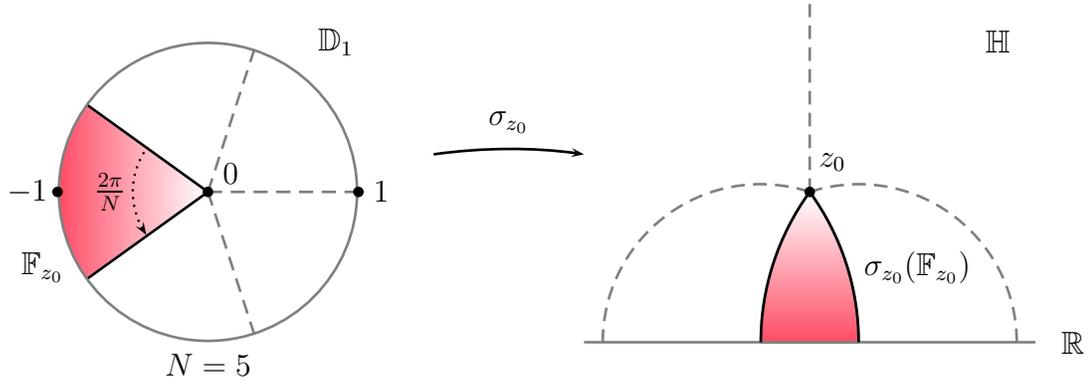
\begin{figure}[ht]
\begin{center}
\begin{pspicture}(-0.8,-0.5)(14.8,4.5) 

\psset{linewidth=1pt}

\pscustom[fillstyle=gradient,linecolor=white,linewidth=1pt,gradmidpoint=1,gradbegin=magenta,gradend=white,gradlines=50,gradangle=90]{%
  \psline(1.383,3.165)(3,2)(1.383,0.836)
  \psarcn(3,2){2}{216}{144}}

  \psline[linewidth=1pt](1.383,3.165)(3,2)(1.383,0.836)

  \psarc[arcsepB=1pt,linestyle=dotted,dotsep=2pt]{->}(3,2){1}{144}{216}
  \psline[linestyle=dashed,linecolor=gray](3,2)(3.618,3.902)
  \psline[linestyle=dashed,linecolor=gray](3,2)(3.618,0.098)
  \psline[linestyle=dashed,linecolor=gray](3,2)(5,2)

  \pscircle[linewidth=1pt,linecolor=gray](3,2){2}
  \psline[linewidth=1pt,linecolor=gray](8,0)(14,0)

  \psset{shortput=nab}
  \pnode(6,2.5){aa}
  \pnode(8,2.5){bb}
  \ncarc{->}{aa}{bb}^{$\sz$}

  \rput(3,-0.3){$N=5$}
  \rput(1.7,2){$\frac{2\pi}{N}$}
  \rput(4.7,4){$\D_1$}
  \rput(13.5,4){$\H$}
  \psdot*(1,2)
  \psdot*(5,2)
  \rput(0.6,2){$-1$}
  \rput(5.3,2){$1$}
  \psdot*(3,2)
  \rput(3.3,2.25){$0$}
  \rput(14.5,0){$\R$}
  \rput(0.8,1){$\fun_{z_0} $}
  \rput(12.43,1){$\sz(\fun_{z_0} )$}

  \psline[linestyle=dashed,linecolor=gray](11,2)(11,4.5)

  \pscustom[fillstyle=gradient,gradmidpoint=1,gradbegin=white,gradend=magenta,gradlines=50,gradangle=0,linewidth=1pt,linecolor=gray]{%
  \psarc(13.752,0){3.402}{144}{180}
  \psline(10.349,0)(11.651,0)
  \psarc(8.248,0){3.402}{0}{36}}

  \psarc[linestyle=dashed,linecolor=gray](10.351,0){2.103}{72}{180}
  \psarc[linestyle=dashed,linecolor=gray](11.649,0){2.103}{0}{108}

  \psarc(13.752,0){3.402}{144}{180}
  \psarc(8.248,0){3.402}{0}{36}
  \psdot*(11,2)

  \rput(11.3,2.4){$z_0$}

\end{pspicture}
\caption{The elliptic scaling map \label{efig}}
\end{center}
\end{figure}


\vskip 3mm
Since any $f \in S_k(\G)$ is holomorphic at $z=z_0$ we see that $f(\sz w)$ is holomorphic
at $w=0$ and has a Taylor series $\sum_n a_{z_0}(n) w^n$. Therefore we get the simple
expansion
\begin{equation}\label{simpleexp}
    f(z)=\sum_{n \in \N_0} a_{z_0}(n) (\sz^{-1} z)^n.
\end{equation}
More useful for our purposes is the slightly different elliptic expansion due to
Petersson. For $f,g:\H \to \C$ define
\begin{eqnarray}
  \left(A_{z_0}f \right)(z) &:=& \zeta^{k z} \bigl(f|_k \sz \bigr) (\zeta^{2 z}),
\nonumber\\
  A_{z_0}^{-1} g &:=& \left(B_{z_0} g \right)|_k (\sz^{-1})  \text{ \ \ for \ \ }
\left(B_{z_0} g \right)(z) := z^{-k/2}g\left( \frac{N\log(z)}{2 \pi i}\right).
\label{supz}
\end{eqnarray}
A calculation identical to that of Lemma \ref{hypperiodic} and its reverse proves the
following.
\begin{lemma} \label{ellperiodic}
We have $A_{z_0}A_{z_0}^{-1}f = A_{z_0}^{-1}A_{z_0}f=f$ and
\begin{eqnarray*}
  (f|_k \varepsilon)(z) = f(z) & \implies & \left(A_{z_0}f \right)(z+1) = \left(A_{z_0}f
\right)(z), \\
  g(z+1)=g(z) & \implies & \left(A_{z_0}^{-1}g \right)|_k \varepsilon  = A_{z_0}^{-1}g.
\end{eqnarray*}
\end{lemma}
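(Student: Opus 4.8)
The plan is to verify the two asserted identities in Lemma \ref{ellperiodic} by direct computation, following the template already established in Lemma \ref{hypperiodic}. The key observation is that the elliptic setup is formally the same as the hyperbolic one, with the real parameter $\xi$ replaced by the root of unity $\zeta = e^{\pi i/N}$ and the scaling matrix $\se$ replaced by $\sz$. Concretely, $\sz^{-1} \varepsilon \sz = \left(\smallmatrix \zeta & 0 \\ 0 & \zeta^{-1} \endsmallmatrix\right)$ plays exactly the role that $\se^{-1}\g_\eta \se = \pm\left(\smallmatrix \xi & 0 \\ 0 & \xi^{-1} \endsmallmatrix\right)$ played before, and the operator $B_{z_0}$ in (\ref{supz}) is built so that $\zeta^{2z}$ as a function of $z$ is inverted by $\tfrac{N\log(\cdot)}{2\pi i}$, just as $\xi^{2z}$ was inverted by $\tfrac{\log(\cdot)}{2\log\xi}$ in (\ref{supe}). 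Since the algebra in the proof of Lemma \ref{hypperiodic} never used that $\xi$ was real or that $\se$ lay in $\SL_2(\R)$ rather than $\GL_2(\C)$, the same chain of equalities goes through verbatim.

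First I would check the mutual-inverse claim $A_{z_0}A_{z_0}^{-1}f = A_{z_0}^{-1}A_{z_0}f = f$. Unwinding the definitions in (\ref{supz}): $A_{z_0}^{-1}g = (B_{z_0}g)|_k(\sz^{-1})$, so $\bigl(A_{z_0}(A_{z_0}^{-1}g)\bigr)(z) = \zeta^{kz}\bigl((A_{z_0}^{-1}g)|_k\sz\bigr)(\zeta^{2z}) = \zeta^{kz}(B_{z_0}g)(\zeta^{2z})$. Now $(B_{z_0}g)(w) = w^{-k/2}g\bigl(\tfrac{N\log w}{2\pi i}\bigr)$, so with $w = \zeta^{2z} = e^{2\pi i z/N}$ one gets $(B_{z_0}g)(\zeta^{2z}) = \zeta^{-kz}g(z)$, and the $\zeta^{kz}$ factors cancel to leave $g(z)$. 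The reverse composition is the same computation read backwards; the only point needing a word is that $\log$ and the fractional powers are being used on the relevant simply connected domains (the image $\sz(\fun_{z_0})$ and the disc sector) so the branch choices are consistent, which is guaranteed by the bounded-sector description of $\fun_{z_0}$ given above.

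Next I would establish the two implications. For the first, assume $(f|_k\varepsilon)(z)=f(z)$ and compute, exactly as in Lemma \ref{hypperiodic},
$$
\left(A_{z_0}f\right)(z+1) = \zeta^{k(z+1)}\bigl(f|_k\sz\bigr)\bigl(\zeta^{2(z+1)}\bigr) = \zeta^{kz}\Bigl(\bigl(f|_k \sz(\smallmatrix \zeta & \\ & \zeta^{-1}\endsmallmatrix)\sz^{-1}\bigr)\big|_k\sz\Bigr)(\zeta^{2z}) = \zeta^{kz}\bigl((f|_k\varepsilon)|_k\sz\bigr)(\zeta^{2z}) = \left(A_{z_0}f\right)(z),
$$
using $\zeta^{2(z+1)} = \bigl(\smallmatrix\zeta & \\ & \zeta^{-1}\endsmallmatrix\bigr)\zeta^{2z}$ and the conjugation relation $\sz(\smallmatrix\zeta & \\ & \zeta^{-1}\endsmallmatrix)\sz^{-1}=\varepsilon$; the factor $\zeta^{k}$ is absorbed by the slash action of the diagonal matrix just as $\xi^k$ was. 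For the second implication, assume $g(z+1)=g(z)$; then $(B_{z_0}g)(w)$ transforms under $w\mapsto\bigl(\smallmatrix\zeta & \\ & \zeta^{-1}\endsmallmatrix\bigr)w$ by picking up exactly the factor $\zeta^{-k}$ needed so that $(B_{z_0}g)|_k\bigl(\smallmatrix\zeta & \\ & \zeta^{-1}\endsmallmatrix\bigr) = B_{z_0}g$, and conjugating by $\sz$ then converts invariance under the diagonal matrix into invariance under $\varepsilon = \sz(\smallmatrix\zeta & \\ & \zeta^{-1}\endsmallmatrix\bigr)\sz^{-1}$, giving $(A_{z_0}^{-1}g)|_k\varepsilon = A_{z_0}^{-1}g$.

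The computations are entirely routine; the one genuine subtlety — and the place I would be most careful — is that $\sz \in \GL_2(\C)\setminus\SL_2(\R)$ and $\zeta$ is complex, so the slash operator $|_k$ and the factor $j(\gamma,z)^{-k}$ involve fractional powers of complex quantities, and one must be sure every $w^{-k/2}$ and $\log w$ appearing is evaluated on a domain where a single holomorphic branch is available and that all the branches match up across the chain of equalities. Because $4\leqslant k\in 2\Z$ is an even integer, $w^{-k/2}$ is in fact single-valued (an integer power), which removes the branch ambiguity in the factors of automorphy entirely; the only transcendental branch choice left is in $\log$ inside $B_{z_0}$, and that is pinned down on the sector $\fun_{z_0}$ by (\ref{bndedreg}) and the explicit angle constraint $-\pi/N\leqslant\theta-\pi\leqslant\pi/N$. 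With $k$ even this last point makes the whole argument go through cleanly, and the lemma follows.
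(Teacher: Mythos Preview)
Your proposal is correct and follows essentially the same approach as the paper, which simply states that ``a calculation identical to that of Lemma \ref{hypperiodic} and its reverse proves the following.'' You have supplied the explicit details of that calculation (and the additional mutual-inverse check), together with a careful remark on branches that the paper omits, but the underlying argument is the same.
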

Note that the matrices $\sz$ and $\sz^{-1}$ have determinants $1/(2i\beta)$ and $2i\beta$
respectively. In this case it is convenient to normalize the stroke operator $|$ and
define
$$
\left( f|_k \g \right)(z) := \frac{\det(\g)^{k/2} f(\g z)}{j(\g,z)^k}.
$$
Obviously this agrees with our previous definition when $\g \in \SL_2(\R)$.

\vskip 3mm
Let $f \in S_k$ then $A_{z_0}f$ has period 1 and a Fourier expansion
$$
\left( A_{z_0}f \right)(z)=\sum_{m \in \Z} b_{z_0}(m) e^{2\pi i m z}.
$$
Put $w=\zeta^{2 z} = e^{2\pi i z/N}$ so that $e^{2\pi i  z}=w^{N}$ and
$$
\bigl(f|_k \sz\bigr)(w)= \sum_{m \in \Z} b_{z_0}(m)w^{Nm-k/2}.
$$
Since $\bigl( f|_k \sz\bigr)(w)$ is holomorphic at $w=0$ we must have non-negative powers
of $w$ in the above expansion.
Thus any $f \in S_k$ satisfies the following.
\begin{adef}
The elliptic expansion of $f$ in $S_k$ at $z_0$ is
\begin{equation}\label{ellexp}
   \left(f|_k \sz \right)(z)= \sum_{m \in \N \atop Nm-k/2 \geqslant 0}
b_{z_0}(m)z^{Nm-k/2}.
\end{equation}
\end{adef}
We show in Lemma \ref{elliptic} that $f|_k \varepsilon = f$ for all $f$ in $S_k^n$ also.
Thus higher-order cusp forms also have the elliptic expansion (\ref{ellexp}). In some
situations the exponent $Nm-k/2$ is a little awkward and we write
\begin{equation}\label{ellexp2}
   \left(f|_k \sz \right)(z)= \sum_{l \in \N_0} c_{z_0}(l)z^{l}
\end{equation}
instead, where
\begin{equation}\label{ellexp3}
   c_{z_0}(l)= \begin{cases} b_{z_0}\left( \frac{l+k/2}{N} \right) & \text{ if } l \equiv
-k/2 \mod N  \\
0 & \text{ if } l \not\equiv -k/2 \mod N. \end{cases}
\end{equation}

\subsection{}\label{this}

Next define the elliptic Poincar\'e series
\begin{eqnarray}
    \Phi_{\text{\rm Ell}}(z,m,z_0)
      :=  P[ A_{z_0}^{-1} e^{2\pi i m \cdot}](z) & = & \sum_{\g \in \G_{z_0} \backslash \G}
\left(\left( A_{z_0}^{-1} e^{2\pi i m \cdot}\right)|_k \g \right) (z) \label{ellpoin}\\
& = & (2i \beta)^{k/2}\sum_{\g \in  \G_{z_0} \backslash \G}
    \frac{(\sz^{-1} \g z)^{Nm-k/2}}
    {j(\sz^{-1} \g,z)^{k}}. \nonumber
\end{eqnarray}

\begin{prop} \label{ellinc}
For $4\leqslant k \in 2\Z$ and $k/(2N) \leqslant m \in \N$ we have $\Phi_{\text{\rm
Ell}}(z,m,z_0) \in S_k$.
\end{prop}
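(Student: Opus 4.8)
The plan is to apply Theorem~\ref{relpoin} with $\G_0 = \G_{z_0}$ and $\phi = A_{z_0}^{-1} e^{2\pi i m \cdot}$, exactly mirroring the proofs of Propositions~\ref{parinc} and~\ref{hypinc}. By Lemma~\ref{ellperiodic}, the function $e^{2\pi i m \cdot}$ has period $1$, so $\phi$ satisfies $\phi|_k \varepsilon = \phi$ and hence $\phi|_k \g = \phi$ for all $\g \in \G_{z_0}$. Thus the only thing to check is the integrability condition~(\ref{check}): that $\int_{\G_{z_0} \backslash \H} |\phi(z)| y^{k/2} \, d\mu z < \infty$.

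First I would unfold this integral over the fundamental domain $\sz(\fun_{z_0})$ for $\G_{z_0} \backslash \H$, substituting $w = \sz^{-1} z$ so the domain becomes the circular sector $\fun_{z_0}$ in the disc $\D_1$, parametrized by $w = r e^{i\theta}$ with $0 \leqslant r < 1$ and $\theta$ ranging over an interval of length $2\pi/N$ about $\pi$. Using the definition~(\ref{supz}) of $A_{z_0}^{-1}$, the integrand $|\phi(\sz w)| \Im(\sz w)^{k/2}$ collapses — just as in Proposition~\ref{hypinc} — to $|w^{Nm-k/2}| \Im(w)^{k/2}$ after the Jacobian factors cancel, up to the constant $(2\beta)^{k/2}$ coming from $\det \sz$. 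With $\Im(w) = r \sin\theta$ and $d\mu w = r^{-2}\sin^{-2}\theta \, dr\, d\theta$ (since $y = r\sin\theta$ in these coordinates), the integral becomes, up to constants,
$$
\int_0^1 r^{Nm - k/2} (r\sin\theta)^{k/2}\, r^{-1} (\sin\theta)^{-2}\, dr\, d\theta
= \left(\int_0^1 r^{Nm - 1}\, dr\right)\left(\int (\sin\theta)^{k/2 - 2}\, d\theta\right),
$$
where the $\theta$-integral is over the bounded sector and is finite (the exponent $k/2-2 \geqslant 0$ since $k \geqslant 4$), and the $r$-integral converges precisely when $Nm > 0$, which holds since $m \in \N$. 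The hypothesis $m \geqslant k/(2N)$ guarantees $Nm - k/2 \geqslant 0$, so that $w^{Nm-k/2}$ is holomorphic across $w = 0$ and $\phi$ is genuinely holomorphic on $\H$ (this is the same condition appearing in~(\ref{ellexp})); without it the Taylor expansion would have a negative power and $\phi$ would fail to be holomorphic, though the integral itself would still converge.

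The computation is entirely routine — it is the elliptic analog of the hyperbolic case, with the half-line $1 \leqslant r < \xi^2$ replaced by the segment $0 \leqslant r < 1$ — so there is no real obstacle. The one point requiring a moment's care is the bookkeeping of the determinant factor $\det(\sz) = 1/(2i\beta)$ in the normalized stroke operator: one must verify that $|\phi(\sz w)| \cdot |j(\sz,w)|^{-k} \cdot y^{k/2}$ simplifies correctly, but this is handled by the normalization convention introduced just before~(\ref{supz}) and produces only the harmless constant $(2\beta)^{k/2}$. Once the integral is seen to be finite, Theorem~\ref{relpoin} immediately gives $\Phi_{\text{\rm Ell}}(z,m,z_0) = P[\phi](z) \in S_k$, with absolute and uniform convergence on compact subsets of $\H$.
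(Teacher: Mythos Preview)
Your overall strategy is the same as the paper's: apply Theorem~\ref{relpoin} with $\G_0=\G_{z_0}$ and $\phi=A_{z_0}^{-1}e^{2\pi i m\cdot}$, and verify~(\ref{check}). However, the explicit change of variables you carry out is wrong, and the error is conceptual, not just a slip.

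You claim that after substituting $w=\sz^{-1}z$ the integrand becomes $|w^{Nm-k/2}|\,\Im(w)^{k/2}$ against $d\mu w$, ``just as in Proposition~\ref{hypinc}''. But the hyperbolic and elliptic cases differ precisely here: $\se\in\SL_2(\R)$ is an isometry of $\H$, so $d\mu(\se w)=d\mu w$ and $\Im(\se w)=\Im(w)/|j(\se,w)|^2$; by contrast $\sz\in\GL_2(\C)$ maps the unit disc $\D_1$ to $\H$, so $w$ lives in $\D_1$, not in $\H$. The quantity $\Im(w)=r\sin\theta$ is not positive on your sector (indeed it vanishes at $\theta=\pi$ and is negative for $\theta>\pi$), and the hyperbolic measure $d\mu w=du\,dv/v^2$ has no meaning there. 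The correct formulas, which the paper records as~(\ref{yyy}) and surrounding lines, are
\[
y=\Im(\sz w)=\beta\,\frac{1-|w|^2}{|1-w|^2},\qquad d\mu z=\frac{4\,du\,dv}{(1-|w|^2)^2},
\]
and after the cancellations one obtains (up to a constant)
\[
\int_{\fun_{z_0}}|w|^{Nm-k/2}\bigl(1-|w|^2\bigr)^{k/2-2}\,du\,dv
=\frac{2\pi}{N}\int_0^1 r^{Nm-k/2+1}\bigl(1-r^2\bigr)^{k/2-2}\,dr,
\]
not your $\int_0^1 r^{Nm-1}\,dr\cdot\int(\sin\theta)^{k/2-2}\,d\theta$. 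The genuine integral is finite for $k>2$ and $Nm\geqslant k/2$, so the conclusion survives, but your displayed computation does not establish it. Redo the substitution using the actual geometry of $\sz^{-1}:\H\to\D_1$.
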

\begin{proof}
Recalling Theorem \ref{relpoin} we need to verify that
\begin{equation} \label{qwe}
    \int_{\G_{z_0} \backslash \H} |\phi(z)|y^{k/2} \, d\mu z =
(2\beta)^{k/2}\int_{\G_{z_0} \backslash \H} \left|\frac{(\sz^{-1} z)^{Nm-k/2}}{j(\sz^{-1}
,z)^k}\right| y^{k/2} \, d\mu z
\end{equation}
is bounded. Put $w=u+iv=\sz^{-1} z$  and recall that $z_0 = \alpha+i\beta$,  $z=x+iy$. We
calculate
\begin{eqnarray}
    |j(\sz^{-1} ,\sz w)|^{-2} & =& |j(\sz ,w)|^{2} = (2 \beta)^{-2} |1-w|^{2} \nonumber\\
    y & =& \beta \frac{1-|w|^2}{|1-w|^2} \label{yyy}\\
    \left| \frac{\partial (x,y)}{\partial (u,v)}\right| & =& \frac{4 \beta^2}{|1-w|^4}
\nonumber
\end{eqnarray}
and see that (\ref{qwe}) equals
$$
2^{2-k/2} \int_{\fun_{z_0} } |w|^{Nm-k/2} \bigl( 1-|w|^2 \bigr)^{k/2-2} \, du dv
= 2^{2-k/2} \int_0^1 \int_{\pi -\pi/N}^{\pi +\pi/N}  r^{Nm-k/2+1} \bigl( 1-r^2
\bigr)^{k/2-2} \, d\theta dr.
$$
This is bounded for $k>2$ and, by Theorem \ref{relpoin}, $\Phi_{\text{\rm
Ell}}(z,m,z_0) \in S_k$.
\end{proof}

As in the parabolic and hyperbolic cases, the elliptic Poincar\'e series can be used to
determine the elliptic expansion coefficients. We follow closely the reasoning in
\cite{peter} and first prove the following.
\begin{lemma} \label{lemab}
For any integers $a,b \geqslant 0$ and $k \geqslant 2$,
\begin{equation}\label{hypint}
\int_\H \frac{(\sz^{-1} z)^a (\overline{\sz^{-1} z})^b}{|j(\sz^{-1} ,z)|^{2k} }\, y^k \,
d\mu z =
\begin{cases} \displaystyle \frac{4\pi (k-2)! a!}{(4\beta)^k (a+k-1)!} & \text{if $a =
b$}\\ 0 & \text{if $a \neq b$.} \end{cases}
\end{equation}
\end{lemma}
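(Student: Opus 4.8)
The plan is to reduce the integral over all of $\H$ to an explicit integral over the unit disc $\D_1$ via the change of variables $w = \sz^{-1} z$, exactly as was done inside the proof of Proposition \ref{ellinc}. First I would substitute $w = u+iv = \sz^{-1} z$ and invoke the three identities already recorded in (\ref{yyy}): namely $|j(\sz^{-1},\sz w)|^{-2} = (2\beta)^{-2}|1-w|^2$, then $y = \beta(1-|w|^2)/|1-w|^2$, and the Jacobian $|\partial(x,y)/\partial(u,v)| = 4\beta^2/|1-w|^4$. Since $\sz^{-1}$ maps $\H$ homeomorphically onto $\D_1$, the domain of the new integral is exactly $\D_1$. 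Collecting the powers of $|1-w|$: the factor $(\sz^{-1}z)^a(\overline{\sz^{-1}z})^b/|j(\sz^{-1},z)|^{2k}$ contributes $w^a \overline{w}^b (2\beta)^{-2k}|1-w|^{2k}$, the $y^k$ contributes $\beta^k (1-|w|^2)^k |1-w|^{-2k}$, the $d\mu z = dx\,dy/y^2$ contributes an extra $y^{-2} = \beta^{-2}|1-w|^4 (1-|w|^2)^{-2}$, and the Jacobian contributes $4\beta^2 |1-w|^{-4}$. All powers of $|1-w|$ cancel, leaving
\begin{equation*}
\int_\H \frac{(\sz^{-1} z)^a (\overline{\sz^{-1} z})^b}{|j(\sz^{-1} ,z)|^{2k}}\, y^k \, d\mu z = \frac{4}{(4\beta)^{k}} \int_{\D_1} w^a \overline{w}^b (1-|w|^2)^{k-2}\, du\, dv.
\end{equation*}

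Next I would pass to polar coordinates $w = re^{i\theta}$, $0 \leqslant r < 1$, $0 \leqslant \theta < 2\pi$, so $du\,dv = r\,dr\,d\theta$ and $w^a\overline{w}^b = r^{a+b} e^{i(a-b)\theta}$. The angular integral $\int_0^{2\pi} e^{i(a-b)\theta}\,d\theta$ is $2\pi$ when $a=b$ and $0$ otherwise, which gives the vanishing in the case $a\neq b$ immediately. When $a=b$ the remaining radial integral is $\int_0^1 r^{2a+1}(1-r^2)^{k-2}\,dr$, which after the substitution $t = r^2$ becomes $\tfrac12 \int_0^1 t^a (1-t)^{k-2}\,dt = \tfrac12 B(a+1,k-1) = \tfrac12 \cdot \tfrac{a!\,(k-2)!}{(a+k-1)!}$, using the standard Beta-function evaluation (valid since $k \geqslant 2$ and $a \geqslant 0$ ensure convergence). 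Assembling the constants $\tfrac{4}{(4\beta)^k}\cdot 2\pi \cdot \tfrac12 \cdot \tfrac{a!\,(k-2)!}{(a+k-1)!}$ yields exactly $\tfrac{4\pi(k-2)!\,a!}{(4\beta)^k(a+k-1)!}$, as claimed.

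There is no serious obstacle here: the computation is entirely mechanical once one has the transformation formulas (\ref{yyy}), and the only points requiring a word of care are that the cancellation of the $|1-w|$ factors is complete (so no singularity survives at $w=1$, i.e. at the cusp direction) and that the Beta integral converges, which holds precisely because $k \geqslant 2$. If I wanted to be careful about absolute convergence justifying the change of variables in the first place, I would note that the integrand is non-negative after taking absolute values and that the disc integral $\int_{\D_1}|w|^{a+b}(1-|w|^2)^{k-2}\,du\,dv$ is manifestly finite for $k \geqslant 2$, so Tonelli applies throughout.
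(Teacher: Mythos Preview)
Your proof is correct and follows essentially the same route as the paper: change variables to the disc via $w=\sz^{-1}z$ using the identities (\ref{yyy}), reduce to $\tfrac{4}{(4\beta)^k}\int_{\D_1} w^a\overline{w}^b(1-|w|^2)^{k-2}\,du\,dv$, and then pass to polar coordinates. The only cosmetic difference is that the paper evaluates the radial integral $\int_0^1 r^{2a+1}(1-r^2)^{k-2}\,dr$ by repeated integration by parts rather than by recognizing it as a Beta integral after the substitution $t=r^2$.
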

\begin{proof}
 With $w=\sz^{-1} z$, the integral in (\ref{hypint}) becomes
\begin{equation*}
    \int_{\D_1} \frac{(w)^a (\overline{w})^b}{|j(\sz^{-1} ,\sz w)|^{2k} }\, y^{k-2} \left|
\frac{\partial (x,y)}{\partial (u,v)}\right|  \, du dv.
\end{equation*}
As in the proof of Proposition \ref{ellinc} this reduces to
\begin{equation}\label{hypint3}
\frac{4}{(4\beta)^k}\int_{\D_1} (w)^a (\overline{w})^b  \bigl(1-|w|^2\bigr)^{k-2}  \, du
dv.
\end{equation}
Let $w=r e^{i\theta}$ and (\ref{hypint3}) becomes
$$
\frac{4}{(4\beta)^k}\int_0^1 \int_0^{2\pi}  r^{a+b+1} (1-r^2)^{k-2} e^{i\theta(a-b)} \,
d\theta dr.
$$
Of course
$$
\int_0^{2\pi}   e^{i\theta(a-b)} \, d\theta = \begin{cases} 2\pi & \text{if $a = b$}\\ 0 &
\text{if $a \neq b$,}  \end{cases}
$$
and, on repeated integration by parts,
$$
\int_0^1  r^{2a+1} (1-r^2)^{k-2}  \,   dr = \frac{(k-2)! a!}{2 (a+k-1)!}.
$$
Reassemble these pieces to complete the proof.
\end{proof}

A similar proof (integrating over $\fun_{z_0}$ instead of $\D_1$) shows

\begin{lemma} \label{lemab2}
For any integers $Nl-k/2, Nm-k/2 \geqslant 0$ and $k \geqslant 2$,
\begin{equation}\label{hypint2}
\int_{\G_{z_0} \backslash \H} \frac{(\sz^{-1} z)^{Nl-k/2} (\overline{\sz^{-1}
z})^{Nm-k/2}}{|j(\sz^{-1} ,z)|^{2k} }\, y^k \, d\mu z =
\begin{cases} \displaystyle \frac{4\pi (k-2)! (Nm-k/2)!}{N(4\beta)^k (Nm+k/2-1)!} &
\text{if $l = m$}\\ 0 & \text{if $l \neq m$}. \end{cases}
\end{equation}
\end{lemma}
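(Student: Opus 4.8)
The plan is to transcribe the proof of Lemma~\ref{lemab} almost verbatim, the sole change being that the domain of integration is the fundamental sector $\fun_{z_0}$ rather than the full disc $\D_1$. First I would make the substitution $w=u+iv=\sz^{-1}z$, which carries the fundamental domain $\sz(\fun_{z_0})$ for $\G_{z_0}\backslash\H$ onto $\fun_{z_0}\subset\D_1$. Invoking the three identities recorded in (\ref{yyy})—namely $|j(\sz^{-1},\sz w)|^{-2}=(2\beta)^{-2}|1-w|^2$, $y=\beta(1-|w|^2)/|1-w|^2$, and $|\partial(x,y)/\partial(u,v)|=4\beta^2/|1-w|^4$—all powers of $|1-w|$ cancel exactly as in Lemma~\ref{lemab}, so that, writing $a=Nl-k/2$ and $b=Nm-k/2$ (nonnegative integers by hypothesis), the left-hand side of (\ref{hypint2}) becomes
$$
\frac{4}{(4\beta)^k}\int_{\fun_{z_0}}w^a\overline{w}^b\bigl(1-|w|^2\bigr)^{k-2}\,du\,dv.
$$

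Next I would pass to polar coordinates $w=re^{i\theta}$. Since $\fun_{z_0}$ is the sector $0\leqslant r<1$, $\pi-\pi/N\leqslant\theta\leqslant\pi+\pi/N$, the integral factors as
$$
\frac{4}{(4\beta)^k}\int_0^1 r^{a+b+1}(1-r^2)^{k-2}\,dr\,\int_{\pi-\pi/N}^{\pi+\pi/N}e^{i\theta(a-b)}\,d\theta .
$$
The only place where the sector—rather than the full circle of Lemma~\ref{lemab}—matters is the angular factor. Here $a-b=N(l-m)$ is an integer multiple of $N$, so $\theta\mapsto e^{i\theta(a-b)}$ has period $2\pi/N$; integrating it over an interval of length exactly $2\pi/N$ gives $2\pi/N$ when $l=m$, and a one-line computation (the integral evaluates to a constant times $\sin$ of an integer multiple of $\pi$) gives $0$ when $l\neq m$. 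This produces the dichotomy in (\ref{hypint2}), with the extra factor $1/N$ relative to Lemma~\ref{lemab}.

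Finally, in the case $l=m$ the radial integral is $\int_0^1 r^{2(Nm-k/2)+1}(1-r^2)^{k-2}\,dr$, which by the substitution $t=r^2$ and repeated integration by parts (equivalently, the Beta integral) equals $\frac{(k-2)!\,(Nm-k/2)!}{2\,(Nm+k/2-1)!}$, using $Nm-k/2\geqslant 0$ and $k\geqslant 2$. Assembling the three factors $4/(4\beta)^k$, $2\pi/N$, and this radial value yields the claimed expression. I do not expect any genuine obstacle: every step is a copy of the argument for Lemma~\ref{lemab}, and the slightly restricted range of $\theta$ is compensated precisely by the fact that the exponent $a-b$ is now divisible by $N$.
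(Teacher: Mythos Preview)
Your proposal is correct and is exactly the argument the paper intends: it explicitly states that Lemma~\ref{lemab2} follows by ``a similar proof (integrating over $\fun_{z_0}$ instead of $\D_1$)'' to Lemma~\ref{lemab}. Your key observation that $a-b=N(l-m)$ makes the angular integral over the length-$2\pi/N$ sector vanish for $l\neq m$ is precisely the point that replaces the full-circle orthogonality in Lemma~\ref{lemab}.
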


\begin{prop} \label{eorth}
For $f \in S_k$ satisfying (\ref{ellexp}) and $Nm-k/2 \geqslant 0$ we have
$$
\s{f}{\Phi_{\text{\rm Ell}}(\cdot ,m,z_0)} =  b_{z_0}(m) \left[\frac{ \pi (k-2)!
(Nm-k/2)!}{ 2^{k-2} N (Nm+k/2-1)!}\right].
$$
\end{prop}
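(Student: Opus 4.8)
The plan is to imitate the unfolding arguments that prove Propositions \ref{porth} and \ref{horth}, now using Lemma \ref{lemab2} in place of the parabolic/hyperbolic integral evaluations. Write $\phi := A_{z_0}^{-1}e^{2\pi i m\cdot}$, so by (\ref{ellpoin}) we have $\phi(z) = (2i\beta)^{k/2}(\sz^{-1}z)^{Nm-k/2}j(\sz^{-1},z)^{-k}$ and $\Phi_{\text{\rm Ell}}(\cdot,m,z_0) = P[\phi] = \sum_{\g\in\G_{z_0}\backslash\G}(\phi|_k\g)$. Starting from the definition (\ref{petinn}) of the Petersson product, and using the $\G$-invariance of $f(z)\overline{(\phi|_k\g)(z)}\,y^k\,d\mu z$ in the way done in the proof of Proposition \ref{horth}, I would unfold
\begin{equation*}
\s{f}{\Phi_{\text{\rm Ell}}(\cdot,m,z_0)} = \int_{\G_{z_0}\backslash\H} f(z)\,\overline{\phi(z)}\,y^k\,d\mu z.
\end{equation*}

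Next I would insert the elliptic expansion (\ref{ellexp}) of $f$. Since $\det(\sz)=1/(2i\beta)$ and the cocycle relation gives $j(\sz,\sz^{-1}z)=j(\sz^{-1},z)^{-1}$, evaluating $(f|_k\sz)$ at $w=\sz^{-1}z$ yields
\begin{equation*}
f(z) = (2i\beta)^{k/2}\,j(\sz^{-1},z)^{-k}\sum_{l}\, b_{z_0}(l)\,(\sz^{-1}z)^{Nl-k/2},
\end{equation*}
the sum over $l$ with $Nl-k/2\geqslant 0$. Multiplying by $\overline{\phi(z)}y^k$ and using $|2i\beta|^k=(2\beta)^k$, the integrand becomes
\begin{equation*}
(2\beta)^k\,y^k\,|j(\sz^{-1},z)|^{-2k}\sum_{l}\, b_{z_0}(l)\,(\sz^{-1}z)^{Nl-k/2}\,\overline{(\sz^{-1}z)^{Nm-k/2}}.
\end{equation*}
Interchanging the sum with the integral over $\G_{z_0}\backslash\H$ and applying Lemma \ref{lemab2} term by term kills every $l\neq m$ and leaves
\begin{equation*}
\s{f}{\Phi_{\text{\rm Ell}}(\cdot,m,z_0)} = (2\beta)^k\,b_{z_0}(m)\cdot\frac{4\pi(k-2)!(Nm-k/2)!}{N(4\beta)^k(Nm+k/2-1)!}.
\end{equation*}
Since $(2\beta)^k/(4\beta)^k = 2^{-k}$ and $4\pi\cdot 2^{-k}=\pi\cdot 2^{-(k-2)}$, this is exactly the asserted value.

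The genuinely delicate points are the two interchanges: unfolding the Poincar\'e series against $f$, and summing the Fourier--Taylor expansion of $f$ inside the integral. Both are justified by the absolute and locally uniform convergence established in Proposition \ref{ellinc}, together with the fact that $f\in S_k$ is holomorphic — hence bounded — near $z_0$ while the fundamental domain $\sz(\fun_{z_0})$ lies in a bounded region of $\H$ by (\ref{bndedreg}); these make $\int_{\G_{z_0}\backslash\H}|f(z)||\phi(z)|\,y^k\,d\mu z$, and the corresponding termwise double integral, absolutely convergent. The one place an arithmetic slip could occur is the bookkeeping of the $\det(\sz)^{k/2}=(2i\beta)^{-k/2}$ and $(2i\beta)^{k/2}$ factors, but their moduli combine cleanly to the single constant $(2\beta)^k$ as above, so this is the main (and only mild) obstacle.
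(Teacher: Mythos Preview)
Your proposal is correct and follows essentially the same route as the paper: unfold the inner product to $\G_{z_0}\backslash\H$, insert the elliptic expansion (\ref{ellexp}) of $f$, and apply Lemma \ref{lemab2} termwise to pick out the $l=m$ coefficient, with the $(2\beta)^k/(4\beta)^k=2^{-k}$ bookkeeping exactly as you describe. Your added remarks on justifying the two interchanges are a welcome supplement to the paper's more terse presentation.
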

\begin{proof}
Unfold the inner product as in Proposition \ref{horth}:
\begin{eqnarray*}
    \s{f}{\Phi_{\text{\rm Ell}}( \cdot ,m,z_0)} & = & \int_{\G \backslash \H} y^k f(z)
\overline{\Phi_{\text{\rm Ell}}(z,m,z_0)} \, d\mu z\\
& = & \int_{\G_{z_0} \backslash \H}    f(z)
    \overline{ \left(A_{z_0}^{-1} e^{2\pi i m \cdot} \right)(z)} y^k \, d\mu z\\
& = & (2 \beta)^{k}\int_{\G_{z_0} \backslash \H}   \left( \frac{\sum_{l \in \Z}
b_{z_0}(l)(\sz^{-1} z)^{Nl-k/2}}{j(\sz^{-1} ,z)^{k}} \right)
    \frac{(\overline{\sz^{-1}  z})^{Nm-k/2}}
    {\overline{j(\sz^{-1} ,z)}^{k}} y^k \, d\mu z\\
& = &  (2 \beta)^{k} \sum_{l \in \Z} b_{z_0}(l) \int_{\G_{z_0} \backslash \H}
\frac{(\sz^{-1} z)^{Nl-k/2}(\overline{\sz^{-1}  z})^{Nm-k/2}}{|j(\sz^{-1} ,z)|^{2k}}
    y^k \, d\mu z\\
& = &  b_{z_0}(m) \left[\frac{ \pi (k-2)! (Nm-k/2)!}{ 2^{k-2} N (Nm+k/2-1)!}\right].
\end{eqnarray*}
\end{proof}

\subsection{}
Petersson in \cite{peter} defined the elliptic Poincar\'e series slightly differently as, essentially,
$$
\Phi^*_{\text{\rm Ell}}(z,l,z_0)
       :=(2 i \beta)^{k/2} \sum_{\g \in   \G}
    \frac{(\sz^{-1} \g z)^{l}}
    {j(\sz^{-1} \g,z)^{k}}
$$
for $l \in \N_0$. Notice that the sum is now over all elements of $\G$. With the results
of section \ref{this} and recalling (\ref{ellexp2}) we see that
$$
\s{f}{\Phi^*_{\text{\rm Ell}}(\cdot ,l,z_0)} =  c_{z_0}(l) \left[\frac{\pi (k-2)!
m!}{2^{k-2} (m+k-1)!}\right].
$$
Thus (\ref{ellexp3}) implies that $\Phi^*_{\text{\rm Ell}}(\cdot ,l,z_0)$ is orthogonal to
all of $S_k$, and hence zero, unless $l \equiv -k/2 \mod N$. For such $l$
\begin{eqnarray*}
  \Phi^*_{\text{\rm Ell}}(z ,l,z_0) &=&  (2 i \beta)^{k/2} \sum_{\g \in  \G \backslash
\G_{z_0}} \sum_{\varepsilon \in   \G_{z_0}}
    \frac{(\sz^{-1} \g \varepsilon z)^{l}}
    {j(\sz^{-1} \g \varepsilon,z)^{k}} \\
   &=& \sum_{i=0}^{N-1} \left( \Phi_{\text{\rm Ell}}(\cdot ,l,z_0)|_k \varepsilon^i
\right) (z) \\
   &=& N \cdot \Phi_{\text{\rm Ell}}(z,l,z_0).
\end{eqnarray*}
For $l \in \N_0$ we have shown
$$
\Phi^*_{\text{\rm Ell}}(z,l,z_0)= \begin{cases} N \cdot \Phi_{\text{\rm
Ell}}(z,(l+k/2)/N,z_0) & \text{ if } l \equiv -k/2 \mod N  \\
0 & \text{ if } l \not\equiv -k/2 \mod N. \end{cases}
$$

\section{Relative Poincar\'e Series} \label{se}

We now give the proof of the relative Poincar\'e series construction, Theorem
\ref{relpoin}.

\begin{thm1}
Let $\G_0$ be a subgroup of $\G$ and  $\phi$  a holomorphic function on $\H$ satisfying
$\phi|_k \g = \phi$ for all $\g$ in $\G_0$ and
\begin{equation*}
\int_{\G_0 \backslash \H} |\phi(z)|y^{k/2} \, d\mu z < \infty.
\end{equation*}
Then the relative Poincar\'e series
$$
P[\phi ](z):= \sum_{\g \in \G_0 \backslash \G} (\phi|_k \g)(z)
$$
converges absolutely and uniformly on compact subsets of $\H$ to an element of $S_k$.
\end{thm1}
\begin{proof}
Define the open hyperbolic ball of center $z_0$ and radius $r$ as
$$
\B(z_0,r) := \{z\in \H : \rho(z,z_0)<r\}
$$
with hyperbolic distance $\rho(z,z_0)=\log\bigl( (1+|\sz^{-1} z|)/(1-|\sz^{-1} z|)\bigr)$.
Then
$$
\sz^{-1} \B(z_0,r) = \D_R
$$
for
$$
R=\frac{e^r-1}{e^r+1}, \ \ \ \D_R=\{z \in \C : |z|<R \}.
$$
Now a proof very similar to that of Lemma \ref{lemab} shows
\begin{equation}\label{hi1}
\int_{\B(z_0,r)} (\sz^{-1} z)^n \frac{y^{k/2}}{|z-\overline{z_0}|^k }\,  d\mu z =
\begin{cases} C_{r, k} \beta^{-k/2} & \text{if $n=0$}\\ 0 & \text{if $n \in \N$,}
\end{cases}
\end{equation}
for  $k\geqslant 2$, $\beta=\Im(z_0)$ and
$$
C_{r, k} = \frac{4\pi}{2^{k/2}(k-1)}\left( 1-(1-R^2)^{k-1} \right).
$$
Recall the expansion (\ref{simpleexp})
\begin{equation*}
    f(z)=\sum_{n \in \N_0} a_{z_0}(n) (\sz^{-1} z)^n.
\end{equation*}
and note that $a_{z_0}(0) =f(z_0)$. With (\ref{hi1}) we find
$$
a_{z_0}(0)C_{r, k} \beta^{-k/2} = \int_{\B(z_0,r)} f(z)
\frac{y^{k/2}}{|z-\overline{z_0}|^k }\,  d\mu z
$$
giving a type of hyperbolic holomorphic mean-value result (replace $z$ by $w$ and $z_0$ by
$z$):
$$
y^{-k/2}f(z)= \frac 1{C_{r, k}} \int_{\B(z,r)} f(w) \frac{\Im(w)^{k/2}}{|w-\overline{z}|^k
}\,  d\mu w
$$
valid for any $f$ holomorphic on $\B(z,r)$ with real $k \geqslant 2$.
Since $\Im(w)>0$ and $\Im(\overline{z})=-y$ we have $|w-\overline{z}|>y$ and
therefore
\begin{equation}\label{referen}
y^{k/2}|f(z)| \leqslant \frac 1{C_{r, k}} \int_{\B(z,r)} |f(w)| \Im(w)^{k/2}\,  d\mu w.
\end{equation}
We have $\g \B(z_0,r) = \B(\g z_0,r)$ for all $\g$ in $\G$.
As shown in \cite[Proposition 1.8]{Sh}, there exists an $r>0$ so that
$$
\B(\g z_0,r) \cap \B(\g' z_0,r) = \emptyset
$$
for all $\g \neq \g' \in \G$. With this choice of $r$ we see that
\begin{eqnarray*}
   y^{k/2}|P[\phi ](z)| = \left|y^{k/2} \sum_{\g \in \G_0 \backslash \G} (\phi|_k \g)(z)
\right| & \leqslant &
    \sum_{\g \in \G_0 \backslash \G} \Im(\g z)^{k/2} |\phi( \g z)|\\
    & \leqslant &
    \sum_{\g \in \G_0 \backslash \G} \frac 1{C_{r, k}} \int_{\B(\g z,r)} |\phi(w)|
\Im(w)^{k/2}\,  d\mu w\\
& \leqslant &
     \frac 1{C_{r, k}} \int_{\G_0 \backslash \H} |\phi(w)| \Im(w)^{k/2}\,  d\mu w
<\infty
\end{eqnarray*}
so $P[\phi ]$  converges absolutely and uniformly to a holomorphic function on $\H$.
At this point we may verify that $P[\phi ]|_k(\g-1)=0$ for all $\g \in \G$.
Also we have shown that $y^{k/2}|P[\phi ](z)|$ is bounded and it follows from this, see
\cite[p. 70]{Iw2} for example, that $P[\phi ]$ has rapid decay at each cusp. With all this
 $P[\phi ] \in S_k$ as we wanted to show.
\end{proof}

\section{An elliptic expansion example}
For $q=e^{2\pi i z}$, the discriminant function is
$$
\Delta(z) := q \prod_{n=1}^\infty (1-q^{n})^{24}.
$$
It generates the one-dimensional space $S_{12}(\G_0(1))$ for $\G_0(1) = \PSL_2(\Z)$. Its
parabolic expansion  at infinity is
\begin{equation}\label{tau}
\Delta(z) =\sum_{m=1}^\infty \tau(m) q^m
\end{equation}
and Ramanujan discovered that its  coefficients $\tau(m) \in \Z$ have many remarkable
properties.
The point $z_0 = i \in \H$ is an elliptic point for $\G_0(1)$. It is fixed by $\varepsilon
= \pm (\smallmatrix 0 & -1 \\ 1 & 0 \endsmallmatrix)$ with order $N=2$. Then, recalling
(\ref{ellexp2}), the elliptic expansion of $\Delta$ at $i$ is
\begin{equation}\label{elltau}
\bigl( \Delta|_{12} \sigma_i \bigr) (w) =  \sum_{m = 0}^\infty c_{i}(m)w^{m}.
\end{equation}
Do the coefficients $c_{i}(m)$ have any arithmetic properties? As Petersson realized in
\cite{peter}, it is possible to prove a general result relating the elliptic expansion
coefficients at a point to the Taylor coefficients there.

\begin{prop} \label{bim} For $f \in S_k$ with elliptic expansion (\ref{ellexp2}) at $z_0
\in \H$ we have
$$
c_{z_0}(m)= \sum_{r=0}^m \binom{m+k-1}{r+k-1} \frac{(z_0 - \overline{z_0})^{r+k/2}}{r!}
f^{(r)}(z_0).
$$
\end{prop}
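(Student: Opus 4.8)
The plan is to compare, near $w=0$, two power-series expansions of $f$ at $z_0$: the ordinary Taylor expansion, and the elliptic expansion $(f|_k \sz)(w)=\sum_{l\geqslant 0}c_{z_0}(l)w^l$ of (\ref{ellexp2}). The point is that the change of variable $z=\sz w$ is an explicit Möbius map, so the Taylor series of $f$ at $z_0$ can be rewritten cleanly in the variable $w$.

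First I would record the elementary data of $\sz$. From $\sz=\frac1{2i\beta}\bigl(\begin{smallmatrix}-\overline{z_0}&z_0\\-1&1\end{smallmatrix}\bigr)$ one reads off $\det\sz=1/(2i\beta)$ and $j(\sz,w)=(1-w)/(2i\beta)$, so the normalized stroke operator gives
$$(f|_k\sz)(w)=(2i\beta)^{k/2}\,\frac{f(\sz w)}{(1-w)^k}.$$
The key algebraic identity is that, writing $z=\sz w=(z_0-\overline{z_0}w)/(1-w)$, a one-line computation yields
$$z-z_0=\frac{(z_0-\overline{z_0})\,w}{1-w}.$$

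Next I would substitute the Taylor expansion of $f$ at $z_0$. This is legitimate for $|w|$ small, since then $|z-z_0|=2\beta|w|/|1-w|$ is less than $\beta=\Im z_0$, hence inside the disc of convergence of $f$ at $z_0$; we get
$$f(\sz w)=\sum_{r\geqslant 0}\frac{f^{(r)}(z_0)}{r!}(z-z_0)^r
=\sum_{r\geqslant 0}\frac{f^{(r)}(z_0)}{r!}(z_0-\overline{z_0})^r\frac{w^r}{(1-w)^r}.$$
Feeding this into the formula for $(f|_k\sz)(w)$ and expanding each factor as a binomial series, $(1-w)^{-(r+k)}=\sum_{j\geqslant 0}\binom{r+k+j-1}{r+k-1}w^j$, absolute convergence on a small disc about $w=0$ permits rearrangement, and the coefficient of $w^m$ (taking $j=m-r$, so $\binom{r+k+m-r-1}{r+k-1}=\binom{m+k-1}{r+k-1}$) is
$$c_{z_0}(m)=(2i\beta)^{k/2}\sum_{r=0}^m\binom{m+k-1}{r+k-1}\frac{(z_0-\overline{z_0})^r}{r!}f^{(r)}(z_0).$$
Finally $2i\beta=z_0-\overline{z_0}$, so $(2i\beta)^{k/2}(z_0-\overline{z_0})^r=(z_0-\overline{z_0})^{r+k/2}$, which is exactly the claimed formula; and since the elliptic expansion is a genuine power series in $w$, matching the coefficients near $w=0$ suffices.

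I do not expect a genuine obstacle here: the argument is a change of variables plus a binomial expansion. The only points needing care are (i) keeping the normalization of $|_k$ straight, because $\sz$ has the non-real determinant $1/(2i\beta)$, and (ii) justifying the interchange of the two summations, which is immediate from absolute convergence on a sufficiently small disc about $w=0$, this local identity then determining the expansion on all of $\D_1$.
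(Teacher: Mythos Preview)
Your proof is correct and follows essentially the same route as the paper's: expand $f$ in its Taylor series at $z_0$, rewrite $(f|_k\sz)(w)$ using the explicit M\"obius map (noting $z-z_0=(z_0-\overline{z_0})w/(1-w)$), apply the binomial expansion $(1-w)^{-(r+k)}=\sum_j\binom{r+k-1+j}{r+k-1}w^j$, and compare coefficients of $w^m$. The paper's argument is slightly terser but proceeds by exactly the same identities; your added remarks on the normalization of $|_k$ for $\det\sz\neq1$ and on absolute convergence are welcome points of care that the paper leaves implicit.
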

\begin{proof}
Since $f(z)$ is holomorphic in a neighborhood of $z_0$ it has a Taylor expansion
\begin{equation}\label{tay}
f(z)=f(z_0)+(z-z_0)f'(z_0)+(z-z_0)^2f''(z_0)/2!+ \cdots.
\end{equation}
With (\ref{ellexp2}) we have
\begin{equation}\label{abc}
\bigl( f|_k \sz \bigr) (w) = \frac{(2i\beta)^{k/2}}{(1-w)^{k}} f\left(
\frac{z_0-\overline{z_0} w}{1-w}\right)= \sum_{m = 0}^\infty c_{z_0}(m)w^{m}.
\end{equation}
Putting (\ref{tay}) and (\ref{abc}) together produces
\begin{equation}\label{abc2}
\sum_{m = 0}^\infty c_{z_0}(m)w^{m}= \sum_{j=0}^\infty
\frac{f^{(j)}(z_0)}{j!}(2i\beta)^{j+k/2} \frac{w^j}{(1-w)^{j+k}}.
\end{equation}
Use the well-known identity
\begin{equation}\label{comb}
    \frac 1{( 1-w )^{a}} = \sum_{l=0}^\infty \binom{a-1+l}{a-1} w^l
\end{equation}
for $a \in \N$ in (\ref{abc2}) and compare the coefficients of $w^m$ to complete the
proof.
\end{proof}

Applying the proposition to  $\Delta(z)$ at $z_0=i$ we find
\begin{eqnarray}
c_i(m) & = & \sum_{r=0}^m \binom{11+m}{11+r}  (2i)^{r+6} \Delta^{(r)}(i)/r!
\nonumber\\
& = & -2^{6} \sum_{r=0}^m \binom{11+m}{11+r} \sum_{n=1}^\infty \tau(n) e^{-2\pi n} (-4\pi
n)^r/r! \label{del}
\end{eqnarray}
It is clear from (\ref{del}) that $c_i(m) \in \R$ and from (\ref{ellexp3}) we know that
$c_i(m)=0$ for $m$ odd.
Evaluating the $c_i(m)$ numerically we have
$$
\bigl( \Delta|_{12} \sigma_i \bigr) (w) \approx  -0.114  +1.094 w^2 - 2.621 w^4
- 6.694 w^6 +37.787 w^8   + O(w^{10}).
$$
With the  Chowla-Selberg formula \cite[p. 110]{SeCh} we may recognize the first term as
$$
c_i(0) = -(4\pi)^{-6} \left( \frac{\G(1/4)}{\G(3/4)}\right)^{12}.
$$
We will return to this interesting topic in future work.

\section{The dimension of $S^n_k$} \label{sndord}

The first-order space $S_k$ is finite dimensional. See \cite[Theorem 2.24]{Sh} for an
exact formula for $\dim S_k$ in terms of the signature of $\G$.
We begin our study of higher-order forms, in this section, by showing that  $S^n_k$ being
finite dimensional follows from $\G$ being finitely generated.
  Suppose $\Gamma \backslash \H$ has genus $g,$ $r$ elliptic fixed
points
and $p$ cusps, then there are $2g$ hyperbolic elements $\gamma_i,$
$r$ elliptic elements $\varepsilon_i$ and $p$ parabolic elements
$\pi_i$ generating $\Gamma$ and satisfying the $r+1$
relations:
\begin{equation}\label{gens}
    [\g_1, \g_{g+1}]\dots
[\g_{g}, \g_{2g}]\varepsilon_1 \dots \varepsilon_r \pi_1 \dots
\pi_{p}=1, \qquad \varepsilon_j^{e_j}=1
\end{equation}
for $1 \leqslant j \leqslant r$ and integers $e_j \geqslant 2$ as in \cite[Proposition
2.6]{Iw}.
Here $[a, b]$ denotes the
commutator $aba^{-1}b^{-1}$ of $a, b$.

\begin{lemma} \label{elliptic}
For every elliptic element $\varepsilon$ in $\G$ and every $f$ in $S^n_k$ we have
$f|_k(\varepsilon -1)=0$.
\end{lemma}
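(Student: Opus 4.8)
The plan is to induct on the order $n$. The cases $n=0$ and $n=1$ are immediate: $S^0_k$ contains only the zero function, and $S^1_k=S_k$, for which (\ref{trans}) already forces $f|_k(\g-1)=0$ for every $\g\in\G$, elliptic ones included. So I assume $n\geqslant 2$ and that the statement is known for $S^{n-1}_k$.

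Fix $f\in S^n_k$ and an elliptic $\varepsilon\in\G$, and let $N>1$ be the order of the cyclic group it generates, so that $\varepsilon^{N}=I$ in $\G\subseteq\PSL_2(\R)$ (the relation $\varepsilon_j^{e_j}=1$ of (\ref{gens}); since the multiplier system is trivial and $k\in 2\Z$, there is no sign ambiguity when we apply $|_k$). By the definition of $S^n_k$, the function $g:=f|_k(\varepsilon-1)$ lies in $S^{n-1}_k$, and the inductive hypothesis applied to $g$ gives $g|_k(\varepsilon-1)=0$, that is $g|_k\varepsilon=g$, and hence $g|_k\varepsilon^{i}=g$ for all $i\geqslant 0$.

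Now telescope, using $(f|_k\alpha)|_k\beta=f|_k(\alpha\beta)$ and $f|_k I=f$:
\begin{align*}
f|_k\varepsilon^{j}-f &= \sum_{i=0}^{j-1}\bigl(f|_k\varepsilon^{i+1}-f|_k\varepsilon^{i}\bigr)\\
&= \sum_{i=0}^{j-1}\bigl(f|_k(\varepsilon-1)\bigr)|_k\varepsilon^{i}
 = \sum_{i=0}^{j-1} g|_k\varepsilon^{i} = j\,g
\end{align*}
for every $j\geqslant 0$. Taking $j=N$ and using $\varepsilon^{N}=I$ yields $f=f|_k\varepsilon^{N}=f+Ng$, so $Ng=0$ and therefore $g=0$, i.e. $f|_k(\varepsilon-1)=0$. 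This closes the induction.

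There is no genuinely hard step: the whole argument rests on $\varepsilon$ having finite order, which forces the ``increment'' $g$ in the identity $f|_k\varepsilon^{j}=f+jg$ to vanish. The only points that deserve a line of care are that $g$ really lands in $S^{n-1}_k$ (immediate from (\ref{trans2})) and that $\varepsilon^{N}=I$ in the group on which $|_k$ acts, which is exactly where the triviality of the multiplier system and $k\in 2\Z$ are used. By contrast, the same telescoping makes transparent why parabolic invariance had to be imposed separately in (\ref{trans3}): a parabolic element has infinite order, so nothing compels its increment to be zero.
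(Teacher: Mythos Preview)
Your proof is correct and follows essentially the same route as the paper's: both argue by induction on $n$, set $g=f|_k(\varepsilon-1)\in S^{n-1}_k$, use the inductive hypothesis to get $g|_k\varepsilon=g$, and then exploit the finite order of $\varepsilon$ to conclude $g=0$. The only cosmetic difference is the final step: you telescope $f|_k\varepsilon^{N}-f=Ng$ directly, while the paper uses $\varepsilon^{N+1}=\varepsilon$ and the factorization $\varepsilon^{N+1}-1=(\varepsilon-1)(\varepsilon^{N}+\cdots+1)$ to reach $g=(N+1)g$; both yield $Ng=0$.
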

\begin{proof}
We use an induction argument. The lemma is true in the case $n=1$ by definition. Now
suppose $f \in S^n_k$ for $n>1$. Let $g=f|_k (\varepsilon-1)$. Then $g \in S^{n-1}_k$
and, by induction, $g|_k (\varepsilon-1)=0$. Also
\begin{eqnarray*}
    g|_k(\varepsilon^i-1) & = & g|_k (\varepsilon-1)(\varepsilon^{i-1}+ \cdots +
\varepsilon^1 + 1) \\
    & = & \bigl(g|_k (\varepsilon-1)\bigr) |_k (\varepsilon^{i-1}+ \cdots + \varepsilon^1
+ 1) \\
    & = & 0
\end{eqnarray*}
 for every $i \geqslant 1$. If $\varepsilon$ has order $N$ then
\begin{eqnarray*}
    g & = & f|_k(\varepsilon -1) \\
    & = & f|_k(\varepsilon^{N+1} -1) \\
    & = & f|_k(\varepsilon -1)(\varepsilon^{N}+ \cdots + \varepsilon^2+ \varepsilon^1 + 1)
\\
    & = & g|_k(\varepsilon^{N}+ \cdots + \varepsilon^2+ \varepsilon^1 + 1) \\
    & = & \sum_{i=1}^N g|_k(\varepsilon^i -1) + (N+1)g \\
    & = &  (N+1)g.
\end{eqnarray*}
Hence $g=0$ and $f|_k(\varepsilon -1)=0$ as required.
\end{proof}

We noted in the introduction that $S^{n_1}_k \subseteq S^{n_2}_k$ for any two integers $0
\leqslant n_1  \leqslant n_2$.
So we may  consider the map
\begin{equation*}
\mathcal P_n: S^{n}_k /S^{n-1}_k \to \left( S^{n-1}_k /S^{n-2}_k \right)^{2g}
\end{equation*}
given by
\begin{equation}\label{pnmap2}
f \mapsto (f|_k (\g_1-1), f|_k (\g_2-1), \cdots ,f|_k (\g_{2g}-1))
\end{equation}
with the hyperbolic generators of (\ref{gens}).
\begin{lemma}
The map $\mathcal P_n$ is  well defined, linear and one-to-one.
\end{lemma}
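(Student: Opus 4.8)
The plan is to verify the three properties in turn, with the key structural observation being that modding out by $S^{n-2}_k$ is precisely what makes the images $f|_k(\g_i-1)$ well-defined modulo the right subspace. First I would check that $\mathcal P_n$ is \emph{well defined}: if $f \in S^n_k$ then by the defining relation $(\ref{trans2})$ we have $f|_k(\g_i-1) \in S^{n-1}_k$ for each hyperbolic generator $\g_i$, so the tuple lands in $(S^{n-1}_k)^{2g}$ and hence gives a class in $(S^{n-1}_k/S^{n-2}_k)^{2g}$. Moreover, if $f \in S^{n-1}_k$ then $f|_k(\g_i-1)\in S^{n-2}_k$, so the class of the tuple is zero; thus the map descends to the quotient $S^n_k/S^{n-1}_k$. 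Linearity is immediate since $f \mapsto f|_k(\g_i-1)$ is linear on $\C[\PSL_2(\R)]$ by the linear extension of the stroke operator, and passing to quotients preserves linearity.

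The substantive part is \emph{injectivity}. Suppose $f \in S^n_k$ has $f|_k(\g_i-1) \in S^{n-2}_k$ for every hyperbolic generator $\g_i$, $1 \leqslant i \leqslant 2g$; I must show $f \in S^{n-1}_k$, i.e.\ that $f|_k(\g-1) \in S^{n-2}_k$ for \emph{all} $\g \in \G$. The strategy is to show the set $H := \{\g \in \G : f|_k(\g-1) \in S^{n-2}_k\}$ contains a generating set of $\G$ and then argue it is all of $\G$ via a cocycle/multiplicativity argument. By Lemma \ref{elliptic}, for every elliptic element $\varepsilon$ we even have $f|_k(\varepsilon-1)=0$, so all the elliptic generators $\varepsilon_j$ of $(\ref{gens})$ lie in $H$; by the requirement $(\ref{trans3})$ in the definition of $S^n_k$, every parabolic element $\pi$ has $f|_k(\pi-1)=0$, so the parabolic generators $\pi_i$ lie in $H$; and the hyperbolic generators $\g_i$ lie in $H$ by hypothesis. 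So all the generators in $(\ref{gens})$ lie in $H$.

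It then remains to show $H=\G$. The clean way is to observe that the map $\g \mapsto f|_k(\g-1)$, viewed in $S^{n-1}_k/S^{n-2}_k$, is a cocycle: using the identity $f|_k(\g\delta-1) = f|_k(\g-1)|_k\delta + f|_k(\delta-1)$ (which follows from $\g\delta-1 = (\g-1)\delta + (\delta-1)$ in the group algebra) and the fact that $S^{n-2}_k$ is stable under $|_k\delta$, one gets that modulo $S^{n-2}_k$ the assignment $c(\g) := f|_k(\g-1)$ satisfies $c(\g\delta) = c(\g)|_k\delta + c(\delta)$. Since $c$ vanishes on all the generators and the generators generate $\G$, an easy induction on word length in the generators shows $c(\g)=0$ in $S^{n-1}_k/S^{n-2}_k$ for all $\g \in \G$, i.e.\ $f|_k(\g-1)\in S^{n-2}_k$ for all $\g$; note one also needs $c(\g^{-1}) = -c(\g)|_k\g^{-1}$, which follows from $c(1)=0$ and the cocycle relation. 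Hence $f \in S^{n-1}_k$, proving injectivity. The main obstacle I anticipate is being careful that the induction genuinely only uses the cocycle relation and generator-vanishing (and does \emph{not} secretly require the relations $(\ref{gens})$): the relations are automatically respected because $c$ is built from the honest function $f$, so no consistency check on relators is needed — but one should state this cleanly rather than wave at it.
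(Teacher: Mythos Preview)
Your proposal is correct and follows essentially the same approach as the paper: well-definedness via the quotient, linearity as immediate, and injectivity by checking that $f|_k(\g-1)\in S^{n-2}_k$ on all generators (hyperbolic by hypothesis, parabolic by (\ref{trans3}), elliptic by Lemma \ref{elliptic}) and then extending to all of $\G$. The paper simply asserts the extension step with a ``Therefore'', while you spell it out via the cocycle identity $f|_k(\g\delta-1)=(f|_k(\g-1))|_k\delta+f|_k(\delta-1)$ and stability of $S^{n-2}_k$ under $|_k\delta$; this is a welcome clarification rather than a different argument.
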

\begin{proof}
To see that $\mathcal P_n$ is well defined we note that if $f, g \in S^{n}_k$ represent
the same element in $S^{n}_k /S^{n-1}_k$ then $f-g=h$ for $h \in S^{n-1}_k$. Hence $f|_k
(\g_i-1) - g|_k (\g_i-1) = h|_k (\g_i-1) \in S^{n-2}_k$ and each component of $\mathcal
P_n$ is well defined.
The map $\mathcal P_n$ is clearly linear. To show it is one-to-one we examine $\ker
(\mathcal P_n)$. If $f \in \ker (\mathcal P_n)$ then $f|_k (\g_i-1)  \in S^{n-2}_k$ for
every hyperbolic generator $\g_i$. By definition $f|_k (\pi_i-1)  =0$ for all parabolic
generators and, with Lemma \ref{elliptic}, $f|_k (\varepsilon_i-1)  =0$ for all elliptic
generators. Therefore
$f|_k (\g-1)  \in S^{n-2}_k$ for all $\g \in \G$ and $f  \in S^{n-1}_k$. Thus $\ker
(\mathcal P_n)=0$ in $S^{n}_k /S^{n-1}_k$ and the map is one-to-one.
\end{proof}

We see then that
$$
\dim(S^{n}_k /S^{n-1}_k) \leqslant 2g \dim(S^{n-1}_k /S^{n-2}_k)
$$
and
\begin{equation}\label{findim}
\dim(S^{n}_k /S^{n-1}_k) \leqslant (2g)^{n-1} \dim(S^{1}_k ).
\end{equation}
It  follows that $S^{n}_k /S^{n-1}_k$ is finite dimensional for all orders $n$. We may
also write (\ref{findim}) as
$$
\dim(S^{n}_k ) \leqslant \dim(S^{n-1}_k) + (2g)^{n-1} \dim(S^{1}_k )
$$
so that
$$
\dim(S^{n}_k ) \leqslant \frac{(2g)^n-1}{2g-1} \dim(S^{1}_k )
$$
 and $S^{n}_k$ is also finite dimensional.
Similar arguments appear in \cite[p. 452]{KZ} and \cite[Theorem 2.3]{CDO}.

\vskip 3mm
 In the case where $\G$ has a parabolic element
the following more precise result is demonstrated in \cite{DO'S2}:
\begin{theorem} \label{dim2}
 For $k$ in $2\Z$ and $\G\backslash\H$ non compact with genus $g$ we have
\begin{eqnarray*}
\dim S^2_{k} & = & 0 \text{ \ if \ } k \leqslant 0,\\
\dim S^2_2 & = & \begin{cases} 0 \text{ \ if \ }\dim S_2=0, \\ (2g+1)\dim S_2 -1\text{ \
otherwise,} \end{cases}\\
\dim S^2_{k} & = & (2g+1)\dim S_{k} \text{ \ if \ }k \geqslant 4.
\end{eqnarray*}
\end{theorem}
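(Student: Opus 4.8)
The plan is to determine the image of the injective map $\mathcal P_2\colon S^2_k/S_k\to (S_k)^{2g}$ built in this section, since $S^1_k=S_k$ and $S^0_k=0$ give $\dim S^2_k=\dim S_k+\dim\operatorname{image}(\mathcal P_2)$. First I would reinterpret the target. For $f\in S^2_k$ the map $\g\mapsto f|_k(\g-1)$ takes values in $S_k$, and since every element of $S_k$ is invariant under $|_k\g$ the cocycle identity $f|_k(\g\delta-1)=\bigl(f|_k(\g-1)\bigr)|_k\delta+f|_k(\delta-1)$ collapses to additivity; together with (\ref{trans3}) this exhibits $\g\mapsto f|_k(\g-1)$ as an element of $\operatorname{Hom}_0(\G,S_k):=\operatorname{Hom}_0(\G,\C)\otimes S_k$. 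A homomorphism into a $\C$-vector space that kills the elliptic and parabolic generators of (\ref{gens}) is free on its values at the $2g$ hyperbolic generators $\g_1,\dots,\g_{2g}$, so evaluation there gives $\operatorname{Hom}_0(\G,S_k)\cong(S_k)^{2g}$ and identifies $\mathcal P_2$ with this inclusion. In particular $\dim S^2_k\leqslant(2g+1)\dim S_k$ for every $k$, and if $k\leqslant0$ then $\dim S_k=0$, hence $\operatorname{Hom}_0(\G,S_k)=0$ and $S^2_k=S_k=0$. Everything now reduces to the cokernel of $\mathcal P_2$.

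For $k\geqslant4$ I would show $\mathcal P_2$ is onto. Fix a cusp $\ca$, an integer $m\geqslant1$ and $L\in\operatorname{Hom}_0(\G,\C)$, and apply Theorem \ref{relpoin2} with $\G_0=\G_\ca$ and $\phi=A_\ca^{-1}e^{2\pi i m\cdot}$: the requirement $L|_{\G_0}=0$ is automatic since $\G_\ca$ is generated by a parabolic element, and the integrability hypothesis is readily checked because $|\phi(\sa z)|\,\Im(\sa z)^{k/2}=e^{-2\pi m y}y^{k/2}$ decays exponentially in the cusp while the $\Lambda_L^\pm$ contribute only polynomial growth, so the integral converges for $k\geqslant4$. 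This gives $P[\phi,L]\in S^2_k$, and the substitution $\g\mapsto\g\delta$ in the defining sum, with $L(\g\delta^{-1})=L(\g)-L(\delta)$, yields $P[\phi,L]|_k(\delta-1)=-L(\delta)\,P[\phi]=-L(\delta)\,\Phi_{\text{\rm Par}}(\cdot,m,\ca)$. Thus the image of $\mathcal P_2$, being a subspace, contains $\delta\mapsto L(\delta)\,\Phi_{\text{\rm Par}}(\cdot,m,\ca)$ for all $\ca$, $m$ and $L$; since the $\Phi_{\text{\rm Par}}(\cdot,m,\ca)$ span $S_k$ (section \ref{spn}) and the $L$ span $\operatorname{Hom}_0(\G,\C)$, the image is all of $\operatorname{Hom}_0(\G,S_k)$, and $\dim S^2_k=(2g+1)\dim S_k$.

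The weight $k=2$ case is the main obstacle and is where the $-1$ comes from. The construction above fails because the factor $y^{k/2-2}=y^{-1}$ is not integrable at $y=0$, so the forms $P[A_\ca^{-1}e^{2\pi i m\cdot},L]$ must be built by a Hecke-type regularisation (insert $\Im(\g z)^s$, sum for $\Re(s)$ large, and continue to $s=0$); alternatively, when $L$ is the period homomorphism $\g\mapsto\int_{z_0}^{\g z_0}\psi_1(w)\,dw$ of some $\psi_1\in S_2$, one checks directly that $f(z):=\bigl(\int_{z_0}^z\psi_1(w)\,dw\bigr)\phi(z)$ is holomorphic, decays rapidly at the cusps (an Eichler integral of a cusp form is bounded there), and satisfies $f|_2(\g-1)=L(\g)\phi$, so $f\in S^2_2$. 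The genuine obstruction is cohomological: by Eichler--Shimura $\operatorname{Hom}_0(\G,\C)\cong H^1(X,\C)$ for $X$ the compactification of $\G\backslash\H$, and $S_2\hookrightarrow H^1(X,\C)$ as the holomorphic part, so the cup product $H^1(X,\C)\otimes H^1(X,\C)\to H^2(X,\C)=\C$ restricts to a nonzero linear functional on $\operatorname{Hom}_0(\G,S_2)$ whenever $\dim S_2\geqslant1$ (the part $S_2\cup\overline{S_2}$ being nondegenerate). I would show via Stokes' theorem on $X$ that this functional annihilates $\g\mapsto f|_2(\g-1)$ for every $f\in S^2_2$, and conversely that every class in its kernel is realised by the (regularised) construction above; then $\operatorname{image}(\mathcal P_2)$ has codimension exactly $1$ in $(S_2)^{2g}$ when $\dim S_2\geqslant1$, giving $\dim S^2_2=\dim S_2+(2g\dim S_2-1)=(2g+1)\dim S_2-1$, while if $\dim S_2=0$ the reduction of the first paragraph already forces $S^2_2=0$. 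The hard points are precisely this cup-product vanishing on the image, surjectivity onto its kernel, and the legitimacy of the regularisation in weight $2$; the cases $k\geqslant4$ and $k\leqslant0$ are routine given Theorems \ref{relpoin} and \ref{relpoin2}.
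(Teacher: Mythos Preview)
The paper does not prove Theorem \ref{dim2}: it is stated with the attribution ``the following more precise result is demonstrated in \cite{DO'S2}'' and no argument is given in this paper. So there is nothing here to compare your proposal against directly.

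That said, your treatment of the cases $k\leqslant 0$ and $k\geqslant 4$ is essentially what the paper does establish, just scattered across later sections: injectivity of $\mathcal P_2$ is the lemma following (\ref{pnmap2}), the construction of $P[\phi,L]\in S^2_k$ with the integrability check is Theorem \ref{secondconst}(\ref{aaa1}), the computation $P[\phi,L]|_k(\delta-1)=-L(\delta)P[\phi]$ appears in the proof of Theorem \ref{relpoin2}, and surjectivity of $\mathcal P_2$ for $k\geqslant 4$ is exactly the content of the proof of Proposition \ref{ord2gen}. So for $k\neq 2$ your argument and the paper's machinery coincide, and the paper could have stated the $k\geqslant 4$ dimension formula as a corollary of Proposition \ref{ord2gen} rather than citing it from \cite{DO'S2}.

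For $k=2$ the paper contains nothing, and your sketch is genuinely incomplete in the places you flag. One point worth tightening: your ``alternative'' construction $f(z)=\bigl(\int_{z_0}^z\psi_1\bigr)\phi(z)$ only realises homomorphisms $L$ coming from \emph{holomorphic} periods, which is a $g$-dimensional subspace of the $2g$-dimensional $\operatorname{Hom}_0(\G,\C)$; the antiholomorphic half cannot be reached this way since the resulting $f$ would not be holomorphic. So the regularisation route (or something equivalent) is not optional for the surjectivity-onto-the-kernel step. The cup-product obstruction you describe is the right idea and is indeed how \cite{DO'S2} proceeds, but the verification that it is the \emph{only} obstruction is the substance of that paper and is not a routine consequence of anything here.
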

Diamantis and Sim in \cite{DS} have recently  extended Theorem \ref{dim2} to all higher
orders.
For example, the following formula may be derived from their \cite[Theorem 4.1]{DS}.
Suppose $n \in \N$ and $4 \leqslant k \in 2\Z$. If $g \geqslant 2$ is the genus of non
compact $\GH$ we have
$$
\dim S^n_{k} =  \left \lfloor \frac{G^{n+1}}{2(G-g)(G-1)}\right \rfloor  \cdot \dim S_{k}
$$
when $G=g+\sqrt{g^2-1}$. They also handle the difficult weight $2$ case. Their method of
proof involves constructing non-holomorphic parabolic Poincar\'e series that have the
desired  order $n$ transformation properties and that also depend on a parameter $s \in
\C$. The elements of $S^n_k$ are obtained by meromorphically continuing these series to
$s=0$. The technical details quickly become formidable.

\vskip 3mm
In the next section we show how Petersson's ideas extend smoothly into order $2$. As
described in the final section, we expect these results to generalize to all orders and
help further our understanding of $S_k^n$.

\section{Constructing second-order forms} \label{sof}
\subsection{}
To construct higher-order Poincar\'e series we extend the constructions (\ref{parpoin}),
(\ref{hyppoin}) and (\ref{ellpoin}) by  including a map $L:\G \to \C$ as follows. Recall
the definitions (\ref{supa}), (\ref{supe}),  (\ref{supz}) and set
\begin{eqnarray}
\label{parpoin2}
    \Phi_{\text{\rm Par}}(z,m,\ca;L) & := & \sum_{\g \in \G_\ca \backslash \G}
    L(\g) \left(\left( A_\ca^{-1} e^{2\pi i m \cdot}\right)|_k \g \right) (z),\\
\label{hyppoin2}
    \Phi_{\text{Hyp}}(z,m,\eta;L)
& := & \sum_{\g \in \G_\eta  \backslash \G}
    L(\g) \left(\left( A_\eta^{-1} e^{2\pi i m \cdot}\right)|_k \g \right) (z),\\
\label{ellpoin2}
\Phi_{\text{Ell}}(z,m,z_0;L)
    & := & \sum_{\g \in  \G_{z_0} \backslash \G}
    L(\g) \left(\left( A_{z_0}^{-1} e^{2\pi i m \cdot}\right)|_k \g \right) (z).
\end{eqnarray}
If $L$ is the constant map  $L:\G \to 1$, then we recover the first-order series. Let
$\operatorname{Hom}(\G, \C)$ be the space of homomorphisms from $\G$ to $\C$ and let
$\operatorname{Hom}_0(\G, \C)$ be the subspace of maps that are zero  on all parabolic
elements of $\G$. As we see in this section, for $L \in \operatorname{Hom}_0(\G, \C)$ the
series (\ref{parpoin2}), (\ref{hyppoin2}), (\ref{ellpoin2}) are second-order cusp forms.
(For the hyperbolic series (\ref{hyppoin2}) to be well-defined we also require $L:\G_\eta
\to 0$.)

\vskip 3mm
We need to prove an analog of Theorem \ref{relpoin} on relative Poincar\'e series.
First, for $L \in \operatorname{Hom}_0(\G, \C)$ define $\Lambda_L^+(z)$ and
$\Lambda_L^-(z)$ as follows. By a well-known theorem of Eichler and Shimura there exist
unique $f^+$, $f^-$ in $S_2(\G)$ so that
$$
L(\g) = \int_z^{\g z} f^+(w) \, dw + \overline{\int_z^{\g z} f^-(w) \, dw}.
$$
The right-side above is independent of $z$ and the path of integration in $\H$. Set

\begin{equation}\label{lamb2}
\Lambda_L^+(z):=\int_i^{z} f^+(w) \, dw, \quad \Lambda_L^-(z):=\int_i^{z} f^-(w) \, dw.
\end{equation}
Then clearly, for all $z$ in $\H$,
$$
L(\g)= \Lambda_L^+(\g z) - \Lambda_L^+(z) + \overline{\Lambda_L^-(\g z)} -
\overline{\Lambda_L^-( z)}.
$$

\begin{thm1} 
Let $\G_0$ be a subgroup of $\G$ and  $\phi$  a holomorphic function on $\H$ satisfying
$\phi|_k \g = \phi$ for all $\g$ in $\G_0$. Let $L \in \operatorname{Hom}_0(\G, \C)$ with
$L(\g)=0$ for all $\g$ in $\G_0$. If
\begin{equation}\label{check2}
\int_{\G_0 \backslash \H} \Big(1+|\Lambda_L^+(z)|+ |\Lambda_L^-(z)|\Big)|\phi(z)|y^{k/2}
\, d\mu z < \infty
\end{equation}
then
$$
P[\phi,L](z):= \sum_{\g \in \G_0 \backslash \G} L(\g)(\phi|_k \g)(z)
$$
converges absolutely and uniformly on compact subsets of $\H$ to an element of $S^2_k$.
\end{thm1}
\begin{proof}
 Set
 \begin{eqnarray*}
   P^+[\phi,L](z) &:=& \sum_{\g \in \G_0 \backslash \G} \Lambda_L^+(\g z)(\phi|_k \g)(z),
\\
   P^-[\phi,L](z) &:=& \sum_{\g \in \G_0 \backslash \G} \overline{\Lambda_L^-(\g
z)}(\phi|_k \g)(z)
 \end{eqnarray*}
so that
$$
P[\phi,L](z) = P^+[\phi,L](z) + P^-[\phi,L](z) - \Lambda_L^+(z) P[\phi](z) -
\overline{\Lambda_L^-(z)}P[\phi](z).
$$
Then, using (\ref{check2}) and Theorem \ref{relpoin} with $\phi(z)$ replaced by
$\Lambda_L^+(z)\phi(z)$, we have $P^+[\phi,L] \in S_k$. The series $P^-[\phi,L]$ is not
holomorphic but it does satisfy $P^-[\phi,L]|_k(\g -1)=0$ for all $\g \in \G$ and, using
the proof of Theorem \ref{relpoin} and (\ref{check2}), we obtain
\begin{equation*}
y^{k/2}|P^-[\phi,L](z)| \leqslant \sum_{\g \in \G_0 \backslash \G} \Im(\g z)^{k/2}
|\Lambda_L^-(\g z)\phi(\g z)| < \infty.
\end{equation*}
Now $f \in S_k$ implies $y^{k/2}|f(z)| \ll 1$ uniformly for all $z \in \H$.
Therefore
\begin{eqnarray}
    y^{k/2}|P[\phi,L](z)| & \leqslant & y^{k/2}|P^+[\phi,L](z)| + y^{k/2}|P^-[\phi,L](z)|
\\
    & & \quad + y^{k/2}|\Lambda_L^+(z) P[\phi](z)| + y^{k/2}|\Lambda_L^-(z) P[\phi](z)|
\nonumber\\
& \ll & 1 + |\Lambda_L^+(z)| + |\Lambda_L^-(z)|. \label{cross}
\end{eqnarray}
Let $\F$ be a fixed fundamental domain for $\GH$ intersecting $\R \cup \infty$ at a finite
number of cusps. For each such cusp $\ca$ the scaling matrix $\sa$ maps $z \in \H$ with
$0\leqslant x <1$ and $y$ large into a neighborhood of $\ca$, as in Figure \ref{pfig}.
Since $f^+$, $f^-$ are bounded on $\F$ and have exponential decay at cusps we must have
$|\Lambda_L^+(z)|$, $|\Lambda_L^-(z)| \ll 1$ on $\F$ so that (\ref{cross}) implies
\begin{equation}\label{bndd}
y^{k/2}|P[\phi,L](z)| \ll 1 \text{ \ \ for \ \ }z\in \F.
\end{equation}
It follows that $P[\phi,L](z)$ converges absolutely, and uniformly on all compact sets of
$\H$ to a holomorphic function. It is easy to check that
$$
P[\phi,L]|_k(\g -1) = -L(\g ) P[\phi] \in S_k^1(\G)
$$
for all $\g \in \G$. Since $L \in \operatorname{Hom}_0(\G,\C)$ we also have $
P[\phi,L]|_k(\pi -1) =0$ for all parabolic $\pi \in \G$.

\vskip 3mm
It only remains to check that $P[\phi,L]$ has rapid decay at each cusp.
At any cusp $\ca$ of $\F$ we have a Fourier expansion
\begin{equation}\label{fedex}
    j(\sa,z)^{-k} P[\phi,L](\sa z) = \sum_{m\in \Z} b_\ca(m) e^{2\pi i m z}
\end{equation}
but
$$
|j(\sa,z)^{-k} P[\phi,L](\sa z)| \leqslant y^{-k/2}|\Im(\sa z)^{k/2} P[\phi,L](\sa z)| \ll
y^{-k/2}
$$
for $z$ with $0 \leqslant x<1$ and $y$ large. Consequently $j(\sa,z)^{-k} P[\phi,L](\sa
z) \to 0$ as $y \to \infty$ uniformly for $0 \leqslant x < 1$. Hence we must have
$b_\ca(m)=0$ in (\ref{fedex}) for all $m \leqslant 0$ and $P[\phi,L]$ has rapid decay at
the cusp $\ca$. By the discussion at the end of section \ref{decayexpl} it is enough to
verify the rapid decay condition at the cusps of $\F$.
\end{proof}

\noindent
{\it Remark.} We note that the bound (\ref{bndd}) is not true in general if we allow $z
\in \H$. For example, setting $U(z)=y^{k/2}|P[\phi,L](z)|$ we see that
$$
U(\g^m z) = y^{k/2}|P[\phi,L](z) - m L(\g)P[\phi](z)|
$$
becomes unbounded as $m \to \infty$ if $L(\g)$ and $P[\phi](z)$ are non-zero.

\subsection{} \label{eich}
Now we have everything in place to verify our constructions.

\begin{theorem} \label{secondconst}
For $4 \leqslant k \in 2\Z$ and $L \in \operatorname{Hom}_0(\G, \C)$ we have
\begin{eqnarray}
\Phi_{\text{\rm Par}}(z,m,\ca;L) & \in & S^2_k \text{ \ \ for \ \ }m \in \N \label{aaa1}\\
\Phi_{\text{\rm Hyp}}(z,m,\eta;L) & \in & S^2_k \text{ \ \ for \ \ }m \in \Z, \
L(\g_\eta)=0 \label{aaa2}\\
\Phi_{\text{\rm Ell}}(z,m,z_0;L) & \in & S^2_k \text{ \ \ for \ \ }m \in \N, \ Nm-k/2
\geqslant 0 \label{aaa3}
\end{eqnarray}
where $N$ in (\ref{aaa3}) is $|\G_{z_0}|$, the order of the subgroup of elements fixing $z_0$.
\end{theorem}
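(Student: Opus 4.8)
The three assertions are all applications of Theorem \ref{relpoin2}, so the plan is, for each of the three series, to check that its hypotheses are met. Comparing the definitions (\ref{parpoin2}), (\ref{hyppoin2}), (\ref{ellpoin2}) with $P[\phi,L]$, we take $\G_0$ to be $\G_\ca$, $\G_\eta$, $\G_{z_0}$ and $\phi$ to be $A_\ca^{-1}e^{2\pi i m\cdot}$, $A_\eta^{-1}e^{2\pi i m\cdot}$, $A_{z_0}^{-1}e^{2\pi i m\cdot}$ respectively. The required invariance $\phi|_k\g=\phi$ for $\g\in\G_0$ is exactly the content of the reversed forms of Lemmas \ref{parperiodic}, \ref{hypperiodic}, \ref{ellperiodic}, recorded in (\ref{supa}), (\ref{supe}), (\ref{supz}); in the elliptic case the hypothesis $Nm-k/2\geqslant 0$ with $N=|\G_{z_0}|$ is precisely what makes $(B_{z_0}e^{2\pi i m\cdot})(z)=z^{Nm-k/2}$ holomorphic on $\H$, so that $\phi$ is an honest holomorphic function. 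The condition $L|_{\G_0}=0$ holds in all three cases: $\G_\ca$ is generated by a parabolic element and $L\in\operatorname{Hom}_0(\G,\C)$ vanishes on parabolics; $\G_\eta=\langle\g_\eta\rangle$ and $L(\g_\eta)=0$ is assumed in (\ref{aaa2}) (this is also needed for (\ref{hyppoin2}) to be well defined at all); and $\G_{z_0}$ is finite, so $L(\G_{z_0})$ is a finite subgroup of the torsion-free group $(\C,+)$, hence trivial.

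The substance of the proof is verifying the integrability hypothesis (\ref{check2}) in each case. The contribution of the ``$1$'' in (\ref{check2}) is precisely the integral (\ref{check}) shown finite in Propositions \ref{parinc}, \ref{hypinc}, \ref{ellinc}; moreover those computations exhibit the integrand, after applying the relevant scaling map, as a smooth positive density times $e^{-2\pi m y}y^{k/2-2}\,dy$ on $\fun_\infty$, times $(\sin\theta)^{k/2-2}\,d\theta\,\frac{dr}{r}$ on $\fun_\eta$, and times $r^{Nm-k/2+1}(1-r^2)^{k/2-2}\,dr\,d\theta$ on $\fun_{z_0}$. Since $k>2$ the exponent $k/2-2$ is strictly greater than $-1$, so in each case the density is integrable near $\partial\H$ with a little room to spare. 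To handle the extra factor $|\Lambda_L^+(z)|+|\Lambda_L^-(z)|$ I would establish the elementary a priori bound $|\Lambda_L^\pm(z)|\ll 1+|\Re z|+|\log\Im z|$ on $\H$: from $f^\pm\in S_2(\G)$ one has $|f^\pm(z)|\Im z\ll 1$ (the bound already used in the proof of Theorem \ref{relpoin2}), and integrating $f^\pm$ from $i$ to $z$ along one horizontal and one vertical segment turns this into the stated estimate. Carried through the scaling matrix this becomes purely logarithmic in the boundary-defining coordinate: $\ll 1+|\log y|$ on $\fun_\infty$ (where the exponential factor $e^{-2\pi m y}$, $m\in\N$, also absorbs the growth as $y\to\infty$; alternatively $\Lambda_L^\pm\circ\sa$ is $1$-periodic and tends to a constant at $\ca$ because the cusp period of $f^\pm$ vanishes), $\ll 1+|\log\sin\theta|$ on $\fun_\eta$, and $\ll 1+\log\frac{1}{1-r}$ on $\fun_{z_0}$ (here using that $\sz(\fun_{z_0})$ lies in a bounded region of $\H$, as in (\ref{bndedreg})). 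A single logarithmic factor does not disturb the integrability, since $(\sin\theta)^{k/2-2}|\log\sin\theta|$ and $(1-r^2)^{k/2-2}\log\frac{1}{1-r}$ are still integrable when $k/2-2>-1$. Hence (\ref{check2}) holds and Theorem \ref{relpoin2} gives the three inclusions in $S^2_k$ in the stated ranges of $m$.

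The main obstacle is exactly this last step: controlling the Eichler integrals $\Lambda_L^\pm$ near the boundary of $\H$ and checking that the resulting logarithmic blow-up is swallowed by the slack in the first-order integrals. The subtlest case is the hyperbolic one, where $\se(\fun_\eta)$ may be an unbounded subset of $\H$; there one must verify that $|\Lambda_L^\pm(\se w)|\ll 1+|\log\sin\theta|$ holds uniformly on $\fun_\eta$ --- that $\Re(\se w)$ stays bounded and that $|\log\Im(\se w)|\ll 1+|\log\sin\theta|$ throughout, including near the point of $\overline{\fun_\eta}$ that $\se$ maps to $\infty$. Everything else is a mechanical repetition, via $\sa$, $\se$, $\sz$, of the changes of variable already performed in Propositions \ref{parinc}, \ref{hypinc}, \ref{ellinc}.
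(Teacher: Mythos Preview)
Your overall plan is correct and is exactly the paper's: apply Theorem~\ref{relpoin2} in each case and verify the integrability condition (\ref{check2}), the ``$1$'' piece having already been done in Propositions~\ref{parinc}, \ref{hypinc}, \ref{ellinc}. The parabolic and elliptic cases go through as you describe.

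The gap is in the hyperbolic case, and it is precisely the one you flag but do not resolve. Your global estimate $|\Lambda_L^\pm(z)|\ll 1+|\Re z|+|\log\Im z|$ does not by itself yield $|\Lambda_L^\pm(\se w)|\ll 1+|\log\sin\theta|$ on $\fun_\eta$, because $\Re(\se w)$ need not stay bounded: if the pole $-d/c$ of $\se$ lies in $\overline{\fun_\eta}\cap\R=[-\xi^2,-1]\cup[1,\xi^2]$, then near that boundary point $|\Re(\se w)|$ blows up like $1/\sin\theta$, and the extra factor $(\sin\theta)^{-1}$ against $(\sin\theta)^{k/2-2}$ makes the integral diverge when $k=4$. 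The paper avoids this by conjugating \emph{before} estimating: since $f^\pm(\se w)\,d(\se w)=(f^\pm|_2\se)(w)\,dw$, one has
\[
\Lambda_L^\pm(\se z)=\int_{\se^{-1}i}^{z}(f^\pm|_2\se)(w)\,dw,
\]
and $f^\pm|_2\se\in S_2(\se^{-1}\G\se)$ still satisfies $\Im(w)\,|(f^\pm|_2\se)(w)|\ll 1$. Now Lemma~\ref{logy} applies directly in the $\fun_\eta$ coordinates, where $|\Re z|\leqslant\xi^2$ is bounded, giving $\Lambda_L^\pm(\se z)\ll 1+|\log\Im z|\ll 1+|\log\sin\theta|$ with no reference to $\se(\fun_\eta)$ at all. (Equivalently, you could first exploit the freedom $\se\mapsto\se\left(\begin{smallmatrix}x&0\\0&x^{-1}\end{smallmatrix}\right)$ to push the pole outside $\overline{\fun_\eta}\cap\R$ so that $\se(\fun_\eta)$ is bounded; but the conjugation argument is cleaner and is what the paper does.)
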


The second-order parabolic series $\Phi_{\text{\rm Par}}(z,m,\ca;L)$ appears first, for $m=0$ and $\G=\PSL_2(\Z)$, in section 4 of Eichler's paper 
\cite{Ei}. In fact, he allows $L$ to be any period polynomial (the degree $0$ polynomials correspond to $\operatorname{Hom}_0(\G, \C)$) and uses the second-order parabolic series to prove the existence of general abelian integrals with prescribed periods.  The general result (\ref{aaa1}) is shown, with a different proof to the one given here,  in \cite[Prop. 4.2]{DO'S2}.

\begin{proof}
{\bf The parabolic series.} Beginning with  $\Phi_{\text{\rm Par}}(z,m,\ca;L)$, we proceed
as in Proposition \ref{parinc}. According to Theorem \ref{relpoin2} we need to verify the
condition
\begin{equation}\label{verp}
    \int_0^\infty \int_0^1  \Big(1+|\Lambda_L^+(\sa z)|+ |\Lambda_L^-(\sa z)|\Big)
e^{-2\pi my}y^{k/2-2} \, dx dy < \infty.
\end{equation}
We require a lemma to show the bound
\begin{equation}\label{lambnd}
\Lambda_L^+(\sa z) \ll 1+|\log y|,
\end{equation}
when $|\Re(z)| \leqslant 1$, say, with an implied constant depending only on $L$, $\sa$
and $\G$.

\begin{lemma} \label{logy}
If $z=x+iy$ and $z_0=x_0+i y_0$ are in  $\H$ with $|x-x_0| \leqslant C$ and $f\in S_2(\G)$
then
$$
\int_{z_0}^z f(w)\, dw \ll 1 + |\log y|
$$
with an implied constant independent of $z$ (and depending on $z_0$, $C$, $f$ and $\G$).
\end{lemma}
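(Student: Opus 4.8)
The plan is to reduce everything to the uniform bound $\Im(w)\,|f(w)| \ll 1$ for $f\in S_2(\G)$, together with the fact that, $f$ being holomorphic, the integral $\int_{z_0}^z f(w)\,dw$ does not depend on the path chosen in $\H$. This uniform bound is just the weight-$2$ case of the estimate $y^{k/2}|f(z)|\ll 1$ already used in the proof of Theorem~\ref{relpoin}: the function $\Im(w)|f(w)|$ is $\G$-invariant, continuous on $\GH$, and decays exponentially at each of the finitely many cusps, hence is bounded. So I may write $|f(w)|\leqslant M/\Im(w)$ for a constant $M=M(f,\G)$.

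First I would integrate along the $L$-shaped contour that runs horizontally from $z_0=x_0+iy_0$ to $x+iy_0$ and then vertically from $x+iy_0$ up or down to $z=x+iy$; this contour stays in $\H$, since its imaginary part never drops below $\min(y,y_0)>0$. On the horizontal leg $\Im(w)=y_0$ is constant, so $|f(w)|\leqslant M/y_0$, and the leg has length $|x-x_0|\leqslant C$; this contributes at most $MC/y_0$, a quantity depending only on $f$, $\G$, $C$ and $z_0$. On the vertical leg, writing $w=x+it$ with $t$ ranging between $y_0$ and $y$, one has $|f(w)|\leqslant M/t$, so this leg contributes at most $M\bigl|\int_{y_0}^{y}dt/t\bigr|=M\,|\log y-\log y_0|\leqslant M\,|\log y|+M\,|\log y_0|$. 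Adding the two estimates yields $\int_{z_0}^z f(w)\,dw\ll 1+|\log y|$ with the implied constant depending only on $z_0$, $C$, $f$ and $\G$, as claimed.

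The step carrying the content — and the one I would flag as the crux — is the uniform bound $\Im(w)|f(w)|\ll 1$; granting that, the remainder is just the elementary fact that $1/t$ integrates to a logarithm. The argument also makes transparent the role of the hypothesis $|x-x_0|\leqslant C$: it is precisely what confines the horizontal leg of the contour to bounded length, so that the only place where $1/\Im(w)$ is large is the vertical leg, where it is integrated against $dt$ and produces the harmless $|\log y|$ term; were $x$ allowed to be far from $x_0$ the horizontal leg could contribute an unbounded amount.
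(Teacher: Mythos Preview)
Your proof is correct and follows essentially the same approach as the paper: both use the uniform bound $\Im(w)|f(w)|\ll 1$ and integrate along the same $L$-shaped contour, bounding the horizontal leg by $C/y_0$ and the vertical leg by $|\log y|+|\log y_0|$. Your additional commentary on the role of the hypothesis $|x-x_0|\leqslant C$ and the justification of the uniform bound are helpful elaborations but do not change the argument.
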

\begin{proof}
We know that $y|f(z)|$ is bounded for $z\in \H$ \cite[p. 70]{Iw2} so
\begin{eqnarray*}
  \left| \int_{z_0}^z f(w)\, dw  \right| & \ll & \left| \int_{z_0}^{x+i y_0} \Im(w)^{-1}
\, dw + \int_{x+i y_0}^{x+iy} \Im(w)^{-1} \, dw \right| \\
   & \leqslant & \left| \int_{x_0}^{x} y_0^{-1} \, du \right|+ \left|  \int_{y_0}^{y}
\frac{ dv}{v} \right| \\
   & \leqslant & C/y_0 + |\log y_0| + |\log y|.
\end{eqnarray*}
\end{proof}

Now we have
\begin{equation}\label{l123}
\Lambda_L^+(\sa z) = \int_{\sa^{-1} i}^z f^+(\sa w) \, d \sa w = \int_{\sa^{-1} i}^z g(w)
\, dw
\end{equation}
for $g=f^+|_{\sa} \in S_2(\sa^{-1} \G \sa)$. Apply Lemma \ref{logy} to the right side of
(\ref{l123})  to get (\ref{lambnd}). The same bound applies to $\Lambda_L^+(\sa z)$ and
the left side of (\ref{verp}) becomes
$$
\int_0^\infty   \Big(1+|\log y |\Big) e^{-2\pi my}y^{k/2-2} \, dy.
$$
We leave it to the reader to show that this is bounded for $k>2$ and $m \in \N$.

\vskip 3mm
{\bf The hyperbolic series.} For the  series $\Phi_{\text{\rm Hyp}}(z,m,\eta;L)$ we need
to check that
\begin{equation}\label{verh}
\int_1^\mu \int_0^{\pi} \Big(1+\left|\Lambda_L^+\left(\se
\left(re^{i\theta}\right)\right)\right|+ \left|\Lambda_L^-\left(\se
\left(re^{i\theta}\right)\right)\right|  \Big) r^{-1} e^{-\pi m \theta/\log \xi} (\sin
\theta)^{k/2-2} \,  d\theta dr < \infty
\end{equation}
is satisfied. The bounds
$$
\Lambda_L^+(\se z), \Lambda_L^-(\se z) \ll 1+|\log y|,
$$
with an implied constant depending only on $L$, $\sa$ and $\G$ are proved in the same way
as (\ref{lambnd}). With these the reader may confirm that (\ref{verh}) is true for
$k>2$ and $m \in \Z$.

\vskip 3mm
{\bf The elliptic series.} For the  series $\Phi_{\text{\rm Ell}}(z,m,z_0;L)$ we need
\begin{equation}\label{vere}
\int_0^1 \int_{\pi -\pi/N}^{\pi +\pi/N}
\Big(1+\left|\Lambda_L^+\left(\sz \left(re^{i\theta}\right)\right)\right|+
\left|\Lambda_L^-\left(\sz \left(re^{i\theta}\right)\right)\right|  \Big)
 r^{m+1} \bigl( 1-r^2 \bigr)^{k/2-2} \, d\theta dr < \infty.
\end{equation}
As we saw in (\ref{bndedreg}) and Figure \ref{efig},  $\sz \left(re^{i\theta}\right)$ will
be contained in a bounded region of $\H$. Thus we may apply Lemma \ref{logy} to get
$$
\Lambda_L^+\left(\sz \left(re^{i\theta}\right)\right) \ll 1+\left|\log \Im \left(\sz
\left(re^{i\theta}\right)\right)\right|.
$$
Similarly for $\Lambda_L^-$ and, using (\ref{yyy}), the left side of (\ref{vere}) is
bounded by a finite constant times
\begin{equation}\label{hje}
\int_0^1 \int_{\pi -\pi/N}^{\pi +\pi/N}
\left(1+\left|\log \Big( \frac{1-r^2}{|1-re^{i\theta}|^2} \Big)\right|  \right)
 r^{m+1} \bigl( 1-r^2 \bigr)^{k/2-2} \, d\theta dr .
\end{equation}
We have
$$
\left|\log \Big( \frac{1-r^2}{|1-re^{i\theta}|^2} \Big)\right| \leqslant
\left|\log (1-r^2)\right|
+ 2\left|\log |1-re^{i\theta}| \right|
$$
and since $N \geqslant 2$ we know
$$
\left|\log |1-re^{i\theta}| \right| \leqslant \log 2.
$$
Consequently, (\ref{hje}) is bounded by a constant times
$$
\int_0^1
\left(1+\left|\log (1-r^2)\right|  \right)
 r^{m+1} \bigl( 1-r^2 \bigr)^{k/2-2} \,  dr .
$$
It is straightforward to verify that this is bounded for $k>2$ and $m \in \N_0$.
\end{proof}

\section{Spanning Questions} \label{spn}

\subsection{} \label{spansp1}
If $\G$ contains a parabolic element, or equivalently if $\G\backslash\H$ is non-compact
with a cusp $\ca$, then let
$$
\mathcal S^1_{{\text{\rm Par}}}(\ca) := \big\{ \Phi_{\text{\rm Par}}(z,m,\ca) \ \big| \ m
\in \N \big\}
$$
be the set of all weight $k$ parabolic Poincar\'e series associated to this cusp.
The elements in this set span the weight $k$ cusp forms
\begin{equation}\label{span1}
    S_k=\langle \mathcal S^1_{{\text{\rm Par}}}(\ca) \rangle
\end{equation}
 for the simple reason that any element in a  subspace orthogonal to all of  $\mathcal
S^1_{{\text{\rm Par}}}(\ca)$ must have an identically zero parabolic expansion at $\ca$ by
Proposition \ref{porth}.

\vskip 3mm
If $\G$ does not contain parabolic elements then we cannot construct parabolic Poincar\'e
series.  If $\G$ has an elliptic  fixed point $z_0$ then similar reasoning with
Proposition \ref{eorth} shows that the weight $k$ elliptic Poincar\'e series
$$
\mathcal S^1_{{\text{\rm Ell}}}(z_0) := \big\{ \Phi_{\text{\rm Ell}}(z,m,z_0) \ \big| \ m
\in \N_0 \big\}
$$
span $S_k$.

\vskip 3mm
It is possible that $\G$ does not contain any parabolic or elliptic elements. It must
contain a hyperbolic element though, by \cite[Theorem 2.4.4]{Katok}. If $\eta=\{\eta_1,
\eta_2\}$ is any pair of hyperbolic fixed points then, with   Proposition \ref{horth},
$S_k$ is spanned by the weight $k$ hyperbolic Poincar\'e series
$$
\mathcal S^1_{{\text{\rm Hyp}}}(\eta) := \big\{ \Phi_{\text{\rm Hyp}}(z,m,\eta) \ \big| \ m
\in \Z \big\}.
$$

\subsection{}

We would expect the weight $k$ second-order Poincar\'e series to span the space
$S_k^2/S_k^1$ and this is indeed the case, provided we allow the homomorphisms $L$ to vary
as well as the indices $m$. Similarly to the first-order sets in section \ref{spansp1},
define
\begin{eqnarray*}
  \mathcal S^2_{{\text{\rm Par}}}(\ca) &:=& \big\{ \Phi_{\text{\rm Par}}(z,m,\ca;L) \ \big|
\ m \in \N, \ L \in \operatorname{Hom}_0(\G,\C) \big\} \\
  \mathcal S^2_{{\text{\rm Ell}}}(z_0) &:=& \big\{ \Phi_{\text{\rm Ell}}(z,m,z_0;L) \ \big|
\ m \in \N_0, \ L \in \operatorname{Hom}_0(\G,\C) \big\}.
\end{eqnarray*}
Recall the hyperbolic generators (\ref{gens}) for $\G$. For $1 \leqslant i \leqslant 2g$
define the homomorphisms $L_{\g_i} \in$
 Hom$_0(\G, \C)$ dual to these generators by
$$
L_{\g_i}(\g_j) := \delta_{ij}
$$
and each $L_{\g_i}$ is zero on the other elliptic and parabolic generators. It follows
that $\{ L_{\g_i} : 1 \leqslant i \leqslant 2g\}$ is a basis for  Hom$_0(\G, \C)$.

\begin{prop} \label{ord2gen}
For a cusp $\ca$ of $\G\backslash\H$ or an elliptic fixed point $z_0$,  the sets $\mathcal
S^2_{{\text{\rm Par}}}(\ca)$, $\mathcal S^2_{{\text{\rm Ell}}}(z_0)$ each span
$S_k^2/S_k^1$.
\end{prop}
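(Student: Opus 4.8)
The plan is to reduce the statement to the first-order spanning results of Section \ref{spansp1} by means of the injective linear map
$$
\mathcal P_2 : S_k^2/S_k^1 \longrightarrow (S_k)^{2g}, \qquad f \longmapsto \bigl(f|_k(\g_1-1), \dots, f|_k(\g_{2g}-1)\bigr),
$$
associated in Section \ref{sndord} to the hyperbolic generators of (\ref{gens}). Because $\mathcal P_2$ is one-to-one and all spaces here are finite dimensional, it suffices to prove that $\mathcal P_2$ carries $\langle \mathcal S^2_{\text{\rm Par}}(\ca)\rangle$ (resp.\ $\langle \mathcal S^2_{\text{\rm Ell}}(z_0)\rangle$) onto $\mathcal P_2(S_k^2/S_k^1)$; in fact we will see the image is all of $(S_k)^{2g}$, so that $\mathcal P_2$ is an isomorphism and the spanning sets fill the whole quotient.

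The computational heart of the matter is the coboundary identity $P[\phi,L]|_k(\g-1) = -L(\g)P[\phi]$ established in the proof of Theorem \ref{relpoin2}. For the parabolic series it specializes to
$$
\Phi_{\text{\rm Par}}(\cdot,m,\ca;L)|_k(\g-1) = -L(\g)\,\Phi_{\text{\rm Par}}(\cdot,m,\ca),
$$
and similarly for $\Phi_{\text{\rm Ell}}(\cdot,m,z_0;L)$. Hence
$$
\mathcal P_2\bigl(\Phi_{\text{\rm Par}}(\cdot,m,\ca;L)\bigr) = -\bigl(L(\g_1)\Phi_{\text{\rm Par}}(\cdot,m,\ca), \dots, L(\g_{2g})\Phi_{\text{\rm Par}}(\cdot,m,\ca)\bigr).
$$
Taking $L = L_{\g_i}$, the dual homomorphism with $L_{\g_i}(\g_j)=\delta_{ij}$, this is the tuple with $-\Phi_{\text{\rm Par}}(\cdot,m,\ca)$ in the $i$-th slot and zeros elsewhere. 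Letting $m$ run over $\N$ and invoking (\ref{span1}), the functions $\Phi_{\text{\rm Par}}(\cdot,m,\ca)$ span $S_k$, so these tuples span the $i$-th coordinate subspace $\{0\}\times\cdots\times S_k\times\cdots\times\{0\}$; letting $i$ range over $1,\dots,2g$ shows that the $\mathcal P_2$-images of $\mathcal S^2_{\text{\rm Par}}(\ca)$ span all of $(S_k)^{2g}$. Since each $\Phi_{\text{\rm Par}}(\cdot,m,\ca;L)$ lies in $S_k^2$ by Theorem \ref{secondconst}, its image under $\mathcal P_2$ lies in $\mathcal P_2(S_k^2/S_k^1)$; therefore $\mathcal P_2(S_k^2/S_k^1)=(S_k)^{2g}=\mathcal P_2\bigl(\langle \mathcal S^2_{\text{\rm Par}}(\ca)\rangle\bigr)$, and injectivity of $\mathcal P_2$ yields $\langle \mathcal S^2_{\text{\rm Par}}(\ca)\rangle = S_k^2/S_k^1$. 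The elliptic case is word for word the same, now using that $\Phi_{\text{\rm Ell}}(\cdot,m,z_0;L)\in S_k^2$ by (\ref{aaa3}) and that $\{\Phi_{\text{\rm Ell}}(\cdot,m,z_0)\}$ spans $S_k$ by Section \ref{spansp1}.

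A few routine points should be recorded along the way: every $L\in\operatorname{Hom}_0(\G,\C)$ automatically vanishes on $\G_\ca$ and on $\G_{z_0}$, since these groups are generated by a parabolic, resp.\ elliptic, element and any homomorphism from $\G$ to $\C$ annihilates torsion and (by definition of $\operatorname{Hom}_0$) parabolic elements; hence $\Phi_{\text{\rm Par}}(\cdot,m,\ca;L)$ and $\Phi_{\text{\rm Ell}}(\cdot,m,z_0;L)$ are well defined for all such $L$ with no extra hypothesis (in contrast to the hyperbolic series, which is why $\mathcal S^2_{\text{\rm Hyp}}$ is excluded here). I do not anticipate a genuine obstacle: the only slightly delicate ingredient is the injectivity of $\mathcal P_2$, which is exactly the lemma following Lemma \ref{elliptic} and relies on $L$-type homomorphisms killing parabolic and elliptic elements; everything else is bookkeeping with the identity $P[\phi,L]|_k(\g-1)=-L(\g)P[\phi]$ and the first-order spanning statements.
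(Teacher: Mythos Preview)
Your proof is correct and follows essentially the same route as the paper: both use the injective map $\mathcal P_2$ together with the identity $P[\phi,L]|_k(\g-1)=-L(\g)P[\phi]$ and the first-order spanning results to show that the $\mathcal P_2$-images of the second-order Poincar\'e series fill out $(S_k)^{2g}$. The paper phrases the final step elementwise (given $F\in S_k^2$, subtract a combination of second-order series to land in $\ker\mathcal P_2=S_k^1$), whereas you phrase it as surjectivity of $\mathcal P_2$ plus injectivity; these are the same argument, and your added remark that $L\in\operatorname{Hom}_0(\G,\C)$ automatically vanishes on $\G_\ca$ and $\G_{z_0}$ is a useful clarification the paper leaves implicit.
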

\begin{proof}
Recall the map $\mathcal P_2:S_k^2/S_k^1 \to (S_k^1)^{2g}$ given by (\ref{pnmap2}). A
short calculation shows that
$$
\frac{\Phi_{\text{\rm Par}}(\g_j z,m,\ca;L_{\g_i})}{j(\g_j,z)^k} = \Phi_{\text{\rm
Par}}(z,m,\ca;L_{\g_i}) - L_{\g_i}(\g_j)\Phi_{\text{\rm Par}}(z,m,\ca).
$$
Hence
$$
\mathcal P_2\bigl( \Phi_{\text{\rm Par}}(z,m,\ca;L_{\g_i}) \bigr)=(0,0, \dots
,\underbrace{-\Phi_{\text{\rm Par}}(z,m,\ca)}_{\text{$i$th component}}, \dots,0).
$$
Let $F$ be any element of $S_k^2$ and
$$
\mathcal P_2(F)=(f_1, f_2, \dots , f_{2g}).
$$
Since we know that the $\Phi_{\text{\rm Par}}(z,m,\ca)$ span $S_k$ we may write each $f_i$
as a finite sum
$$
f_i(z)=\sum_m c_i(m) \Phi_{\text{\rm Par}}(z,m,\ca).
$$
Therefore
$$
\mathcal P_2\left(F(z)+\sum_{i=1}^{2g} \sum_m c_i(m) \Phi_{\text{\rm 
Par}}(z,m,\ca;L_{\g_i})\right) =0
$$
from which it follows that
$$
F(z)+\sum_{i=1}^{2g} \sum_m c_i(m) \Phi_{\text{\rm Par}}(z,m,\ca;L_{\g_i}) \in S_k^1
$$
and $S_k^2/S_k^1$ is spanned by $\mathcal S^2_{{\text{\rm Par}}}(\ca)$ as we wanted to
show. The proof for $\mathcal S^2_{{\text{\rm Ell}}}(z_0)$ is identical.
\end{proof}

The hyperbolic version of this result is slightly more involved in that, as well as $L$
and $m$, we also need to vary the hyperbolic points $\eta$. For a simple choice, put
\begin{eqnarray*}
  \mathcal A &:=& \big\{ \Phi_{\text{\rm Hyp}}(z,m,\eta(\g_1);L_{\g_i}) \ \big| \ m \in \Z,
\ 2\leqslant i \leqslant 2g \big\} \\
  \mathcal B &:=& \big\{ \Phi_{\text{\rm Hyp}}(z,m,\eta(\g_2);L_{\g_1}) \ \big| \ m \in \Z
\big\}.
\end{eqnarray*}

\begin{prop}
The set $\mathcal S^2_{{\text{\rm Hyp}}} = \mathcal A \cup \mathcal B$  spans
$S_k^2/S_k^1$.
\end{prop}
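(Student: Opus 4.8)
The plan is to follow the proof of Proposition \ref{ord2gen} almost verbatim, the one genuinely new feature being that we must use \emph{two} hyperbolic pairs rather than one. Write $\eta(\g_1)$ and $\eta(\g_2)$ for the fixed-point pairs of the hyperbolic generators $\g_1,\g_2$ from (\ref{gens}). First I would check that every series in $\mathcal A\cup\mathcal B$ is well defined and lies in $S^2_k$. By Theorem \ref{secondconst}, part (\ref{aaa2}), this requires the twisting homomorphism to vanish on the cyclic stabilizer of the relevant pair. Now $\g_1\in\G_{\eta(\g_1)}$ is a nonzero power of a generator $c$ of that cyclic group, and $L_{\g_i}$ is a $\C$-valued homomorphism, so $L_{\g_i}(\g_1)=\delta_{i1}=0$ for $i\geqslant 2$ forces $L_{\g_i}(c)=0$, hence $L_{\g_i}$ vanishes on all of $\G_{\eta(\g_1)}$; likewise $L_{\g_1}$ vanishes on $\G_{\eta(\g_2)}$. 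Hence $\mathcal A,\mathcal B\subseteq S^2_k$.

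In the proof of Theorem \ref{relpoin2} one has the identity $P[\phi,L]|_k(\g-1)=-L(\g)P[\phi]$; the same direct computation gives
$$
\Phi_{\text{\rm Hyp}}(\cdot,m,\eta;L)\big|_k(\g_j-1)=-L(\g_j)\,\Phi_{\text{\rm Hyp}}(\cdot,m,\eta).
$$
Specializing $L=L_{\g_i}$ and recalling the definition (\ref{pnmap2}) of $\mathcal P_2$, this yields
$$
\mathcal P_2\bigl(\Phi_{\text{\rm Hyp}}(z,m,\eta(\g_1);L_{\g_i})\bigr)=\bigl(0,\dots,\underbrace{-\Phi_{\text{\rm Hyp}}(z,m,\eta(\g_1))}_{i\text{th component}},\dots,0\bigr)
$$
for $2\leqslant i\leqslant 2g$, while $\mathcal P_2\bigl(\Phi_{\text{\rm Hyp}}(z,m,\eta(\g_2);L_{\g_1})\bigr)$ has $-\Phi_{\text{\rm Hyp}}(z,m,\eta(\g_2))$ in its first coordinate and $0$ elsewhere. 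The reason a second pair $\eta(\g_2)$ is needed is precisely that the first coordinate cannot be produced from $\eta(\g_1)$: since $L_{\g_1}$ does not vanish on $\G_{\eta(\g_1)}$, the series $\Phi_{\text{\rm Hyp}}(\cdot,m,\eta(\g_1);L_{\g_1})$ is not even defined.

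Finally, given any $F\in S^2_k$ put $\mathcal P_2(F)=(f_1,\dots,f_{2g})$ with each $f_i\in S_k$. By the spanning statement of section \ref{spansp1} (a consequence of Proposition \ref{horth}), the series $\Phi_{\text{\rm Hyp}}(z,m,\eta(\g_1))$, $m\in\Z$, span the finite-dimensional space $S_k$, so each $f_i$ with $i\geqslant 2$ is a finite sum $f_i=\sum_m c_i(m)\Phi_{\text{\rm Hyp}}(z,m,\eta(\g_1))$; similarly $f_1=\sum_m c_1(m)\Phi_{\text{\rm Hyp}}(z,m,\eta(\g_2))$ using the $\eta(\g_2)$ family. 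Setting
$$
G(z):=\sum_{i=2}^{2g}\sum_m c_i(m)\,\Phi_{\text{\rm Hyp}}(z,m,\eta(\g_1);L_{\g_i})+\sum_m c_1(m)\,\Phi_{\text{\rm Hyp}}(z,m,\eta(\g_2);L_{\g_1}),
$$
the two displays above give $\mathcal P_2(G)=-\mathcal P_2(F)$, hence $\mathcal P_2(F+G)=0$. Since $\mathcal P_2$ is one-to-one, $F+G\in S^1_k$, so $F\equiv-G\pmod{S^1_k}$ is a finite linear combination of elements of $\mathcal A\cup\mathcal B$; therefore $\mathcal S^2_{\text{\rm Hyp}}=\mathcal A\cup\mathcal B$ spans $S^2_k/S^1_k$. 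I expect no real obstacle here: beyond the well-definedness bookkeeping noted above, only the transformation identity for $\Phi_{\text{\rm Hyp}}(\cdot,m,\eta;L)$ has to be verified, and that is a routine analogue of the computation underlying Proposition \ref{ord2gen}.
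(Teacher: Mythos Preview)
Your proposal is correct and follows essentially the same route as the paper's proof: compute $\mathcal P_2$ on the twisted hyperbolic series, use that $\{\Phi_{\text{\rm Hyp}}(\cdot,m,\eta)\}_{m\in\Z}$ spans $S_k$ for each of the two pairs $\eta(\g_1),\eta(\g_2)$, and deduce that $\mathcal A\cup\mathcal B$ hits every image under $\mathcal P_2$. Your explicit verification that $L_{\g_i}$ (for $i\geqslant 2$) vanishes on all of $\G_{\eta(\g_1)}$ --- arguing via $\g_1$ being a nonzero power of the cyclic generator --- is a detail the paper leaves implicit, and your remark on why a second pair is forced is exactly the point.
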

\begin{proof}
With similar reasoning to the proof of Proposition \ref{ord2gen} we see that $\mathcal A$
spans the subspace of $S_k^2/S_k^1$ with image  $(0,f_2,f_3,\dots, f_{2g})$ under
$\mathcal P_2$ and $\mathcal B$ spans the subspace  with image  $(f_1,0,\dots, 0)$, where
the $f_i$ are arbitrary elements of $S_k^1$. Therefore, as in Proposition \ref{ord2gen},
for any $F \in S_k^2$ there exists $G$ in the space spanned by $\mathcal A \cup \mathcal
B$ with $\mathcal P_2 (F+G)=0$. Consequently $F+G \in S_k^1$ and $S_k^2/S_k^1$ is spanned
by $\mathcal A \cup \mathcal B$.
\end{proof}

\vskip 3mm
\noindent
{\it Remark.}  We saw in section \ref{hyplit} that the set
$$
\mathcal T^1_{{\text{\rm Hyp}}} := \big\{ \Phi_{\text{\rm Hyp}}(z,0,\eta(\g)) \ \big| \ \g
\in \operatorname{Hyp}(\G) \big\}
$$
also spans $S_k$.
The obvious second-order analog is
$$
\mathcal T^2_{{\text{\rm Hyp}}} := \big\{ \Phi_{\text{\rm Hyp}}(z,0,\eta(\g);L) \ \big| \
\g \in \operatorname{Hyp}(\G),\ L \in \operatorname{Hom}_{0}(\G,\C), L(\g)=0 \big\}.
$$
We expect that this set spans $S_k^2/S_k^1$ but have been unable to prove it.

\vskip 3mm
The proof that the first-order Poincar\'e series span $S_k$ relies on Petersson's inner
product (\ref{petinn}) and that each type of these series - parabolic, hyperbolic and
elliptic - pick out corresponding expansion coefficients, as we saw in Propositions
\ref{porth}, \ref{horth} and \ref{eorth}. Is there a similar proof that second-order
Poincar\'e series span $S^2_k$? The answer is yes, but the inner product on $S^2_k$ is not
given by Petersson's formula (\ref{petinn}). See \cite{IO'S2} for the details.

\section{Higher-order forms} \label{higher}
We saw in section \ref{sof} that the usual Poincar\'e series in $S^1_k$ could be
generalized to series in $S^2_k$ by including $L \in  \operatorname{Hom}(\G,\C)$ in their
definition. Together these series spanned $S^2_k$. What plays the role of $L$ when we look
to construct third and higher-order forms?

\vskip 3mm
First, we generalize our $|$ notation slightly. For any $L:\G \to \C$ and $\g \in \G$
define $L|\g:=L(\g)$ and extend this linearly to $\C [\SL_2(\R)]$ and $\C [\PSL_2(\R)]$.
Consider the functions $L_1, \ L_2:\G \to \C$ satisfying
\begin{eqnarray*}
  L_1|(\delta_1 -1) &=& 0 \text{ \ \ for all \ \ }\delta_1 \in \G \\
  L_2|(\delta_1 -1)(\delta_2 -1) &=& 0 \text{ \ \ for all \ \ }\delta_1, \delta_2 \in \G
\end{eqnarray*}
and in general, for $n \in \N$, let $\operatorname{Hom}^{[n]}(\G,\C)$ be the space of all
functions $L_n:\G \to \C$ where
$$
L_n \left|\bigl((\delta_1 -1) \cdots (\delta_n -1)\bigr) \right.= 0 \text{ \ \ for all \ \
}\delta_1, \dots, \delta_n \in \G.
$$
Clearly $\operatorname{Hom}^{[1]}(\G,\C)$ is just the space of constant functions, and is
isomorphic to $\C$. The functions $L_2 \in \operatorname{Hom}^{[2]}(\G,\C)$ satisfy
$$
L_2(\delta_1 \delta_2) = L_2(\delta_1) + L_2(\delta_2) - L_2(I)\text{ \ \ for all \ \
}\delta_1, \delta_2 \in \G
$$
and we have $L_2 -L_2(I) \in \operatorname{Hom}(\G,\C)$. Hence
$$
\operatorname{Hom}^{[2]}(\G,\C) \cong \C \oplus \operatorname{Hom}(\G,\C).
$$
Similarly, if we set
$$
\operatorname{Hom}^{[n]}_0(\G,\C):=\left\{ \left. L_n \in \operatorname{Hom}^{[n]}(\G,\C) \
\right|  \ L_n|(\pi -1)= 0 \text{ \ for all parabolic \ }\pi \in \G \right\}
$$
then
$$
\operatorname{Hom}^{[2]}_0(\G,\C) \cong \C \oplus \operatorname{Hom}_0(\G,\C).
$$
Recall we have shown  that the full second-order space is spanned by Poincar\'e series.
For example, with (\ref{span1}) and Proposition \ref{ord2gen} we have
\begin{equation}\label{fullspan}
S^2_k = \left\langle \mathcal S^1_{{\text{\rm Par}}}(\ca) \cup \mathcal S^2_{{\text{\rm
Par}}}(\ca) \right\rangle
\end{equation}
in the parabolic case and similarly for the elliptic and hyperbolic series.
Thus, by (\ref{fullspan}) we have
$$
S^2_k = \left\langle \big\{ \Phi_{\text{\rm Par}}(z,m,\ca;L) \ \big| \ m \in \N, \ L \in
\operatorname{Hom}^{[2]}_0(\G,\C) \big\}  \right\rangle.
$$
Looking towards future work, we expect that, for all $n \in \N$,
$$
S^n_k = \left\langle \big\{ \Phi_{\text{\rm Par}}(z,m,\ca;L) \ \big| \ m \in \N, \ L \in
\operatorname{Hom}^{[n]}_0(\G,\C) \big\}  \right\rangle
$$
for a group with a cusp $\ca$, with similar results for the hyperbolic and elliptic cases.
This approach should parallel that of Diamantis and Sim \cite{DS} and also be valid in the
compact case.

\bibliography{refdata2}

\vskip .1in \noindent
\"Ozlem Imamo$\bar{\text g}$lu, Dept. of Math., ETH Zurich, CH-8092, Zurich, Switzerland.
\newline e-mail: ozlem@math.ethz.ch

\vskip .1in
\noindent
Cormac O'Sullivan, Dept. of Math., Bronx Community College, Univ. Ave and W 181 St.,
Bronx, NY 10453. e-mail: cormac12@juno.com

\end{document}